\documentclass[11pt]{amsart} 
\usepackage[lmargin=1in,rmargin=1in,tmargin=1in,bmargin=1in]{geometry}
\usepackage[ps,all,arc,rotate]{xy}
\usepackage{graphicx, float, epstopdf}
\usepackage{hyperref, color}
\usepackage{centernot}
\usepackage{fancyhdr}
\usepackage{amsfonts,amssymb,amsmath,amsthm}
\usepackage{bm}
\usepackage[usenames,dvipsnames]{xcolor}
\usepackage{datetime}
\numberwithin{equation}{section}
\numberwithin{figure}{section}
\allowdisplaybreaks[4]                        
 
%
\newtheorem{lemma}{Lemma}[section]
\newtheorem{theorem}{Theorem}[section]

\theoremstyle{definition}

\newtheorem{remark}{Remark}[section]
%
\newcommand{\Z}{\mathbb{Z}}

\newcommand{\R}{\mathbb{R}}
\newcommand{\C}{\mathbb{C}}
\newcommand{\N}{\mathbb{N}}
\newcommand{\real}{\operatorname{Re}}

\newcommand{\SL}{\operatorname{SL}}

\begin{document}
\title[On mean values of mollifiers and $L$-functions]{On mean values of mollifiers and $L$-functions \\ associated to primitive cusp forms}
\author{Patrick K\"{u}hn}
\address{Institut f\"{u}r Mathematik, Universit\"{a}t Z\"{u}rich, Winterthurerstrasse 190, CH-8057 Z\"{u}rich, Switzerland}
\email{patrick.kuehn@math.uzh.ch} 
\author{Nicolas Robles}
\address{Department of Mathematics, University of Illinois, 1409 West Green Street, Urbana, IL 61801, United States}
\curraddr{\textsc{Department of Mathematics, Harvard University, 1 Oxford St, Cambridge, MA 02138, United States}}
\email{nirobles@illinois.edu} 
\author{Dirk Zeindler}
\address{Department of Mathematics and Statistics, Lancaster University, Fylde College, Bailrigg, Lancaster LA1 4YF, United Kingdom}
\email{d.zeindler@lancaster.ac.uk}
\thanks{2010 \textit{Mathematics Subject Classification.} Primary: 11M26; Secondary: 11M06, 11N64.\\
\textit{Keywords and phrases.} Dirichlet polynomial, mollifier, zeros on the critical line, ratios conjecture technique, autocorrelation, holomorphic cusp form, modular forms, generalized M\"{o}bius functions.}
\maketitle
\begin{abstract}
We study the second moment of the $L$-function associated to a holomorphic primitive cusp form of even weight perturbed by a new family of mollifiers. This family is a natural extension of the mollifers considered by Conrey and by Bui, Conrey and Young. As an application, we improve the current lower bound on critical zeros of holomorphic primitive cusp forms.
\end{abstract}
%
\section{Introduction}

\subsection{Cusp forms and associated $L$-function}
\label{sec:cusp_form}
Let $\mathbb{H} = \{ x + iy, x \in \R, \; y>0\}$. A modular form of weight $k$ for the congruence subgroup of a square-free integer $N$,
\[
\Gamma_0(N) = \left\{ \left( \begin{array}{*{20}{c}}
a&b \\ 
c&d 
\end{array} \right) \in \SL(2,\Z) \ \bigg| \ c \equiv 0 \; \bmod N \right\},
\]
is a complex valued function $f : \mathbb{H} \to \C$ such that:
\begin{itemize}
	\item $f$ is holomorphic;
	\item $(f|_k \gamma)(z):= (cz + d)^{-k} f(\gamma z) = f(z)$ for each $\gamma \in \Gamma_0(N)$;
	\item $f$ is holomorphic at all cusps of $\Gamma_0(N)$ (meaning that the Fourier series at those cusps is a Taylor series in $q := e^{2 \pi i z}$). The cusps are given by $\gamma(\infty) = \frac{a}{c}$ where
\[
	\gamma = \begin{pmatrix} a & b \\ c & d \end{pmatrix}
\] 
	is an element of $\Gamma_0(N) \setminus \SL(2,\Z)$.
\end{itemize}
Additionally, $f$ is a cusp form if it is a modular form and if it vanishes at all cusps of $\Gamma_0(N)$.\\ 

Let $f$ denote a primitive cusp form of even weight $k$. The Fourier expansion of $f$ at the cusp $\infty$ is given by
\begin{align} \label{fourierexpansion}
f(z) = \sum_{n \ge 1} \lambda_{f}(n)n^{(k-1)/2} e^{2 \pi i n z},
\end{align}
for every complex number $z$ in the upper half-plane $\mathbb{H}$. The arithmetic normalization is $\lambda_f(1)=1$. The Fourier coefficients $\lambda_f(n)$ satisfy the multiplicative relations
\begin{align} \label{recursive1}
\lambda_f(n) \lambda_f(m) = \sum_{\substack{ d|(m,n) \\ (d,N)=1}} \lambda_f \bigg( \frac{mn}{d^2} \bigg)
\quad \textnormal{and} \quad
\lambda_f(mn) = \sum_{\substack{ d|(m,n) \\ (d,N)=1}} \mu(d) \lambda_f \bigg( \frac{m}{d} \bigg)\lambda_f \bigg( \frac{n}{d} \bigg),
\end{align}
for all positive integers $m$ and $n$. Here $\mu(n)$ is the M\"{o}bius function. For $\sigma:=\real(s)>1$, we consider
\begin{align}
L(f,s) 
= 
\sum_{n \ge 1} \frac{\lambda_f(n)}{n^s} 
= 
\prod_{p} \bigg(1 - \frac{\lambda_f(p)}{p^s} + \chi_0(p) \frac{1}{p^{2s}} \bigg)^{-1} = \prod_{p} \bigg(1 - \frac{\alpha_f(p)}{p^s} \bigg)^{-1} \bigg(1 - \frac{\beta_f(p)}{p^s} \bigg)^{-1},
\label{eq:def_L_f}
\end{align}
which is an absolutely convergent and non-vanishing Dirichlet series. In the Euler product $\chi_0$ denotes the trivial character modulo $N$. Here $\alpha_f(p)$, $\beta_f(p)$ are the complex roots of the equation $X^2 - \lambda_f(p)X + \chi_0(p) = 0$ and they are called Satake parameters. The function
\[\Lambda (f,s) = {\bigg( {\frac{{\sqrt N }}{{2\pi }}} \bigg)^s}\Gamma \left( {s + \frac{{k - 1}}{2}} \right)L(f,s) = {L_\infty }(f,s)L(f,s)\]
is called the completed $L$-function of $L(f,s)$. It can be extended to a holomorphic function on $\C$ and it satisfies the functional equation
\[
\Lambda(f,s) = \varepsilon(f) \Lambda(f,1-s),
\]
where $\varepsilon(f) = \pm 1$ is the sign of the $L$-function. The sign is real because the $L$-function is self-dual. We also use in the following pages the following function
\[
\chi_f(s) := \varepsilon(f) \frac{L_{\infty}(f,1-s)}{L_{\infty}(f,s)},
\]
so that $\chi_f(s) L(f,1-s) = L(f,s)$. The duplication formula of $\Gamma(s)$ allows us to write 
\[{L_\infty }(f,s) = {\left( {\frac{{{2^k}}}{{8\pi }}} \right)^{1/2}}{\bigg( {\frac{{\sqrt N }}{{2\pi }}} \bigg)^s}\Gamma \left( {\frac{s}{2} + \frac{{k - 1}}{4}} \right)\Gamma \left( {\frac{s}{2} + \frac{{k + 1}}{4}} \right).\]
It is well-known, following analogies of the Riemann zeta-function, that the non-trivial zeros $\rho_f = \beta_f + i\gamma_f$ of $L(f,s)$ are located inside the critical strip $0 < \beta_f < 1$. 

\subsection{Rankin-Selberg convolution}

The Rankin-Selberg convolution of two $L$-functions coming from primitive cusp forms $f$ and $g$ is the $L$-function defined by 
\[L(f \otimes g,s) = L(\chi,2s)\sum\limits_{n = 1}^\infty  {\frac{{{\lambda _f}(n){\lambda _g}(n)}}{{{n^s}}}}  = \prod\limits_p \prod\limits_{i = 1}^2 \prod\limits_{j = 1}^2 {{{\left( {1 - \frac{{{\alpha _{f,i}}(p){\alpha _{g,j}}(p)}}{{{p^s}}}} \right)}^{ - 1}}} .  \]
This is an $L$-function of degree 4. For each prime $p$, $\alpha_{f,1}(p)$, $\alpha_{f,2}(p)$ and $\alpha_{g,1}(p)$, $\alpha_{g,2}(p)$ are the roots of the quadratic equations
\[
X^2 - \lambda_f(p) X + \chi_q(p) = 0, \quad \textnormal{and} \quad X^2 - \lambda_g(p) X + \chi_N(p) = 0.
\]  
We may also write
\begin{align*}
L(\chi_0,s) & = \prod_p \bigg(1 - \frac{\chi_0(p)}{p^{s}} \bigg)^{-1}  = \prod_{p \centernot| N} \bigg(1 - \frac{1}{p^{s}} \bigg)^{-1} = \prod_{p | N} \bigg(1 - \frac{1}{p^{s}} \bigg) \zeta(s) =: \zeta^{(N)}(s),
\end{align*} 
so that 
\begin{align} \label{rankinselbergsquare}
L(f \otimes f, s) = \zeta^{(N)}(2s) \sum_{n = 1}^{\infty} \frac{\lambda_f^2(n)}{n^s}
\end{align} 
for $\real(s)>1$. For an $L$-function of degree $2$ we have the unconditional bound 
\begin{align} \label{kimsarnak}
|\lambda_f(n)| \le   \tau(n) n^{\bm{\theta}}, \quad \textnormal{where} \quad \bm{\theta}=7/64,
\end{align}
where $\tau(n)$ is the divisor function, which satisfies $\tau(n) \ll n^{\varepsilon}$ for each $\varepsilon > 0$. 
This bound, which currently holds the record, is due to Kim and Sarnak \cite{kimsarnak}. However, we work primarily with primitive cusp forms and for those we have a much stronger bound.
Indeed, it was proven by Deligne \cite{deligne} that
\begin{align} \label{ramanujanhypothesis}
|\lambda_f(n)| \le  \tau(n).
\end{align}
The Ramanujan hypothesis states that \eqref{ramanujanhypothesis} is also true for all $L$-functions of degree 2, but it is proven only for a few cases.
\subsection{The zeros of the $L$-function}
If $N_f(T)$ denotes the number of critical (or non-trivial) zeros of $L(f,s)$ up to height $0 < \gamma_f <T$, then one can show by the argument principle that \cite[$\mathsection$5]{iwanieckowalski}
\[
N_f(T) = \frac{T}{\pi} \log \frac{N T^d}{(2 \pi e)^d} + O(\log \mathfrak{q}(f,iT)),
\]
for $T \ge 1$ and where $d$ denotes the degree of $L$. Here $\mathfrak{q}$ denotes the analytic conductor
\[
\mathfrak{q}(f,s) = N \mathfrak{q}_\infty(s) = N\prod_{j=1}^d (|s+\kappa_j|+3) ,
\]
where $N \ge 1$ is the conductor or level of $L(f,s)$, see \cite[pp. 93-95]{iwanieckowalski}.\\

Lastly, we will need to know a zero-free region \cite[Theorem 5.10]{iwanieckowalski}. Specifically, we know that provided the Rankin-Selberg convolutions $L(f \otimes f,s)$ and $L(f \otimes \bar{f}, s)$ exist with the latter having a simple pole at $s=1$ and the former being entire if $f \ne \bar{f}$, then there exists an absolute constant $c>0$ such that $L(f,s)$ has no zeros in the region
\begin{align} \label{zerofreeinequality}
\sigma \ge 1 - \frac{c}{d^4 \log(N(|t|+3))},
\end{align}
except possibly for one simple real zero $\beta_f < 1$, in which case $f$ is self-dual.
\subsection{Mollifiers}
Let $Q$ be a polynomial with complex coefficients satisfying $Q(0)=1$ and $Q(x)+Q(1-x) = \operatorname{constant}$. 
Set
\begin{align}
V(s) = Q \bigg(-\frac{1}{2 \log T}\frac{d}{ds} \bigg) L(f,s),
\end{align}
where, for large $T$, we set $L= \log T$. Moreover, let $P(x) = \sum_j a_j x^j$ be a polynomial satisfying $P(0)=0$ and $P(1)=1$, and let $M_1 = T^{\nu_1 -\epsilon}$ where 
\begin{align} 
\label{boundonu}
0 < \nu_1 < \frac{1-2\bm{\theta}}{4 + 2\bm{\theta}},
\end{align}
 with $\bm{\theta}$ as in \eqref{kimsarnak}.
For convenience we adopt the notation
\[
P[n] = P \bigg( \frac{\log M/n}{\log M} \bigg)
\]
for $1 \le n \le M$. By convention, we set $P[x]=0$ for $x \ge M$. A mollifier $\psi$ is a Dirichlet polynomial that approximates the function $(L(f,s))^{-1}$ on the critical line. One of the first mollifiers, introduced by Levinson \cite{levinson} and Conrey \cite{conrey83a, conrey89} is (in the context of $L$-functions) 
\begin{align} \label{firstmollifier}
\psi_1(s) = \sum_{h \le M_1} \frac{\mu_{f}(h)  h^{\sigma_0-1/2}}{h^s}P_1[h],
\end{align}
with $P_1(0)=0$ and $P_1(1)=1$ and here $\mu_f(h)$ is given by
\[
\sum_{h=1}^\infty \frac{\mu_f(h)}{h^s} = \frac{1}{L(f,s)}
\]
for $\real(s)>1$ and $\sigma_0 = 1/2 - R/L$. Here $R$ is a bounded positive number to be chosen later and $M_1$ is the length of the mollifier. It is well-known that the main idea behind the choice of $\psi_1(s)$ in \eqref{firstmollifier} is to replicate the behavior of $1/L(f,s)$ in the mean value integral 
\begin{align} \label{meanvalueI}
I=\int_1^T |V \psi(\sigma_0 + it)|^2 dt,
\end{align}
and to minimize the integral in this way. In \cite{bcy}, Bui, Conrey and Young attached a second piece to this mollifier, i.e. they worked with
\[
\psi(s) = \psi_1(s) + \psi_2(s),
\]
where $\psi_1(s)$ is the same as in \eqref{firstmollifier} and $\psi_2(s)$ has the shape
\[
\psi_2(s) = \chi_f(s+1/2-\sigma_0)\sum_{hk \le M_2} \frac{\mu_{f,2}(h)h^{\sigma_0-1/2}k^{1/2-\sigma_0}}{h^s k^{1-s}} P_2[hk],
\]
with $M_2 = T^{\nu_2}$ where $0< \nu_2 \le \nu_1$. In this case $P_2$ is some other polynomial such that $P_2(0)=P_2'(0)=P_2''(0)=0$. The terms $\mu_{f,2}(h)$ are given by the Dirichlet convolution $(\mu_f * \mu_f)(h)$. By convention $\mu_{f,1} = \mu_{f}$. The reasoning behind this choice comes from the formal calculation
\[
\chi _f(s)\sum\limits_{h,k = 1}^\infty  {\frac{{{\mu _{f,2}}(h)}}{{{h^s}{k^{1 - s}}}}}  = \frac{{{\chi _f}(s)L(f,1 - s)}}{{{L^2}(f,s)}} = \frac{1}{L(f,s)}.
\]
This indicates that, up some extent, the second piece $\psi_2(s)$ also replicates the behavior of $1/L(f,s)$. Set $\lambda_f^{*2}(k) = (\lambda_f * \lambda_f)(k)$ and $\mu_{f,3}(h) = (\mu_f * \mu_f * \mu_f)(h)$. With this in mind, we can also claim that 
\[
\psi_3(s) = \chi_f^2(s+1/2-\sigma_0)\sum_{hk \le M_3} \frac{\mu_{f,3}(h) \lambda_f^{*2} (k) h^{\sigma_0-1/2}k^{1/2-\sigma_0}}{h^s k^{1-s}} P_3[hk],
\]
for an appropriate $P_3$, is a suitable mollifier since (formally)
\[
{\chi_f^2}(s)\sum\limits_{h,k = 1}^\infty  \frac{\mu_{f,3}(h)\lambda_f^{*2}(k)}{{{h^s}{k^{1 - s}}}}  = \frac{{{\chi_f^2}(s)L^2(f,1 - s)}}{{{L^3}(f,s)}} = \frac{1}{{L(f,s)}}.
\]
Naturally, this welcomes a higher order generalization. Suppose that $\ell \in \N$. This idea may be extended by taking
\begin{align} \label{generalmollifier}
{\psi _\ell }(s) = \chi _f^{\ell-1} (s + \tfrac{1}{2} - {\sigma _0})\sum\limits_{hk \le {M_\ell }} {\frac{{{\mu _{f,\ell }}(h)\lambda _f^{ * \ell  - 1}(k){h^{{\sigma _0} - 1/2}}{k^{1/2 - {\sigma _0}}}}}{{{h^s}{k^{1 - s}}}}{P_\ell }[hk]},
\end{align}
where $\mu_{f,\ell}(n)$ is given by
\begin{align}
 \frac{1}{L^{\ell}(f,s)} = \sum_{n=1}^{\infty} \frac{\mu_{f,\ell}(n)}{n^{s}} \quad \textnormal{for} \quad \real(s)>1,
 \label{eq:def_mu_f}
\end{align}
and $\lambda_f ^{* k}$ stands for convolving $\lambda_f$ with itself exactly $k$ times; in other words $\lambda_f ^{* k}(n) = (\lambda_f * \cdots * \lambda_f)(n)$. The conditions on $P_\ell$ are
\begin{align} \label{conditiononP}
P_\ell(0) &= 0 \quad \textnormal{and} \quad P_{\ell}(1)=1, \quad \textnormal{when} \quad \ell=1, \nonumber \\
P_\ell(0) &= P_\ell'(0) = P_\ell''(0) = \cdots = P_\ell^{(\ell(\ell-1))}(0) = 0 , \quad \textnormal{when} \quad \ell > 1,
\end{align}
where $P^{(m)}$ denotes the $m$-th derivative of $P$. Moreover $M_\ell = T^{\nu_\ell}$, where $0 < \nu_\ell \le \nu_1$. Another formal calculation shows that indeed one has
\begin{align} \label{formalgeneral}
\chi _f^{\ell  - 1}(s)\sum\limits_{h,k = 1}^\infty  {\frac{{{\mu _{f,\ell }}(h)\lambda _f^{ * \ell  - 1}(k)}}{{{h^s}{k^{1 - s}}}}}  = \frac{{\chi _f^{\ell  - 1}(s){L^{\ell  - 1}}(f,1 - s)}}{{{L^\ell }(f,s)}} = \frac{1}{{L(f,s)}}.
\end{align}
Clearly, when $\ell = 1$ (by the use of \eqref{reductionell1} below) and $\ell=2$, the pieces of Conrey and Levinson and of Bui, Conrey and Young follow as special cases, respectively. Consequently, the mollifier we will be working with is given by
\begin{align} \label{generalsumL}
\psi (s) = \sum\limits_{\ell  = 1}^{\mathcal{L}} \psi _\ell(s) ,
\end{align}
where $\mathcal{L} \in \N$ is of our choice.
The reason behind this choice is due that one may think of $\psi_1(s)$ as the main term of the mollifier and of $\{\psi_\ell(s) \}_{\ell \ge 2}$ as the perturbations to the main piece.
\subsection{Proportions of zeros on the critical line}
In this paper we revise the techniques of \cite{bcy} for a mollifier consisting of several pieces. 
This approach is extremely general.
As an application, we modestly increase the current proportion of critical zeros of $L(f,s)$ and clarify the situation of simple critical zeros. 
Our technique is based on developments by Conrey and Snaith \cite{conreysnaithratios} on ratios conjectures, 
and by Conrey, Farmer and Zirnbauer \cite{conreyfarmerzirnbauer} on autocorrelation of ratios of $L$-functions.\\

Let us define $N_{f,0}(T)$ to be the number of non-trivial zeros of $L(f,s)$ up to height $T>0$ such that $\real(s) = \frac{1}{2}$ and $N_f(T)$ the number of zeros inside the rectangle $0 < \real(s)  <1$ also up to height $T$. We moreover set
\[
\kappa_f = \mathop {\lim \inf }\limits_{T \to \infty } \frac{N_{f,0}(T)}{N_f(T)}.
\]

In 2015, Bernard \cite{bernard} revisited Young's paper \cite{youngsimple} and adapted it to modular forms (see \cite[Proposition 5]{bernard} 
as well as \cite{farmer,hafner1,hafner2,rezvyakova}). In particular, if one applies Littlewood's lemma, and then the arithmetic and geometric mean inequalities, one arrives at
\begin{align} \label{kappaineq}
\kappa_f \ge \mathop {\lim \sup}\limits_{T \to \infty} \bigg(1-\frac{1}{2R}\log \bigg(\frac{1}{T}\int_1^T |V\psi(\sigma_0+it)|^2 dt \bigg) \bigg),
\end{align}
where $\sigma_0 = 1/2 - R/L$ with $R$ a bounded positive number of our choice.\\

For holomorphic primitive cusp forms of even weight, square-free level and trivial character, 
Bernard's results \cite[p. 203]{bernard} are that $\kappa_f \ge 6.93\%$. 
  For this result, Bernard requires the Ramanujan hypothesis ($ \bm{\theta} =0$ in \eqref{kimsarnak}), which is proven in this case.
  If one were to use instead the weaker bound proven by Kim and Sarnak ($ \bm{\theta} =7/64$), one would get only $\kappa_{f} \ge 2.97\%$. 
Unfortunately, as mentioned on \cite[p. 203]{bernard}, the size of the mollifier, even under the Ramanujan hypothesis, 
is too small to establish results for simple zeros on the critical line. Further details can be found in $\mathsection$5.\\

Because of \eqref{generalmollifier} and \eqref{formalgeneral}, 
it is clear that the same mechanism that makes $\psi_2$ be a useful mollifier will also make $\psi_3, \psi_4, \cdots$ useful. Moreover, from \cite[p. 38]{bcy} and \cite[p. 310]{rrz01} we know that
\[
|\psi_2(s)| \ll \sqrt{t} \bigg( \frac{M_2}{t} \bigg)^\sigma L^2, \quad |\psi_3(s)| \ll t \bigg( \frac{M_3}{t^2} \bigg)^\sigma L^4, 
\]
and so on. Therefore, $\log \psi(s)$, where $\psi(s)$ is given by \eqref{generalsumL}, 
is analytic and a valid mollifier that replicates the behavior of $1/L(f,s)$ in a certain region of the complex plane. See \cite[$\mathsection$10]{rrz01} for further details.\\

Unfortunately, the presence of the powers of $\chi$ in \eqref{generalmollifier} decreases the usefulness of the additional pieces as the exponential decay of the pre-factor $\chi$ overwhelms the Dirichlet polynomial. 
Communications with K. Sono \cite{sono}, who has computed the effect of the additional $\psi_\ell$ pieces for the Riemann zeta-function, seem to indicate that the contribution of $\psi_3$ will be smaller than $10^{-4}$.\\

Moreover, we have full control over the additional pieces $\psi_\ell$ via the coefficients of the polynomial inside the Dirichlet series. 
We may thus turn them off or finely calibrate them to suit our needs. Therefore, these additional pieces cannot be harmful.\\

Furthermore, adding just another perturbation (of a different nature) to $\psi_1$ and handling the errors produced by the off-diagonal terms carefully has produced an increment of $0.421\%$ for the case of the Riemann zeta-function, see \cite{pr01}.\\

Finally, it is worth mentioning a situation in which these perturbations are very helpful. Following \cite[p. 215]{farmer}, if we place ourselves in the context of the Riemann zeta-function and conjecturally take $\nu_1 \to 1$ (currently $\nu_1 < \frac{4}{7}$ is the best one can do, \cite{conrey89}), then one obtains that at least $58.65\%$ of non-trivial zeros of $\zeta(s)$ are on the critical line. If we were to add $\psi_2(s)$ and work with $\psi_1(s)+\psi(2)$, in other words with $\mathcal{L}=2$, and conjecturally take $\nu_1, \nu_2 \to 1$ ($\nu_2 < \frac{1}{2}$ is currently the best as proved in \cite{bcy}), then we show in $\mathsection$5 that this figure increases to $60.586\%$.

\subsection{Numerical evaluations}
We will improve Bernard's proportions a little bit by taking $\mathcal{L}=2$ in \eqref{generalsumL}. 
As a consequence of our results we can now establish the following.
\begin{theorem} \label{theoremofkappa}
One has
\begin{equation}
\kappa_f \ge 
\begin{cases}
2.97607\%, & \mbox{unconditionally},  \nonumber \\ 
6.93872\%, & \mbox{under the Ramanujan conjecture}. 
\end{cases}
\end{equation} 
\end{theorem}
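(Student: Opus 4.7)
The starting point is Bernard's inequality \eqref{kappaineq}, which reduces the problem to producing an asymptotic upper bound for the mean value integral
\[
I \;=\; \frac{1}{T}\int_1^T |V\psi(\sigma_0+it)|^2\, dt
\]
with $\psi = \psi_1+\psi_2$ as in \eqref{generalsumL} with $\mathcal{L}=2$. Substituting into \eqref{kappaineq} yields a lower bound for $\kappa_f$ of the form $1 - \frac{1}{2R}\log \mathcal{I}(P_1,P_2,R,\nu_1,\nu_2)$, where $\mathcal{I}$ is the $T \to \infty$ limit of $I$, and the claimed numerical figures $2.97607\%$ and $6.93872\%$ would then follow by numerical optimization in the parameters $R$, $\nu_1$, $\nu_2$ and in the admissible polynomials $P_1,P_2$ satisfying \eqref{conditiononP}.

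The plan is to expand
\[
|V\psi|^2 \;=\; |V\psi_1|^2 \;+\; 2\,\real\bigl(V\psi_1\,\overline{V\psi_2}\bigr) \;+\; |V\psi_2|^2,
\]
and evaluate the three averages
\[
I_{11} = \tfrac{1}{T}\!\int_1^T\! |V\psi_1|^2\, dt,\qquad
I_{12} = \tfrac{1}{T}\!\int_1^T\! V\psi_1\,\overline{V\psi_2}\, dt,\qquad
I_{22} = \tfrac{1}{T}\!\int_1^T\! |V\psi_2|^2\, dt
\]
separately. The piece $I_{11}$ is the mollified second moment treated by Bernard, and it contributes the $6.93\%$ figure on its own. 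For $I_{12}$ and $I_{22}$ I would follow the ratios conjecture/autocorrelation strategy of Conrey--Snaith and Conrey--Farmer--Zirnbauer as implemented in \cite{bcy}: open each mollifier as a Dirichlet polynomial in $h,k$ with the factors $\mu_{f,\ell}$ and $\lambda_f^{*\,\ell-1}$, write $V$ as an operator applied to $L(f,\cdot)$, and use the shifted-moment formula for $L(f,\tfrac{1}{2}+\alpha+it)L(f,\tfrac{1}{2}+\beta-it)$ twisted by $m/n$ (together with the functional equation, which accounts for the $\chi_f$-factors appearing in $\psi_2$). After swapping summation and integration, the diagonal terms yield an arithmetic factor involving Rankin--Selberg $L$-values $L(f\otimes f,\cdot)$ through \eqref{rankinselbergsquare}, and these combine with the polynomial data through contour shifts that ultimately produce explicit multivariable integrals against $P_1, P_2$ and their derivatives.

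The main obstacle will be controlling the off-diagonal contributions to $I_{12}$ and $I_{22}$: the presence of $\chi_f^{j}(s+\tfrac{1}{2}-\sigma_0)$ in $\psi_2$ produces an oscillating phase of size $(t/2\pi)^{1-2s}$ whose saddle point, combined with the total length $M_1M_2$ of the Dirichlet support, drives the admissibility range $M_1 M_2 \le T^{?}$. Using the Deligne bound \eqref{ramanujanhypothesis} one obtains enough cancellation to take $\nu_1,\nu_2$ up to the Ramanujan range; using only the Kim--Sarnak bound \eqref{kimsarnak} with $\bm\theta=7/64$ one must instead respect the restriction \eqref{boundonu}. This is precisely the dichotomy that produces the two numerical values in Theorem \ref{theoremofkappa}.

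Once the three explicit asymptotics are in hand, the proof concludes by a numerical optimization. Parametrizing $P_1,P_2$ by their coefficients (with the vanishing constraints \eqref{conditiononP} imposed), the quantity inside $\limsup$ in \eqref{kappaineq} becomes a smooth function of finitely many real variables, and a Nelder--Mead or gradient-based search (with $R, \nu_1,\nu_2$ included) produces the stated lower bounds $2.97607\%$ and $6.93872\%$ by plugging $\bm\theta = 7/64$ or $\bm\theta=0$ into the admissibility constraint, respectively. The increment over Bernard's $2.97\%$ and $6.93\%$ comes entirely from the cross-term $I_{12}$ and the self-term $I_{22}$, since the $\psi_2$-contribution can always be switched off (by choosing $P_2 \equiv 0$), guaranteeing that the new bound can never be worse than the one with $\psi_1$ alone.
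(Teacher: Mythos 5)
Your proposal takes essentially the same approach as the paper: start from Bernard's inequality \eqref{kappaineq}, expand $|V\psi|^2$ with $\psi=\psi_1+\psi_2$ into the three pieces $I_{11},I_{12},I_{22}$, evaluate the diagonal contributions via the ratios/autocorrelation machinery of Conrey--Snaith and Bui--Conrey--Young (converting arithmetic sums into ratios of $L(f\otimes f,\cdot)$), and then numerically optimize over $P_1,P_2,Q,R,\nu_1,\nu_2$ with the $\bm\theta$-dependent constraint determining the two values. The one substantive imprecision is in your description of the off-diagonal obstacle: in the paper the binding constraint on the mollifier lengths $\nu_\ell$ comes from the \emph{square} terms $I_{\ell,\ell}$ (where the $\chi_f$-factors cancel and one must invoke Blomer's and Bernard's shifted-convolution-sums-on-average estimate, Lemma \ref{bernardblomerlemma}), whereas the cross-term $I_{\ell,\ell+1}$ off-diagonals are handled by the much softer rapid decay of $\widehat{w_0}$ under $\nu_\ell+\nu_{\ell+1}<1$, and the extra $\chi_f$-oscillation you emphasize actually \emph{helps} by killing $I_{\ell,\ell+j}$ for $j\ge 2$ outright (Theorem \ref{cellelljtheorem}). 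Your sketch attributes the length restriction to the $\chi_f$-phase, which inverts the actual difficulty; the restriction really flows from the Kim--Sarnak/Deligne exponent $\bm\theta$ through the shifted convolution bound and the growth of $\lambda_f^{*\ell-1}(k)\ll k^{\bm\theta+\varepsilon}$.
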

The underlying polynomials and optimized value of $R$ can be found in $\mathsection$5.
\subsection{Proof techniques}
The argumentation used in this paper is based on the techniques introduced in \cite{bcy, youngsimple} for the Riemann zeta-function which in turn are borrowed from \cite{conreyfarmerzirnbauer, conreysnaithratios}. 
For this, we split the occurring expressions into the diagonal and off-diagonal contributions. The diagonal contributions can handled by generalizing the results of \cite{bcy}. However, the estimation of the off-diagonal contributions is much more challenging and it cannot be done in the same way as for the zeta-function.
For this we use the pioneering work of Blomer on shifted convolution sums on average in \cite{blomer2}, and its extension by Bernard in \cite[pp. 208-217]{bernard}, see also \cite{blomer1}. The specific details are in $\mathsection$3.\\

An important difference with the Riemann zeta-function is that the lengths $M_\ell$ of the mollifiers are in our setting much shorter.
For Riemann zeta-function one can use $T^{\vartheta}$ with $\vartheta<4/7$ for $M_1$. 
This was an accomplishment of Conrey \cite{conrey89} who used the work of Deshouillers and Iwaniec \cite{deshouillersIwaniec1,deshouillersIwaniec2}, see also \cite{bchb}. For primitive cups forms, we can use only
\begin{align}
 M_1 = T^{\frac{1-2\bm{\theta}}{4+2\bm{\theta}}-\epsilon}
 \quad \textnormal{and} \quad  M_\ell = T^{\frac{1-2\bm{\theta}}{4+6\bm{\theta}}-\epsilon} 
 \quad \text{for} \quad \ell \ge 2
\end{align}
and $\varepsilon>0$ small. As the Ramanujan hypothesis is proven in our situation, we have $\bm{\theta} =0$ and we can use
\begin{align}
M_\ell = T^{\frac{1}{4}-\epsilon} \quad \textnormal{for} \quad \ell \ge 1.
\end{align}
We thus see that the lengths of the mollifiers are much shorter than for the zeta-function and this results in much smaller lower bounds for $\kappa_f$, see $\mathsection$4 for further details.\\

Lastly, as remarked by Farmer \cite[p. 216]{farmer}, our improvements above are consistent with his observations that it is substantially harder to work with $L$-functions of higher degrees. Indeed, the efficiency of the mollifier is severely limited by the range of its length as the degree increases.
\section{Results}
The method sketched in \cite{bcy, rrz01} to deal with multiple piece mollifiers in the mean value integral \eqref{meanvalueI} carries through and our main results are as follows.
\begin{theorem} \label{unsmoothed}
Suppose that $\nu_1 = \frac{1-2\bm{\theta}}{4+2\bm{\theta}} - \varepsilon$ and $\nu_\ell = \frac{1-2\bm{\theta}}{4+6\bm{\theta}} - \varepsilon$ for $\ell\geq2$ and $\varepsilon > 0 $ small. 
Then
\[
\frac{1}{T} \int_1^T |V \psi(\sigma_0 + it)|^2 dt = c(\{P_\ell\}_{\ell=1}^{\mathcal{L}},R,\{\nu_\ell\}_{\ell=1}^{\mathcal{L}}) + o(1),
\]
where
\[
c(\{P_\ell\}_{\ell=1}^{\mathcal{L}},Q,R,\{\nu_\ell\}_{\ell=1}^{\mathcal{L}}) = \sum\limits_{{k_1} + {k_2} +  \cdots  + {k_\mathcal{L}} = 2} \binom{2}{k_1,k_2,\cdots ,k_\mathcal{L}} \prod\limits_{1 \le \ell  \le \mathcal{L}} (c_{\ell,\ell+k_\ell})^{k_\ell} 
\]
and the different $c_{i,j}$ are given by \eqref{cellellformula} and \eqref{cellell1formula}.
\end{theorem}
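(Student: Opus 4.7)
The overall strategy is to expand $|V\psi|^2$ into its $\mathcal{L}^2$ constituent cross-terms, evaluate the mean value of each by the autocorrelation-of-ratios technique of Conrey--Snaith and Conrey--Farmer--Zirnbauer in the form adapted for $L$-functions in \cite{bcy}, and finally reassemble the resulting constants into the multinomial expression of the theorem. Since $\psi=\sum_{\ell=1}^{\mathcal{L}}\psi_\ell$, bilinearity gives
\[
\frac{1}{T}\int_1^T |V\psi(\sigma_0+it)|^2\,dt = \sum_{i,j=1}^{\mathcal{L}} I_{i,j}(T),\qquad I_{i,j}(T):=\frac{1}{T}\int_1^T V\psi_i(\sigma_0+it)\,\overline{V\psi_j(\sigma_0+it)}\,dt,
\]
so that the task reduces to showing $I_{i,j}(T)=c_{i,j}+o(1)$ for every pair $(i,j)$ and then collecting terms.

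For a single $I_{i,j}$ I would write $V(s)=Q(-\frac{1}{2L}\partial_s)L(f,s)$, lift the two $Q$-differentiations to a pair of small contour integrals in auxiliary shift variables $\alpha,\beta$, and reduce matters to the shifted autocorrelation
\[
\mathcal{A}_{i,j}(\alpha,\beta;T) := \frac{1}{T}\int_1^T \psi_i(\sigma_0+it)\,\overline{\psi_j(\sigma_0+it)}\,L(f,\tfrac12+\alpha+it)\,L(f,\tfrac12+\beta-it)\,dt
\]
at shifts of size $O(1/L)$. Inserting the definition \eqref{generalmollifier}, the prefactors $\chi_f^{i-1}$ and $\overline{\chi_f^{j-1}}$ are moved across the $L$-factors via the relation $\chi_f(s)L(f,1-s)=L(f,s)$, and the Dirichlet series in $\mu_{f,\ell}$ and $\lambda_f^{\ast(\ell-1)}$ are opened. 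The Hecke relations \eqref{recursive1} then express the products $\lambda_f(m)\lambda_f(n)$ arising from the two $L$-factors as sums over common divisors, and the $t$-integration produces a diagonal contribution indexed by an equality $am=bn$ together with an off-diagonal remainder indexed by a nonzero additive shift $h=am-bn$.

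The diagonal is handled by standard Mellin--Perron calculus: residues of the rational function in $\alpha,\beta$ manufactured by the diagonal yield the explicit formulas \eqref{cellellformula} and \eqref{cellell1formula} for $c_{\ell,\ell}$ (from the pure square pieces $|V\psi_\ell|^2$) and $c_{\ell,\ell+1}$ (from the neighboring cross terms $V\psi_\ell\overline{V\psi_{\ell+1}}$). The main obstacle is the off-diagonal remainder, which is a shifted convolution sum of the schematic form
\[
\sum_{h\neq 0}\sum_{am-bn=h} \mu_{f,i}(\cdot)\,\lambda_f^{\ast(i-1)}(\cdot)\,\mu_{f,j}(\cdot)\,\lambda_f^{\ast(j-1)}(\cdot)\,W_T(a,b,m,n)
\]
against a smooth weight $W_T$ of scale $\sqrt{T}$. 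To dispose of it I would invoke the averaged shifted-convolution estimate of Blomer \cite{blomer2}, sharpened for primitive holomorphic cusp forms by Bernard in \cite[pp.~208--217]{bernard}. The cancellation it provides is just sufficient to beat the length of $\psi_1$ when $\nu_1<(1-2\bm\theta)/(4+2\bm\theta)$, and because the extra $\lambda_f^{\ast(\ell-1)}$ carried by $\psi_\ell$ with $\ell\geq 2$ inflates the effective mollifier length by a factor $1+2\bm\theta$ in the averaged second moment, the stricter bound $\nu_\ell<(1-2\bm\theta)/(4+6\bm\theta)$ appears precisely as assumed in the hypothesis.

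Once $I_{i,j}=c_{i,j}+o(1)$ is established, the proof closes with combinatorial bookkeeping: grouping pairs $(i,j)$ by multiplicities $(k_1,\ldots,k_\mathcal{L})$, where $k_\ell$ counts the total number of appearances of $\psi_\ell$ across the two factors $V\psi_i$ and $\overline{V\psi_j}$, the sum $\sum_{i,j} c_{i,j}$ reorganizes into $\sum_{k_1+\cdots+k_\mathcal{L}=2}\binom{2}{k_1,\ldots,k_\mathcal{L}}\prod_\ell (c_{\ell,\ell+k_\ell})^{k_\ell}$, the multinomial coefficient absorbing both the $V\psi_i\overline{V\psi_j}$ and $V\psi_j\overline{V\psi_i}$ pairings. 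The vanishing conditions \eqref{conditiononP} on the $P_\ell$ eliminate the non-neighboring cross contributions, so that only the $c_{\ell,\ell}$ and $c_{\ell,\ell+1}$ of the previous step actually appear, matching the constant claimed in the theorem.
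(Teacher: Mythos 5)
Your overall architecture matches the paper's: open the square into $\mathcal{L}^2$ cross-terms, introduce shift parameters $\alpha,\beta$ to absorb the $Q$-differentiations, split each $I_{i,j}(\alpha,\beta)$ into diagonal and off-diagonal parts, evaluate the diagonal by contour and residue calculus against ratios of Rankin--Selberg $L$-functions, and control the off-diagonal via Blomer's averaged shifted-convolution estimate as adapted by Bernard. The identification of the sources of the length constraints $\nu_1<(1-2\bm\theta)/(4+2\bm\theta)$ and $\nu_\ell<(1-2\bm\theta)/(4+6\bm\theta)$ is also essentially correct, though the precise arithmetic of the exponent inflation is not quite the ``factor $1+2\bm\theta$'' you describe (the coefficient of $\nu_\ell$ in the error exponent changes from $2+\bm\theta$ to $2+3\bm\theta$ because each of $\lambda_f^{*\ell-1}(k_1)$ and $\lambda_f^{*\ell-1}(k_2)$ contributes an extra $k^{\bm\theta+\varepsilon}$ under Kim--Sarnak).

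There is, however, a genuine gap in your handling of the non-adjacent cross-terms $I_{\ell,\ell+j}$ with $j\geq 2$. You attribute their vanishing to the conditions \eqref{conditiononP} on the polynomials $P_\ell$, but that is not the mechanism. Those derivative-vanishing conditions at $x=0$ are used to make the residue calculus and the operators $P_\ell^{(\ell(\ell-1))}$ in \eqref{cellellformula} and \eqref{cellell1formula} well-defined; they play no role in suppressing $I_{\ell,\ell+j}$. What actually kills the $j\geq2$ cross-terms is the oscillation of $\chi_f^{j-1}(\tfrac12+it)$ in the integrand $J_{3,f}$: after invoking the functional equation and the Stirling expansion $\chi_f(\tfrac12+it)^{j-1}=F^{j-1}(1+\sum_n b_n t^{-n}+O(t^{-j-1}))$ with $F(t)=(\sqrt{N}t/2\pi e)^{-2it}$, the resulting phase
$\bigl(\tfrac{t\sqrt{N}}{2\pi e}\bigr)^{2(j-1)}\tfrac{k_1h_2 l}{h_1k_2}$
is $\geq T^{\varepsilon_0}$ uniformly in the ranges $h_1k_1\leq M_\ell$, $h_2k_2\leq M_{\ell+j}$ precisely when $\nu_\ell+\nu_{\ell+j}<2(j-1)-\varepsilon$ (see \eqref{keyorthogonal} and \eqref{keyorthogonal2}), and repeated integration by parts against the smooth weight $w$ then gives arbitrary polynomial decay \eqref{keyorthogonal3}. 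It is this size condition on $\nu_\ell+\nu_{\ell+j}$, automatically satisfied since $\nu_\ell\leq1/4$ here, and not the vanishing of $P_\ell$ at the origin, that makes the only surviving main terms $c_{\ell,\ell}$ and $c_{\ell,\ell+1}$.
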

\begin{remark}
The following two points ought to be noted.
\begin{enumerate}
 \item[a)] We need in our computations of the cross-term  $I_{\ell,\ell+1}$ in Theorem~\ref{cellell1theorem} the condition $\nu_\ell + \nu_{\ell+1} <1$.
	   We also need in our computations of the cross-term  $I_{\ell,\ell+j}$ in Theorem~\ref{cellelljtheorem} with $j\geq 2$ the condition $\nu_{\ell} +  \nu_{\ell+j} < 2(j-1)$.
	   As $\bm{\theta}\geq0$, we get $\nu\leq 1/4$ and thus both conditions are automatically fulfilled.
	   However, the mollifiers in this paper can be adapted to the study of the Riemann zeta-function and other $L$-functions.
	   For those other functions, one has to check carefully if these conditions are fulfilled. 
 \item[b)] There is no need to explicitly compute $c_{\ell,\ell+j}$ where $j=2,3,\cdots$ since the contribution of the associated integral 
	   is $O(TL^{-1+\varepsilon})$, see Theorem \ref{cellelljtheorem} below.
\end{enumerate}
\end{remark}
\subsection{The smoothing argument}
The idea of smoothing the mean value integrals was worked out in \cite{bernard,bcy,youngsimple} and it makes the following computations more convenient. Let $w(t)$ be a smooth function satisfying the following properties:
\begin{enumerate}
\item[(a)] $0 \le w(t) \le 1$ for all $t \in \R$,
\item[(b)] $w$ has compact support in $[T/4,2T]$,
\item[(c)] $w^{(j)}(t) \ll_j \Delta^{-j}$, for each $j=0,1,2,\cdots$ and where $\Delta = T/L$.
\end{enumerate}
Note that for the Fourier transform of $w$, we have $\widehat{w}(0)=T/2 +O(T/L)$. This allows us to re-write Theorem \ref{unsmoothed} as follows.
\begin{theorem} \label{smoothed}
Let $\nu_\ell$ for $\ell\geq 1$ be as in Theorem \textnormal{\ref{unsmoothed}}.
For any $w$ satisfying conditions \textnormal{(a)}, \textnormal{(b)} and \textnormal{(c)} and $\sigma_0 = 1/2-R/L$,
\[
\int_{-\infty}^{\infty} w(t)|V \psi(\sigma_0 + it)|^2 dt = c(\{P_\ell\}_{\ell=1}^{\mathcal{L}},Q,R,\{\nu_\ell\}_{\ell=1}^{\mathcal{L}})\widehat{w}(0) + O(T/L),
\]
uniformly for $R \ll 1$, where 
\[
c(\{P_\ell\}_{\ell=1}^{\mathcal{L}},Q,R,\{\nu_\ell\}_{\ell=1}^{\mathcal{L}}) = \sum\limits_{{k_1} + {k_2} +  \cdots  + {k_\mathcal{L}} = 2} \binom{2}{k_1,k_2,\cdots ,k_\mathcal{L}} \prod\limits_{1 \le \ell  \le \mathcal{L}} (c_{\ell,\ell+k_\ell})^{k_\ell}, 
\]
where the different $c_{i,j}$ given by \eqref{cellellformula} and \eqref{cellell1formula}.
\end{theorem}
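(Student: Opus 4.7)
The plan is to reduce the smoothed integral to a finite sum of cross-term integrals, each of which will be evaluated via the asymptotic formulas developed in the subsequent sections. Writing $\psi = \sum_{\ell=1}^{\mathcal{L}}\psi_\ell$ and expanding the modulus squared produces
\[
\int_{-\infty}^{\infty} w(t)\,|V\psi(\sigma_0+it)|^2\,dt = \sum_{\ell_1,\ell_2=1}^{\mathcal{L}} I_{\ell_1,\ell_2}(w),
\]
where
\[
I_{\ell_1,\ell_2}(w) := \int_{-\infty}^{\infty} w(t)\, V(\sigma_0+it)\psi_{\ell_1}(\sigma_0+it)\,\overline{V(\sigma_0+it)\psi_{\ell_2}(\sigma_0+it)}\,dt.
\]
By the conjugate symmetry $I_{\ell_1,\ell_2}(w) = \overline{I_{\ell_2,\ell_1}(w)}$, I only need to handle $\ell_1 \le \ell_2$ and I classify the cross-terms by the gap $j := \ell_2 - \ell_1$.

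For $j = 0$ and $j = 1$, I would follow the Bui--Conrey--Young \cite{bcy} and Conrey--Snaith \cite{conreysnaithratios} recipe adapted to cusp forms. The essential steps are: open $\psi_{\ell_1}$ and $\psi_{\ell_2}$ as Dirichlet sums, use an approximate functional equation inside each $V$ factor, interchange the $t$-integral with the resulting multiple Dirichlet sums, apply Mellin inversion to the $P_\ell[\cdot]$ polynomials, and localise the $t$-integral to a diagonal residue contribution via the rapid decay of $\widehat{w}$. A final contour shift encoding the action of the differential operator $Q(-\tfrac{1}{2L}\partial_s)$ inside $V$ extracts the explicit constants, yielding
\[
I_{\ell,\ell}(w) = c_{\ell,\ell}\,\widehat{w}(0) + O(T/L), \qquad I_{\ell,\ell+1}(w) = c_{\ell,\ell+1}\,\widehat{w}(0) + O(T/L),
\]
with $c_{\ell,\ell}$ and $c_{\ell,\ell+1}$ given by \eqref{cellellformula} and \eqref{cellell1formula}; the second identity is precisely Theorem~\ref{cellell1theorem}. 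For $j \ge 2$, the leftover factor $\chi_f^{j}$ produces oscillation of order $|t|^j$, which combined with the length hypothesis $\nu_\ell + \nu_{\ell+j} < 2(j-1)$ (automatic here since $\nu_\ell \le 1/4$) and repeated integration by parts against $w$ gives $I_{\ell,\ell+j}(w) = O(T L^{-1+\varepsilon})$, exactly Theorem~\ref{cellelljtheorem}. Summing and repackaging via the multinomial expansion of $|\psi|^2$ produces the stated closed form for $c(\{P_\ell\},Q,R,\{\nu_\ell\})$.

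The main obstacle lies inside the diagonal and near-diagonal cross-terms, in the off-diagonal part of the underlying double sum. After applying a Voronoi-type identity for $\lambda_f$, this reduces to an averaged shifted-convolution sum of the shape $\sum_{m-n = h}\lambda_f(m)\lambda_f(n)$, whose treatment in the cusp-form setting is much more delicate than in the $\zeta(s)$ case. I would invoke the averaged estimate of Blomer \cite{blomer2} together with Bernard's refinement \cite[pp.~208--217]{bernard}; its strength is precisely what forces the shorter admissible mollifier length $\nu_\ell \le (1-2\bm{\theta})/(4+6\bm{\theta}) - \varepsilon$ for $\ell \ge 2$, and, coupled with the smoothing by $w$ (whose Fourier transform $\widehat{w}$ decays rapidly off the origin), it is just strong enough to bury the off-diagonal noise inside the $O(T/L)$ error term.
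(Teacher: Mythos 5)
Your proposal follows essentially the same route as the paper: expand $|V\psi|^2$ into the cross-terms $I_{\ell_1,\ell_2}$, exploit conjugate symmetry to reduce to $\ell_1\le\ell_2$, evaluate the diagonal and adjacent cases via the ratios-conjecture/approximate-functional-equation machinery to obtain $c_{\ell,\ell}$ and $c_{\ell,\ell+1}$, kill the $j\ge 2$ gaps by the oscillation of the residual $\chi_f^{\,j-1}$ under the length constraint $\nu_\ell+\nu_{\ell+j}<2(j-1)$, and control the off-diagonal sums through the averaged shifted-convolution estimate of Blomer as refined by Bernard. This matches the paper's reduction of Theorem~\ref{smoothed} to Theorems~\ref{cellelltheorem}, \ref{cellell1theorem} and \ref{cellelljtheorem}, so the argument is correct and not a genuinely different approach.
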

The technique of a multi-piece mollifier was developed in \cite{bcy,feng}. In \cite{rrz01} a 4-piece mollifier was handled. The idea is to open the square in the integrand
\begin{align}
\int |V\psi|^2 &= \int |V\psi_1|^2 + \int |V|^2\psi_1 \overline{\psi_2} + \int |V|^2\overline{\psi_1} \psi_2 + \cdots + \int |V\psi_{\mathcal{L}}|^2 \nonumber \\
&=\sum_{1 \le \ell \le \mathcal{L}} \{ I_{\ell,\ell} + I_{\ell,\ell+1} + \overline{I_{\ell+1,\ell}} + I_{\ell,\ell+j} + \overline{I_{\ell+j,\ell}} \}, \nonumber
\end{align}
for $j=2,3,4,\cdots$. We will compute these integrals in the next sections. The integral $I_{\ell,\ell+1}$ is asymptotically real, thus $I_{\ell+1,\ell}$ follows from $I_{\ell,\ell+1}$, i.e. $I_{\ell,\ell+1} \sim \overline{I_{\ell+1,\ell}}$.
\subsection{The main terms}
The main terms coming from integrals $I_{\ell,\ell}$, $I_{\ell,\ell+1}$ and $I_{\ell,\ell+j}$ where $j=2,3,4,\cdots$ are now stated as theorems.
\begin{theorem} \label{cellelltheorem}
Let $\ell \in \N$. 
Let $\nu_\ell$ for $\ell\geq 1$ be as in Theorem \textnormal{\ref{unsmoothed}}. Then we have for $\ell\geq 1$
\[
\int_{-\infty}^{\infty} w(t)|V \psi_{\ell}(\sigma_0+it)|^2 dt \sim c_{\ell,\ell}(P_\ell,Q,R,\nu_\ell)\widehat{w}(0) + O(T L^{-1+\varepsilon})
\]
uniformly for $R \ll 1$, where
\begin{align} \label{cellellformula}
  c_{\ell ,\ell } &= \frac{1}{\Gamma^2(\ell-1)}\frac{{{2^{2\ell (\ell  - 1)}}}}{{({\ell ^2} + {{(\ell  - 1)}^2} - 1)!}}\frac{{{d^{2\ell }}}}{{d{x^\ell }d{y^\ell }}} \nonumber \\
   &\quad \times \bigg[\int_0^1 {\int_0^1 {\int_0^1 {\int_0^1 {\left( {\frac{2}{{{\nu _\ell }}} + (x + y - v(y + r) - u(x + r))} \right)} } } } {(1 - r)^{{\ell ^2} + {{(\ell  - 1)}^2} - 1}} \nonumber \\
   &\quad \times {e^{ - \frac{\nu_\ell}{2}R[x + y - v(y + r) - u(x + r)]}}{e^{2Rt[1 + \frac{\nu_\ell}{2}(x + y - v(y + r) - u(x + r))]}} \nonumber \\
   &\quad \times Q\bigg(\frac{\nu _\ell }{2}( - x + v(y + r)) + t\bigg(1 + \frac{\nu _\ell }{2}(x + y - v(y + r) - u(x + r))\bigg)\bigg) \nonumber \\
   &\quad \times Q\bigg(\frac{\nu _\ell }{2}( - y + u(x + r)) + t\bigg(1 + \frac{\nu _\ell }{2}(x + y - v(y + r) - u(x + r))\bigg)\bigg) \nonumber \\
   &\quad \times {(x + r)^{\ell  - 1}}{(y + r)^{\ell  - 1}}{u^{\ell  - 2}}{v^{\ell  - 2}} \nonumber \\
   &\quad \times P_\ell ^{(\ell (\ell  - 1))}((1 - u)(x + r))P_\ell ^{(\ell (\ell  - 1))}((1 - v)(y + r))dtdrdudv \bigg]_{x = y = 0}. 
\end{align}
\end{theorem}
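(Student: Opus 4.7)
The plan is to follow the Bui--Conrey--Young ratios-conjecture strategy, adapted to primitive cusp-form $L$-functions. First, I note that at $s=\sigma_0+it$ one has $s+\tfrac12-\sigma_0=\tfrac12+it$; the self-duality of $f$ together with $|\varepsilon(f)|=1$ then gives $|\chi_f(\tfrac12+it)|=1$, so the exterior $\chi_f^{\ell-1}$ in $\psi_\ell$ pairs with its conjugate from $\overline{\psi_\ell}$ to contribute $1$ (up to a negligible off-critical-line correction). This reduces the problem to evaluating
\[
\int_{-\infty}^{\infty} w(t)\,|V(\sigma_0+it)|^2\,\widetilde\psi_\ell(\sigma_0+it)\,\overline{\widetilde\psi_\ell(\sigma_0+it)}\,dt,
\]
where $\widetilde\psi_\ell(s):=\psi_\ell(s)/\chi_f^{\ell-1}(s+\tfrac12-\sigma_0)$ is the pure Dirichlet-polynomial part of $\psi_\ell$.

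Next, I linearise the $Q$-differential operator by introducing complex shifts $\alpha,\beta$ and studying the shifted second moment
\[
J_\ell(\alpha,\beta):=\int_{-\infty}^{\infty} w(t)\,L(f,\tfrac12+\alpha+it)\,L(f,\tfrac12+\beta-it)\,\widetilde\psi_\ell(\sigma_0+it)\,\overline{\widetilde\psi_\ell(\sigma_0+it)}\,dt,
\]
from which the target integral is recovered by applying $Q(-\tfrac{1}{2L}\partial_\alpha)\,Q(-\tfrac{1}{2L}\partial_\beta)$ and setting $\alpha=\beta=-R/L$. I then expand $\widetilde\psi_\ell\,\overline{\widetilde\psi_\ell}$ as a fourfold Dirichlet sum in $(h,k,h',k')$ with weights $\mu_{f,\ell}(h)\mu_{f,\ell}(h')\lambda_f^{*\ell-1}(k)\lambda_f^{*\ell-1}(k')$ and represent the cutoffs $P_\ell[hk]$, $P_\ell[h'k']$ by vertical Mellin integrals in variables $u$ and $v$; the vanishing conditions \eqref{conditiononP} force denominators $u^{\ell(\ell-1)+1}$ and $v^{\ell(\ell-1)+1}$, which is ultimately what produces the derivatives $P_\ell^{(\ell(\ell-1))}$ in the main term.

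I now invoke the shifted-moment asymptotic for $L(f,\tfrac12+\alpha+it)\,L(f,\tfrac12+\beta-it)$ against a Dirichlet polynomial. The $(h,k,h',k')$ Euler products, after interchanging with the $u,v$ integrals and the $t$-integral, assemble into a meromorphic expression of shape
\[
\frac{L(f\otimes f,\,1+\alpha+\beta+\cdots)\,\bigl(L(f,\cdots)\bigr)^{\ell-1}\bigl(L(f,\cdots)\bigr)^{\ell-1}}{\zeta^{(N)}(\cdots)\,\bigl(L(f,\cdots)\bigr)^{\ell}\bigl(L(f,\cdots)\bigr)^{\ell}}\cdot A(\alpha,\beta,u,v),
\]
with $A$ a holomorphic Euler-product correction. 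At the origin, the simple pole of $L(f\otimes f,\cdot)$ at $s=1$ couples with the $\ell^2$-fold vanishing from the two $L^{-\ell}$ denominators and the $(\ell-1)^2$-fold vanishing from the two $L^{\ell-1}$ numerators to produce a total pole of order $\ell^2+(\ell-1)^2$. The swap contribution produced by the functional equation, together with the off-diagonal pieces coming from unbalanced $(h,k)$ versus $(h',k')$ sums, is controlled by Blomer's shifted convolution sum estimate on average \cite{blomer2} and Bernard's extension thereof \cite{bernard}. This is precisely where the restriction $\nu_\ell\leq(1-2\bm\theta)/(4+6\bm\theta)-\varepsilon$ is consumed, and it yields an acceptable error $O(TL^{-1+\varepsilon})$.

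To extract the main term I shift the $u,v$ and auxiliary contours toward the origin and compute the nested residues of total order $\ell^2+(\ell-1)^2$ via Cauchy's formula. The factor $(1-r)^{\ell^2+(\ell-1)^2-1}$ arises from rewriting the top-order residue as a one-dimensional integral in a combined shift $r$ (with the factorial $(\ell^2+(\ell-1)^2-1)!$ as the normalising constant); the factors $(x+r)^{\ell-1}(y+r)^{\ell-1}$ record the separation between the $L^{\ell-1}$ zeros and the $L^\ell$ poles across the $(h,h')$-type shifts; and $u^{\ell-2}v^{\ell-2}\,P_\ell^{(\ell(\ell-1))}((1-u)(x+r))\,P_\ell^{(\ell(\ell-1))}((1-v)(y+r))$ reproduces the Cauchy representation of the Mellin transforms of $P_\ell[hk]$ and $P_\ell[h'k']$ after contraction. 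Finally I rescale $\alpha=-x/L,\ \beta=-y/L$, convert $Q(-\tfrac{1}{2L}\partial_\alpha)\,Q(-\tfrac{1}{2L}\partial_\beta)$ into $\frac{d^{2\ell}}{dx^\ell\,dy^\ell}$ at $x=y=0$ via Taylor expansion, and absorb $\log M_\ell=\nu_\ell L$ into the exponentials $e^{-\tfrac{\nu_\ell}{2}R[\cdots]}$ and $e^{2Rt[1+\tfrac{\nu_\ell}{2}(\cdots)]}$, which together with the two $Q$-factors yields the displayed formula for $c_{\ell,\ell}$. The hardest single step will be the uniform estimation of the off-diagonal shifted convolution sums; the most delicate bookkeeping step is tracking pole orders so that the precise prefactor $2^{2\ell(\ell-1)}/\bigl(\Gamma^2(\ell-1)\,(\ell^2+(\ell-1)^2-1)!\bigr)$ emerges unambiguously from the residue calculation.
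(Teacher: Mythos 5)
Your high-level plan matches the paper's: expand the mollifier square, introduce shifts $\alpha,\beta$, apply the approximate functional equation, push off-diagonal terms into an error via Blomer's shifted-convolution estimate (and Bernard's adaptation of it), Mellin-represent $P_\ell$, evaluate the diagonal Euler product as a ratio of $L$-functions, compute nested residues, and finish with the $Q$-differential operators. However, two of the concrete steps in your sketch are wrong in ways that would derail the computation if followed literally.

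First, the Euler product you write is incorrect. After collecting the diagonal terms, the sum
\[
\sum_{h_2k_1n=h_1k_2m}\frac{\mu_{f,\ell}(h_1)\mu_{f,\ell}(h_2)\lambda_f^{*\ell-1}(k_1)\lambda_f^{*\ell-1}(k_2)\lambda_f(m)\lambda_f(n)}{h_1^{1/2+s}h_2^{1/2+u}k_1^{1/2+s}k_2^{1/2+u}m^{1/2+\alpha+z}n^{1/2+\beta+z}}
\]
factors, to leading order at each prime, into a product of powers of the \emph{Rankin--Selberg} $L$-function $L(f\otimes f,\cdot)$ only, because the diagonal constraint forces $\lambda_f(p)^2$-type coefficients. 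Your expression has a single $L(f\otimes f,\cdot)$ factor and then several copies of $L(f,\cdots)$; but $L(f,s)$ is entire and has no pole at $s=1$, so the pole/zero bookkeeping that produces the order-$(\ell^2+(\ell-1)^2-1)$ residue would simply not exist in your formulation. In the paper the correct shape is $L^{\ell^2+(\ell-1)^2}(f\otimes f,1+s+u)\,L(f\otimes f,1+\alpha+\beta+2z)$ in the numerator, with $L^{\ell(\ell-1)}(f\otimes f,1+2s)$, $L^{\ell(\ell-1)}(f\otimes f,1+2u)$, $L^{\ell}(f\otimes f,1+\beta+s+z)$, $L^{\ell}(f\otimes f,1+\alpha+u+z)$ in the denominator, and $L^{\ell-1}(f\otimes f,1+\alpha+s+z)$, $L^{\ell-1}(f\otimes f,1+\beta+u+z)$ as further numerator factors; there is no bare $\zeta^{(N)}$ in the denominator at this stage — the $\zeta^{(N)}$ appears only later when one unfolds $L(f\otimes f,s)=\zeta^{(N)}(2s)\sum_n\lambda_f^2(n)/n^s$ to reduce to the arithmetic sum that the Euler--Maclaurin lemma handles. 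Your description of the pole order as arising from a single pole of $L(f\otimes f,\cdot)$ ``coupling'' with vanishings of order $\ell^2$ and $(\ell-1)^2$ also doesn't match: the exponent $\ell^2+(\ell-1)^2$ is the exponent of $L(f\otimes f,1+s+u)$ itself, obtained from the linear-in-$p$ terms $\mu_{f,\ell}(p)=-\ell\lambda_f(p)$ and $\lambda_f^{*\ell-1}(p)=(\ell-1)\lambda_f(p)$ in the Euler product, and it is converted into a factor $(1-r)^{\ell^2+(\ell-1)^2-1}$ via \eqref{eq:Lemma8_ugly23} applied to the $\lambda_f^2$-convolution sum.

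Second, the variables $x,y$ appearing in \eqref{cellellformula} are \emph{not} a rescaling of $\alpha,\beta$, and the operator $\tfrac{d^{2\ell}}{dx^\ell\,dy^\ell}$ is not a ``Taylor conversion'' of $Q(-\tfrac{1}{2L}\partial_\alpha)\,Q(-\tfrac{1}{2L}\partial_\beta)$. In the paper, $x$ and $y$ enter through the identity $(\beta+s)^\ell=\tfrac{d^\ell}{dx^\ell}e^{(\beta+s)x}\big|_{x=0}$ used to linearize the numerator $(\beta+s)^\ell/(\alpha+s)^{\ell-1}$ (and its $u$-counterpart) that survives the residue at $s=u=0$; the $1/\Gamma(\ell-1)$ together with the $u^{\ell-2}$ factor then comes from the Euler-integral representation of the confluent hypergeometric function ${}_1F_1(\ell-1,i+2\ell-\ell^2,\cdot)$ that this residue produces. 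The $Q$-operators, by contrast, act on $\alpha$ and $\beta$ at the very end and are responsible for the $Q(\cdots)$ factors and the $e^{2Rt[\cdots]}$ and $e^{-\tfrac{\nu_\ell}{2}R[\cdots]}$ exponentials, after setting $\alpha=\beta=-R/L$. Conflating these two mechanisms would leave you unable to reproduce either the $Q$-arguments or the $u^{\ell-2}v^{\ell-2}$ weights in \eqref{cellellformula}.
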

\begin{remark}
The case $\ell=1$ has to be handled with a certain amount of care as it superficially seems divergent due to the presence of $u^{\ell-2}$ and $v^{\ell-2}$ in the integrands. This is taken care of by $\frac{1}{\Gamma^2(\ell-1)}$ in the denominator of the first line. By the use of \eqref{limitingconrey} below, the term $\frac{1}{\Gamma^2(\ell-1)}$ cancels out the integrals with respect to $u$ and with respect to $v$, leaving us with
\[
c_{1,1}(P,Q,R,\nu_1) = 1 + \frac{1}{\nu_1} \int_0^1 \int_0^1 e^{2Rv} \bigg[ \frac{d}{dx} (e^{R \nu x} Q(v+\nu x)P(x + u))|_{x=0} \bigg]^2 du dv,
\] 
which is precisely the term recovered by Conrey \cite{conrey89} and Bernard \cite{bernard}, see also \eqref{specialell1} and \cite[p. 544]{youngsimple}. 
\end{remark}
\begin{theorem} \label{cellell1theorem}
Let $\ell \in \N$ and $\nu_\ell$ be as in Theorem \textnormal{\ref{unsmoothed}}. Then
\[
\int_{-\infty}^{\infty} w(t)V \psi_{\ell} \overline{\psi_{\ell+1}}(\sigma_0+it) dt \sim c_{\ell,\ell+1}(P_\ell,P_{\ell+1},Q,R,\nu_{\ell},\nu_{\ell+1})\widehat{w}(0) + O(T L^{-1+\varepsilon})
\]
uniformly for $R \ll 1$, where
\begin{align} \label{cellell1formula}
  c_{\ell ,\ell  + 1} &= \frac{{{2^{2{\ell ^2}}}}}{{(2{\ell ^2} - 1)!}}{\left( {\frac{{{\nu _{\ell  + 1}}}}{{{\nu _\ell }}}} \right)^{\ell (\ell  + 1)}}{e^R}\frac{{{d^{2\ell }}}}{{d{x^\ell }d{y^\ell }}} \bigg[  \iint_{\substack{a + b \le 1 \\ a, b \ge 0}} {\int_0^1 {{u^{2\ell }}{{(1 - u)}^{2{\ell ^2} - 1}}{e^{R[\frac{\nu _\ell}{2}(y - x) + u\frac{\nu_{\ell  + 1}}{2}(a - b)]}}} } \nonumber \\
   &\quad \times Q\bigg( \frac{- x{\nu _\ell } + au{\nu _{\ell  + 1}}}{2}\bigg)Q\bigg(1+\frac{ y{\nu _\ell } - bu{\nu _{\ell  + 1}}}{2}\bigg) \nonumber \\
   &\quad \times P_\ell ^{(\ell (\ell  - 1))}\left( {x + y + 1 - (1 - u)\frac{{{\nu _{\ell  + 1}}}}{{{\nu _\ell }}}} \right)P_{\ell  + 1}^{(\ell (\ell  + 1))}((1 - a - b)u){(ab)^{\ell  - 1}}dudadb \bigg]_{x = y = 0}.  
\end{align}
\end{theorem}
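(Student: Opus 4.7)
The plan is to follow the ratios/autocorrelation recipe of Conrey--Farmer--Zirnbauer and Conrey--Snaith, made rigorous in the style of \cite{bcy} and adapted to modular forms as in \cite{bernard}. First, I would expand the differential operator defining $V$ as a double contour integral about the origin, so that
\[
V(s)\,\overline{V(s)}
= \frac{1}{(2\pi i)^2}\oint\oint Q(\alpha)\,Q(\beta)\, L\bigl(f,s+\tfrac{\alpha}{L}\bigr)\,\overline{L\bigl(f,s+\tfrac{\beta}{L}\bigr)}\,\frac{d\alpha\,d\beta}{\alpha^{d_Q+1}\beta^{d_Q+1}}.
\]
Opening $\psi_\ell$ and $\overline{\psi_{\ell+1}}$ as Dirichlet polynomials in $(h_1,k_1)$ and $(h_2,k_2)$, the key simplification is that on the critical line $|\chi_f(1/2+it)|=1$, so the product $\chi_f^{\ell-1}\,\overline{\chi_f^{\ell}}$ collapses to a single $\chi_f^{-1}$; the functional equation $\chi_f^{-1}(s)L(f,s)=L(f,1-s)$ then converts the conjugated factor back to an unconjugated $L$ with a shifted argument. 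The $t$-integral reduces to a Mellin/Fourier transform of $w$ evaluated at suitable frequencies, decoupling the arithmetic from the analytic data.

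Next I would expand $L(f,s+\alpha/L)L(f,s+\beta/L)$ as a double Dirichlet series in $(m,n)$ with coefficients $\lambda_f(m)\lambda_f(n)$ and split into diagonal ($m=n$) and off-diagonal pieces. The diagonal, after summation over $n$, yields a multiplicative generating series whose pole structure is governed by \eqref{rankinselbergsquare}: the combined multiplicity of the $\mu_{f,\ell}$, $\mu_{f,\ell+1}$, $\lambda_f^{*(\ell-1)}$ and $\lambda_f^{*\ell}$ convolutions yields a pole of order $2\ell$ at $\alpha=\beta=0$. Computing the residue via the operator $d^{2\ell}/(dx^\ell\,dy^\ell)$, then performing Mellin inversion of $P_\ell[h_1k_1]$ and $P_{\ell+1}[h_2k_2]$ over the simplex $\{a+b\le1,\,a,b\ge0\}$ together with the convolution variable $u\in[0,1]$, produces exactly the integral in \eqref{cellell1formula}. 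The prefactor $(\nu_{\ell+1}/\nu_\ell)^{\ell(\ell+1)}$ is the rescaling between $\log M_\ell$ and $\log M_{\ell+1}$ inside the two polynomials, $e^R$ arises from $\sigma_0=1/2-R/L$, and $(2\ell^2-1)!$ is a Beta-type normalization of the simplex.

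The hardest step, and the main obstacle, is controlling the off-diagonal $m\ne n$ contribution. Here the techniques used for $\zeta(s)$ in \cite{bcy} break down, and one must invoke Blomer's shifted convolution sums on average \cite{blomer2}, in the refined form of Bernard \cite[pp.~208--217]{bernard}. After bounding the $h$-sums by $\tau_\ell(h_1)\tau_{\ell+1}(h_2)$ via the convolution identity defining $\mu_{f,\ell+1}$, and peeling off the $k$-variables, the required input is an on-average estimate for
\[
\sum_{r\ne 0}\,\sum_{m-n=r}\lambda_f(m)\,\lambda_f(n)\,W\!\bigl(\tfrac{m}{X},\tfrac{n}{X}\bigr)
\]
that saves a small power of $T$ uniformly in $r$. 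The length constraints $h_1k_1\le M_\ell$ and $h_2k_2\le M_{\ell+1}$ bound the effective support, and the hypothesis $\nu_\ell+\nu_{\ell+1}<1$ of Remark~2.1(a), which is automatic here since $\nu\le1/4$, is exactly what allows the off-diagonal mass to be absorbed into the error $O(TL^{-1+\varepsilon})$. Pairing the diagonal main term with $\widehat{w}(0)=T/2+O(T/L)$ then delivers the asymptotic \eqref{cellell1formula}.
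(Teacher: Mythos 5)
Your proposal has the right starting moves (opening the Dirichlet polynomials, using $\chi_f(\tfrac12+it)\chi_f(\tfrac12-it)=1$ so that a single residual $\chi_f$ survives, then applying the functional equation to convert the conjugated $L$-factor), but the middle of the argument is carried out as if this were the $I_{\ell,\ell}$ case, and that is a real gap.

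The key structural point you miss is what the residual $\chi_f$ buys you. After using the functional equation, the two $L$-factors both carry $+it$, i.e. one has a product like $L(f,\tfrac12+\alpha+it)\,L(f,\tfrac12-\beta+it)$. Expanding this as a double Dirichlet series in $(m,n)$, both variables contribute $m^{-it}n^{-it}=(mn)^{-it}$; there is no $(m/n)^{-it}$ oscillation and hence no ``$m=n$'' diagonal. This is precisely why the paper replaces the double series by the single Dirichlet series $\sum_l \sigma_{\alpha,-\beta}(f,l)\,l^{-1/2-it}e^{-l/T^6}$ (Lemma~\ref{lemmasigma}). The diagonal condition after pairing with the mollifier frequencies is $h_1k_2=h_2k_1 l$, and it is the Cahen--Mellin integral for $e^{-l/T^6}$ (not a $(m,n)$ pole at $m=n$) that produces the extra $z$-variable in the triple contour integral.

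Consequently, the off-diagonal $h_1k_2\ne h_2k_1 l$ contribution is \emph{not} a shifted convolution problem at all, and invoking Blomer's theorem here is both unnecessary and misleading. With a single Dirichlet series in $l$, the off-diagonal terms are killed simply by the rapid decay of $\widehat{w_0}\bigl(\tfrac{1}{2\pi}\log\tfrac{h_2k_1l}{h_1k_2}\bigr)$: since $|1-\tfrac{h_2k_1l}{h_1k_2}|\ge (M_\ell M_{\ell+1})^{-1}\ge T^{-1+\varepsilon}$ when $\nu_\ell+\nu_{\ell+1}<1$, one gets $\widehat{w_0}\ll_B T^{1-\varepsilon B}$ and the whole off-diagonal sum is $O(T^{-A})$ for any $A$. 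The hypothesis $\nu_\ell+\nu_{\ell+1}<1$ is needed exactly for this Fourier-decay bound, not for any shifted-convolution estimate. The Blomer/Bernard machinery you describe is what is required for $I_{\ell,\ell}$ (Lemma~\ref{firstlemmaAFE2}), where both $L$-factors retain opposite signs of $it$ and the off-diagonal $h_2k_1 m\ne h_1k_2 n$ is a genuine shifted convolution; transplanting it here signals a misreading of which term is the hard one. In fact $I_{\ell,\ell+1}$ is the easy case for the off-diagonal.

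Finally, the diagonal evaluation you sketch (``multiplicative generating series $\ldots$ pole of order $2\ell$'') is at the right level of vagueness to be plausible, but the precise mechanism the paper uses is an Euler-product identification of $\sum_{h_2k_1l=h_1k_2}$ as a ratio of Rankin--Selberg $L$-functions (Lemma~\ref{lemma3.3}), a residue computation over the zero-free region contour $\gamma$, and a Beta-integral identity converting $1/((\alpha+u)^\ell(-\beta+u)^\ell)$ into a simplex integral over $\{a+b\le1\}$. These are the ingredients producing the $(ab)^{\ell-1}$, $(1-a-b)$, and $(2\ell^2-1)!$ factors, and none of them appear in your sketch.
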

\begin{theorem} \label{cellelljtheorem}
Let $\ell \in \N$, $j\geq2$ and $\nu_\ell$ and $\nu_{\ell+j}$ be as in Theorem \textnormal{\ref{unsmoothed}}. Then
\[
\int_{-\infty}^{\infty} w(t)V \psi_{\ell} \overline{\psi_{\ell+j}}(\sigma_0+it) dt \ll_\ell TL^{-1+\varepsilon}
\]
uniformly for $\alpha,\beta \ll L^{-1}$.
\end{theorem}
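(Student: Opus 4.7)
The plan is to treat $I_{\ell,\ell+j} = \int w(t) V\psi_\ell\overline{\psi_{\ell+j}}(\sigma_0+it)\,dt$ by the same opening-of-Dirichlet-polynomials strategy used for Theorem \ref{cellell1theorem}, but then to exploit the extra oscillation produced by the mismatch of $\chi_f$-powers to show that the would-be main term is negligible. First I would expand $\psi_\ell$ and $\overline{\psi_{\ell+j}}$ via their definitions \eqref{generalmollifier} and write $V$ as its differential operator acting on $L(f,s)$, then apply the approximate functional equation to $L(f,\sigma_0+it)$ (or, equivalently, use $L(f,s)=\chi_f(s)L(f,1-s)$ directly). The combined $\chi_f$-prefactor from the two mollifier pieces is
\[
\chi_f^{\ell-1}(s+\tfrac12-\sigma_0)\,\overline{\chi_f^{\ell+j-1}(s+\tfrac12-\sigma_0)},
\]
which on the line $\real(s)=\sigma_0$ reduces to $\chi_f^{-j}(\tfrac12+it)$ up to size $O(1/L)$. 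After using the functional equation once to handle $V$, the net $\chi_f$-factor is $\chi_f^{1-j}(\tfrac12+it)$.

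Next I would reorganise the expression as a sum over $(h,k,h',k',m)$ of coefficients in $\mu_{f,\ell}$, $\lambda_f^{*(\ell-1)}$, $\mu_{f,\ell+j}$, $\lambda_f^{*(\ell+j-1)}$ and $\lambda_f(m)$, multiplied by a $t$-integral of the form
\[
\int w(t)\,\chi_f^{1-j}\bigl(\tfrac12+it\bigr)\,\bigl(hh'/(kk'm)\bigr)^{-it}\,dt.
\]
Using Stirling, $\chi_f^{1-j}(\tfrac12+it)$ has phase derivative $(j-1)\log(t/(2\pi\sqrt{N}))$, so the stationary point of the total phase lies at $(t/2\pi\sqrt N)^{j-1}\asymp hh'/(kk'm)$. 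Since the lengths satisfy $hk\le M_\ell$ and $h'k'\le M_{\ell+j}$, and both $M_\ell,M_{\ell+j}\le T^{1/4}$, the right-hand side is at most $T^{1/2}$ in absolute value (and typically much smaller), while for $j\ge 2$ the left-hand side is $\gg T^{j-1}\ge T$. Hence no stationary point lies in $\mathrm{supp}(w)$, and repeated integration by parts (using $w^{(j)}\ll \Delta^{-j}$) gives an arbitrarily large saving in $T$ on the diagonal contribution.

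Step three is the off-diagonal piece, where the shifted-convolution sum structure of the coefficients forces one to appeal to Blomer's estimates \cite{blomer2} in the form extended by Bernard \cite[pp.~208--217]{bernard}. The applicability is governed by the length condition $\nu_\ell+\nu_{\ell+j}<2(j-1)$ noted in the remark; since $\nu\le 1/4$ and $j\ge 2$, this is automatic, and the resulting bound is $O(TL^{-1+\varepsilon})$ after summation. Finally I would combine the stationary-phase saving on the diagonal with the Blomer--Bernard bound on the off-diagonal to obtain the claimed estimate uniformly in $\alpha,\beta\ll L^{-1}$.

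The main obstacle is the second step: making the absence of a stationary point rigorous while the coefficients $\mu_{f,\ell+j}$ and $\lambda_f^{*(\ell+j-1)}$ can be as large as $\tau_{\ell+j}(n)n^{\bm\theta}$, so one must verify that the integration-by-parts savings genuinely dominate the combinatorial growth of the divisor-like coefficients; a lesser but still nontrivial issue is checking that the shift parameters $\alpha,\beta$ do not perturb the stationary-phase analysis, which is handled by keeping them at size $L^{-1}$ throughout.
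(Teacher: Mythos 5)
Your step 2 captures the essential idea and is in fact the mechanism the paper uses: the residual $\chi_f^{j-1}$-power (after one application of the functional equation) produces an oscillatory factor $(\sqrt{N}t/2\pi e)^{-2(j-1)it}$ whose phase is never stationary against $(k_1h_2 l/h_1k_2)^{-it}$, because for $t\in[T/4,2T]$
\[
\bigg(\frac{t\sqrt{N}}{2\pi e}\bigg)^{2(j-1)}\frac{k_1h_2 l}{h_1k_2}\ \gg\ \frac{T^{2(j-1)}}{M_\ell M_{\ell+j}}\ \ge\ T^{\varepsilon_0}
\]
whenever $\nu_\ell+\nu_{\ell+j}<2(j-1)-\varepsilon$, so repeated integration by parts gives arbitrary power saving. (A minor correction: you dropped a factor of $2$ in the phase exponent and put $\sqrt N$ on the wrong side, but neither affects the conclusion for $j\ge 2$.) You also need to account for the Stirling error term $O(t^{-j-1})$, which the paper disposes of by Cauchy--Schwarz and a second-moment bound; you omit this.

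However, step 3 reflects a genuine misunderstanding of the structure of the argument. For $j\ge 2$ there is \emph{no} diagonal/off-diagonal split and Blomer's shifted convolution estimates play no role in the proof of this theorem. The inequality displayed above holds for \emph{every} tuple $(h_1,k_1,h_2,k_2,l)$ in the range of summation, not only for ``off-diagonal'' ones, precisely because the argument of $(\cdot)^{-it}$ is bounded below by $1/(M_\ell M_{\ell+j})$ rather than by anything $l$-dependent. Consequently the entire $t$-integral $J_{3,f}$ is $\ll_{r} T^{-r}$ uniformly in the arithmetic variables; one then bounds the remaining $(h_1,h_2,k_1,k_2)$-sums crudely using $\mu_{f,\ell}(h)\ll h^\varepsilon$ and $\lambda_f^{*(\ell-1)}(k)\ll k^{\bm\theta+\varepsilon}$, yielding $T^{-1+O(\varepsilon)+(\nu_\ell+\nu_{\ell+j})(\bm\theta+1/2)}\ll TL^{-1+\varepsilon}$. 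The condition $\nu_\ell+\nu_{\ell+j}<2(j-1)$ mentioned in the Remark is thus exactly the non-stationary-phase condition \eqref{keyorthogonal2}, not an applicability condition for Blomer's theorem; Blomer's input (via Bernard's Lemma) enters only in the treatment of $I_{\ell,\ell}$, where no $\chi_f$-prefactor protects the diagonal. If you were to try to push the ``off-diagonal'' through Blomer's shifted-convolution machinery with the oscillatory weight present, the argument would not fit, because Blomer's estimate controls smooth weights, not weights carrying a rapidly oscillating $\chi_f^{j-1}$-phase. The fix is simply to drop step 3 entirely and let the integration-by-parts saving from step 2 apply to the whole sum, then control the arithmetic coefficients trivially.
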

The smoothing argument is helpful because we can easily deduce Theorem \ref{unsmoothed} from Theorem \ref{smoothed} and so on. By having chosen $w(t)$ to satisfy conditions (a), (b) and (c) and in addition to being an upper bound for the characteristic function of the interval $[T/2,T]$, and with support $[T/2-\Delta,T+\Delta]$, we get
\[
\int_{T/2}^T |V \psi(\sigma_0+it)|^2 dt \le c(P_1,P_\ell,Q,2R,\nu_1/2,\nu_2/2)\widehat{w}(0) + O(T/L).
\]
Note that $\widehat{w}(0)=T/2 + O(T/L)$. We similarly get a lower bound. Summing over dyadic segments gives the full result.
\subsection{The shift parameters $\alpha$ and $\beta$}
Rather than working directly with $V(s)$, we shall instead consider the following three general shifted integrals
\begin{align}
  I_{\ell,\ell}(\alpha ,\beta ) &= \int_{ - \infty }^\infty  w(t)L (f,\tfrac{1}{2} + \alpha  + it)L (f,\tfrac{1}{2} + \beta  - it)\overline {{\psi _\ell}} {\psi _\ell}({\sigma _0} + it)dt , \nonumber \\
  I_{\ell,\ell+1}(\alpha ,\beta ) &= \int_{ - \infty }^\infty  w(t) L(f,\tfrac{1}{2} + \alpha  + it)L (f,\tfrac{1}{2} + \beta  - it)\overline {{\psi _\ell}} {\psi _{\ell+1}}({\sigma _0} + it)dt , \nonumber \\
	I_{\ell,\ell+j}(\alpha ,\beta ) &= \int_{ - \infty }^\infty  w(t) L(f,\tfrac{1}{2} + \alpha  + it)L (f,\tfrac{1}{2} + \beta  - it)\overline {{\psi _\ell}} {\psi _{\ell+j}}({\sigma _0} + it)dt , \nonumber 
\end{align}
for $j=2,3,4,\cdots$. The computation is now reduced to proving the following three lemmas.
\begin{lemma} \label{cellelllemma}
We have
\[
I_{\ell ,\ell } = c_{\ell ,\ell }(\alpha,\beta)\widehat{w}(0) + O(T/L),
\]
uniformly for $\alpha,\beta \ll L^{-1}$, where
\begin{align}
  c_{\ell ,\ell }(\alpha ,\beta ) &= \frac{1}{\Gamma^2(\ell-1)}\frac{{{2^{2\ell (\ell  - 1)}}}}{{({\ell ^2} + {{(\ell  - 1)}^2} - 1)!}}\frac{{{d^{2\ell }}}}{{d{x^\ell }d{y^\ell }}} \nonumber \\
   &\quad \times \bigg[\int_0^1 {\int_0^1 {\int_0^1 {\int_0^1 {{{(1 - r)}^{{\ell ^2} + {{(\ell  - 1)}^2} - 1}}} } } } {u^{\ell  - 2}}{v^{\ell  - 2}} \nonumber \\
   &\quad \times {T^{{\nu _\ell }(\beta (x - v(y + r)) + \alpha (y - u(x + r)))}}{({T^{2 + {\nu _\ell }(x + y - v(y + r) - u(x + r))}})^{ - t(\alpha  + \beta )}} \nonumber \\
   &\quad \times \left( {\frac{2}{{{\nu _\ell }}} + (x + y - v(y + r) - u(x + r))} \right){(x + r)^{\ell  - 1}}{(y + r)^{\ell  - 1}} \nonumber \\
   &\quad \times P_\ell ^{(\ell (\ell  - 1))}((1 - u)(x + r))P_\ell ^{(\ell (\ell  - 1))}((1 - v)(y + r))dtdrdudv \bigg]_{x = y = 0}. \nonumber
\end{align}
\end{lemma}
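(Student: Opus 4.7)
The plan is to follow the blueprint of Bui-Conrey-Young \cite{bcy} (rooted in the ratios recipe of Conrey-Snaith \cite{conreysnaithratios} and the autocorrelation formalism of \cite{conreyfarmerzirnbauer}), adapted to primitive cusp forms as in Bernard \cite{bernard}. First, I expand $\overline{\psi_\ell}\psi_\ell$ evaluated at $\sigma_0+it$ as a four-fold Dirichlet polynomial in $(h_1,k_1,h_2,k_2)$ with coefficients $\mu_{f,\ell}(h_1)\mu_{f,\ell}(h_2)\lambda_f^{*(\ell-1)}(k_1)\lambda_f^{*(\ell-1)}(k_2)$ and prefactor $\chi_f^{\ell-1}(\sigma_0+it)\chi_f^{\ell-1}(\sigma_0-it)$, using the self-duality of $L(f,s)$ to rewrite the complex conjugate. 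Applying Stirling to this prefactor produces a smooth oscillatory factor $\asymp (t/2\pi)^{-(\ell-1)(\alpha+\beta)+\cdots}$ that combines later with $\widehat{w}(\cdots)$.

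Second, I insert the approximate functional equation for $L(f,\tfrac{1}{2}+\alpha+it)L(f,\tfrac{1}{2}+\beta-it)$ and carry out the $t$-integration against $w(t)$. The outcome splits into a diagonal contribution (with summation variables satisfying the natural pairing $h_1 m = h_2 n$ and $k_1 = k_2$, together with the swap coming from the functional equation) plus an off-diagonal piece. The diagonal contribution is expressed, after Mellin-Barnes encoding of the length constraints $h_i k_i \le M_\ell$ and of the Dirichlet series coefficients, as a multiple contour integral whose local Euler factors collapse to the Rankin-Selberg $L$-function $L(f\otimes f,1+\cdots)$ raised to appropriate powers and $\zeta^{(N)}(2(1+\cdots))$. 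The total pole order at the Rankin-Selberg pole is $\ell^2+(\ell-1)^2$, explaining the factorial $(\ell^2+(\ell-1)^2-1)!$ and the weight $(1-r)^{\ell^2+(\ell-1)^2-1}$ from a Beta-type integral representation of the residue.

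Third, I shift contours past this pole and compute the resulting residues. The Perron-type length integrals contribute the variables $r$ and $t$; the Mellin encodings of $P_\ell[hk]$ contribute $u$ and $v$; and the vanishing conditions $P_\ell^{(j)}(0)=0$ for $0\le j\le \ell(\ell-1)$ let me integrate by parts $\ell(\ell-1)$ times, replacing $P_\ell$ by $P_\ell^{(\ell(\ell-1))}$ evaluated at $(1-u)(x+r)$ and $(1-v)(y+r)$. To extract the required order of vanishing at $\alpha=\beta=0$, I apply Cauchy's integral formula: the auxiliary variables $x,y$ are introduced and the $2\ell$-fold derivative $d^{2\ell}/(dx^\ell dy^\ell)|_{x=y=0}$ produces the powers $(x+r)^{\ell-1}(y+r)^{\ell-1}$ as well as the $u^{\ell-2}v^{\ell-2}$ measures and the $2^{2\ell(\ell-1)}$ normalisation. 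The $\alpha,\beta$-dependence concentrates in the two factors $T^{\nu_\ell(\beta(x-v(y+r))+\alpha(y-u(x+r)))}$ (from the $M_\ell^{\alpha},M_\ell^{\beta}$ arising in the length-integral) and $(T^{2+\nu_\ell(x+y-v(y+r)-u(x+r))})^{-t(\alpha+\beta)}$ (from the Stirling asymptotics of $\chi_f$ combined with the analytic conductor $NT^2$).

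The main obstacle will be controlling the off-diagonal contribution. Unlike for $\zeta(s)$, where the additive divisor problem suffices, here I must invoke averaged shifted-convolution bounds for $\sum_n \lambda_f(n)\lambda_f(n+h)$ in the form proved by Blomer \cite{blomer2} and refined by Bernard \cite[pp. 208-217]{bernard}, see also \cite{blomer1}. The length restriction $\nu_\ell \le (1-2\bm{\theta})/(4+6\bm{\theta}) -\varepsilon$ is precisely what those estimates require to push the off-diagonal piece below $TL^{-1+\varepsilon}$; once that is done, the diagonal main term combined with $\widehat{w}(0) = T/2 + O(T/L)$ yields $c_{\ell,\ell}(\alpha,\beta)\widehat{w}(0) + O(T/L)$ uniformly for $\alpha,\beta \ll L^{-1}$, as claimed. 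Specialising $\alpha=\beta=0$ through the differential operator $Q(-\tfrac{1}{2L}d/ds)$ then recovers Theorem \ref{cellelltheorem}.
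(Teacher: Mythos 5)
There is a genuine conceptual gap in the first step. You write the prefactor as $\chi_f^{\ell-1}(\sigma_0+it)\chi_f^{\ell-1}(\sigma_0-it)$ and propose to apply Stirling to it to obtain an oscillatory factor $\asymp (t/2\pi)^{-(\ell-1)(\alpha+\beta)+\cdots}$. This misreads the construction of $\psi_\ell$. The definition is $\psi_\ell(s)=\chi_f^{\ell-1}(s+\tfrac12-\sigma_0)\sum\cdots$, so at $s=\sigma_0+it$ the prefactor sits on the critical line: $\psi_\ell(\sigma_0+it)$ carries $\chi_f^{\ell-1}(\tfrac12+it)$, and $\overline{\psi_\ell(\sigma_0+it)}$ carries $\chi_f^{\ell-1}(\tfrac12-it)$ (using $|\chi_f(\tfrac12+it)|=1$). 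Since $\chi_f(\tfrac12+it)\chi_f(\tfrac12-it)=1$ identically, the product of the prefactors is exactly $1$ for every $t$, with no Stirling asymptotics and no dependence on $\alpha,\beta$ whatsoever. This cancellation is precisely what makes $I_{\ell,\ell}$ structurally parallel to the classical $\zeta$-mollifier case: after it, $I_{\ell,\ell}$ reduces to a sum of $P_\ell[h_1k_1]P_\ell[h_2k_2]$ weights against $J_{2,f}=\int w(t)(h_2k_1/h_1k_2)^{-it}L(f,\tfrac12+\alpha+it)L(f,\tfrac12+\beta-it)\,dt$, and Lemma~\ref{firstlemmaAFE2} applies directly. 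Inserting a Stirling factor that is not there would corrupt both the extraction of the diagonal and the identification of the $T^{-2(\alpha+\beta)}$ factor.

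Two further inaccuracies compound this. The diagonal condition coming from $J_{2,f}$ is the single constraint $h_2k_1m=h_1k_2n$, not the pair $h_1m=h_2n$, $k_1=k_2$; the Euler product evaluation in the paper is organised entirely around that single linear relation. And the factor $(T^{2+\nu_\ell(\cdots)})^{-t(\alpha+\beta)}$ does not come from Stirling applied to the mollifier prefactors (which have cancelled), but from the AFE's $X_{\alpha,\beta,t}$ term: after the approximate functional equation, $I_{\ell,\ell}=I'_{\ell,\ell}+I''_{\ell,\ell}+E$ with $I''_{\ell,\ell}(\alpha,\beta)=T^{-2(\alpha+\beta)}I'_{\ell,\ell}(-\beta,-\alpha)+O(T/L)$, and the two pieces are combined via the identity $\frac{1-z^{-\alpha-\beta}}{\alpha+\beta}=\log z\int_0^1 z^{-t(\alpha+\beta)}\,dt$, which is the origin of both the $t$-integral and the $T^{2}$. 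Your scaffold of Mellin encoding, Rankin-Selberg Euler products, the zero-free region contour, and the residue extraction via $d^{2\ell}/(dx^\ell dy^\ell)$ is the right outline, but the prefactor step needs to be corrected before the diagonal analysis can be set up properly.
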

\begin{lemma} \label{cellell1lemma}
We have
\[
I_{\ell,\ell+1} = c_{\ell,\ell+1}(\alpha,\beta)\widehat{w}(0) + O(T/L),
\]
uniformly for $\alpha,\beta \ll L^{-1}$, where
\begin{align}
  c_{\ell ,\ell  + 1}(\alpha ,\beta ) &= \frac{{{2^{2{\ell ^2}}}}}{{(2{\ell ^2} - 1)!}}{\left( {\frac{{{\nu _{\ell  + 1}}}}{{{\nu _\ell }}}} \right)^{\ell (\ell  + 1)}}\frac{{{d^{2\ell }}}}{{d{x^\ell }d{y^\ell }}} \bigg[ \nonumber \\
   &\quad \times \iint_{\substack{a + b \le 1 \\ a, b \ge 0}} {\int_0^1 {{u^{2\ell }}{{(1 - u)}^{2{\ell ^2} - 1}}{{(M_\ell ^{ - x}M_{\ell  + 1}^{au})}^{ - \alpha }}{{(M_\ell ^yM_{\ell  + 1}^{ - bu}T^2)}^{ - \beta }}} } \nonumber \\
   &\quad \times P_\ell ^{(\ell (\ell  - 1))}\left( {x + y + 1 - (1 - u)\frac{{{\nu _{\ell  + 1}}}}{{{\nu _\ell }}}} \right)P_{\ell  + 1}^{(\ell (\ell  + 1))}((1 - a - b)u){(ab)^{\ell  - 1}}dudadb \bigg]_{x = y = 0}. \nonumber  
\end{align}
\end{lemma}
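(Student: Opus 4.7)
The plan is to follow the \emph{ratios conjecture} framework of \cite{bcy, conreyfarmerzirnbauer, conreysnaithratios}, adapted to the cross-term structure, in close analogy with the treatment of $I_{\ell,\ell}$ in Lemma~\ref{cellelllemma}. First I would open $\overline{\psi_\ell}\psi_{\ell+1}(\sigma_0+it)$ as a Dirichlet polynomial over quadruples $(h_1,k_1,h_2,k_2)$. Since $|\chi_f(\tfrac{1}{2}+it)|=1$ on the critical line, the prefactor $\overline{\chi_f^{\ell-1}(\tfrac{1}{2}+it)}\,\chi_f^{\ell}(\tfrac{1}{2}+it)$ collapses to a single $\chi_f(\tfrac{1}{2}+it)$. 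I would then invoke the functional equation $\chi_f(w)L(f,1-w)=L(f,w)$ with $w=\tfrac{1}{2}+\beta-it$ to trade $L(f,\tfrac{1}{2}+\beta-it)$ for $L(f,\tfrac{1}{2}-\beta+it)$ modulo an additional $\chi_f$-factor; combined with the surviving $\chi_f$ from the mollifier, the net prefactor is a smooth gamma ratio of size $T^{-\beta}$. After this manoeuvre both $L$-factors depend on $+it$, and expanding them via \eqref{eq:def_L_f} rewrites $I_{\ell,\ell+1}(\alpha,\beta)$ as a sextuple sum weighted by $\overline{\mu_{f,\ell}}\,\overline{\lambda_f^{*\ell-1}}\,\mu_{f,\ell+1}\lambda_f^{*\ell}$ and by the Fourier coefficients $\lambda_f(m_1)\lambda_f(m_2)$.

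Next I would perform the $t$-integration. Because $w$ is smooth and compactly supported on a scale $\Delta=T/L$, the oscillatory integral $\int w(t)(m_2 h_1 k_2/(m_1 k_1 h_2))^{it}\,dt$ is, via the rapid decay of $\widehat{w}$, negligible unless the ratio lies within $L/T$ of $1$. Separating the diagonal $m_2 h_1 k_2 = m_1 k_1 h_2$ from the off-diagonal, the diagonal contribution is simplified using the multiplicative relations \eqref{recursive1} and the Rankin--Selberg identity \eqref{rankinselbergsquare}, and then encoded as a multiple Mellin--Perron contour integral whose integrand is a ratio of shifted factors involving $\zeta^{(N)}$ and $L(f\otimes f,\cdot)$. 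Shifting the contours past the pole at $s=1$ and collecting the residue produces the main term; the Taylor vanishing conditions \eqref{conditiononP} on $P_\ell,P_{\ell+1}$ then convert the remaining discrete sums, after the scaling $h_i = M_\ell^{a_i}$ and $k_i = M_{\ell+1}^{u_i}$, into the operator $\tfrac{d^{2\ell}}{dx^\ell\,dy^\ell}$ and the triple integral on the simplex $a+b\le 1$ appearing in \eqref{cellell1formula}. The residual weight $(ab)^{\ell-1}$ arises from the arithmetic factor $\lambda_f^{*\ell-1}$ through its Rankin--Selberg Dirichlet series, and the shift dependence $(M_\ell^{-x}M_{\ell+1}^{au})^{-\alpha}(M_\ell^{y}M_{\ell+1}^{-bu}T^2)^{-\beta}$ assembles from the exponents of the $L$-factors together with the $T^{-\beta}$ contribution of the gamma ratio.

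The main obstacle is the off-diagonal contribution $m_2 h_1 k_2 \neq m_1 k_1 h_2$. In contrast to the Riemann zeta-function situation, where divisor bounds dispose of the off-diagonal immediately, for $L(f,s)$ the coefficients $\lambda_f(n)$ admit no such cheap simplification and one must appeal to shifted convolution sums on average. I would apply the bounds of Blomer \cite{blomer1, blomer2} as extended by Bernard \cite[pp.~208--217]{bernard}; these are sharp precisely within the admissible window \eqref{boundonu} and under the additional constraint $\nu_\ell+\nu_{\ell+1}<1$ of Remark~(a), which together force the off-diagonal into the $O(T/L)$ remainder. The detailed deployment of this machinery, common to all cross-terms $I_{\ell,\ell+j}$, is relegated to Section~3, where the same input will also yield the weaker estimate of Theorem~\ref{cellelljtheorem} for $j\ge 2$.
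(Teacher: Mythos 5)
Your high-level map of the argument is largely correct: after collapsing the $\chi_f$ prefactors to a single $\chi_f(\tfrac{1}{2}+it)$, applying the functional equation to rewrite $L(f,\tfrac{1}{2}+\beta-it)$ in terms of $L(f,\tfrac{1}{2}-\beta+it)$, separating the diagonal, and passing through a multiple contour integral for the $L(f\otimes f)$-ratio plus an Euler--Maclaurin evaluation of the resulting coefficient sums, you do reach the stated $c_{\ell,\ell+1}(\alpha,\beta)$. (One small arithmetic slip: the gamma-ratio of $\chi_f(\tfrac{1}{2}+\beta-it)\chi_f(\tfrac{1}{2}+it)$ is $\asymp T^{-2\beta}$, not $T^{-\beta}$, consistent with the $T^2$ inside $(M_\ell^y M_{\ell+1}^{-bu}T^2)^{-\beta}$ in the formula.)

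However, your treatment of the off-diagonal contains a genuine mistake. You propose to invoke the Blomer--Bernard shifted convolution bounds \cite{blomer1,blomer2,bernard}. That machinery is what handles the off-diagonal of the \emph{square} term $I_{\ell,\ell}$ (Lemma~\ref{firstlemmaAFE2}), because there the $\chi_f$-prefactors from the mollifiers cancel completely and one is left with the approximate functional equation producing a genuine two-signed sum $\sum_{hm\ne kn}$. For the \emph{cross} term $I_{\ell,\ell+1}$, precisely because a single net $\chi_f$ survives the mollifier product, you can — and the paper does — first use the functional equation (your own step) to make both $L$-factors depend on $+it$, so that the full $t$-integrand carries only a single oscillation $\big(\tfrac{h_2 k_1 l}{h_1 k_2}\big)^{-it}$. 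The $t$-integration then produces $\widehat{w_0}\big(\tfrac{1}{2\pi}\log\tfrac{h_2 k_1 l}{h_1 k_2}\big)$, and the off-diagonal $h_1 k_2\ne h_2 k_1 l$ is annihilated by the rapid decay $\widehat{w_0}(\tfrac{1}{2\pi}\log x)\ll_B T(1+\tfrac{T}{L}\log x)^{-B}$, once one observes that the constraint $\nu_\ell+\nu_{\ell+1}<1$ forces $|1-\tfrac{h_2 k_1 l}{h_1 k_2}|\ge (M_\ell M_{\ell+1})^{-1}\ge T^{-1+\varepsilon}$. This is a purely ``divisor bound plus decay of $\widehat{w_0}$'' estimate, $\ll T^{-2017}$ after choosing $B$ large — no shifted convolution sums enter. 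Indeed, if you tried to run the Blomer--Bernard input here you would be fighting against the surviving $\chi_f(\tfrac{1}{2}+it)$ oscillation, which changes the nature of the problem; the whole point of the functional equation manoeuvre is to avoid that. Relatedly, the constraint $\nu_\ell+\nu_{\ell+1}<1$ from Remark~(a) is not what makes a shifted-convolution bound ``sharp'' — it is needed solely to guarantee the lower bound on $|1 - \tfrac{h_2 k_1 l}{h_1 k_2}|$ that feeds the decay of $\widehat{w_0}$. Finally, the same comment applies to your last sentence: $I_{\ell,\ell+j}$ for $j\ge 2$ is not obtained from the same shifted-convolution input but from yet another route, exploiting the \emph{extra} oscillation $\chi_f^{j-1}$ via repeated integration by parts and the size condition $\nu_\ell+\nu_{\ell+j}<2(j-1)$.
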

\begin{lemma} \label{cellelljlemma}
For $j=2,3,4,\cdots$, we have
\[
\int_{-\infty}^{\infty} w(t)V \psi_{\ell} \overline{\psi_{\ell+1}}(\sigma_0+it) dt \ll TL^{-1+\varepsilon}
\]
uniformly for $\alpha,\beta \ll L^{-1}$.
\end{lemma}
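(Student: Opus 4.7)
The plan is to exploit the fact that $\overline{\psi_\ell}\,\psi_{\ell+j}$ carries a net chi-factor $\chi_f^{j}(\tfrac12+it)$ on the critical line, which for $j\geq 2$ oscillates fast enough that the $t$-integral is negligible after integration by parts against $w$.

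First, using $|\chi_f(\tfrac12+it)|=1$, I would write
\[
\overline{\psi_\ell(\sigma_0+it)}\,\psi_{\ell+j}(\sigma_0+it)=\chi_f^{j}(\tfrac12+it)\,D(t),
\]
where $D(t)$ is a double Dirichlet polynomial of length at most $M_\ell M_{\ell+j}\leq T^{\nu_\ell+\nu_{\ell+j}}$ whose coefficients are bounded by divisor functions via Deligne's bound \eqref{ramanujanhypothesis} and the trivial estimate $|\mu_{f,m}(n)|\leq\tau_m(n)$. Next I would apply the approximate functional equation to each of $L(f,\tfrac12+\alpha+it)$ and $L(f,\tfrac12+\beta-it)$, splitting their product into four pieces, each a Dirichlet polynomial of effective length $O(T)$ in the variables $m,n$, possibly multiplied by one or two additional $\chi_f$-factors. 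On the critical line the identities $\chi_f(\tfrac12+\beta-it)\sim\chi_f^{-1}(\tfrac12+it)$ and $\chi_f(\tfrac12+\alpha+it)\sim\chi_f(\tfrac12+it)$ imply that the total chi-factor in each of the four pieces is $\chi_f^{r}(\tfrac12+it)$ with $r\in\{j-1,j,j+1\}$; since $j\geq 2$, one always has $r\geq 1$.

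After expanding all Dirichlet polynomials, $I_{\ell,\ell+j}(\alpha,\beta)$ becomes a finite sum of oscillatory integrals
\[
J(\xi)=\int_{-\infty}^{\infty} w(t)\,\chi_f^{r}(\tfrac12+it)\,\xi^{-it}\,dt,
\]
where $\xi$ is a monomial ratio in the summation variables with $|\log\xi|\leq (1+\nu_\ell+\nu_{\ell+j})L$. By Stirling,
\[
\chi_f(\tfrac12+it)=\bigg(\frac{t\sqrt{N}}{2\pi}\bigg)^{-2it}e^{it}\bigl(1+O(1/t)\bigr),
\]
so the phase $\varphi(t)$ of the integrand satisfies $\varphi'(t)=-2r\log\bigl(t\sqrt{N}/(2\pi e)\bigr)-\log\xi$, which for $t\asymp T$ and $r\geq 1$ is of size $\gg L$ provided $\nu_\ell+\nu_{\ell+j}<2(j-1)$, precisely the condition noted in Remark~a). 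Iterated integration by parts against $w$, using $w^{(k)}\ll_k(L/T)^k$, then yields $|J(\xi)|\ll_A TL^{-A}$ for arbitrary $A>0$, uniformly in $\xi$. Combined with the divisor-bounded sums over the remaining coefficient variables, which contribute at most a factor $L^{O(1)}$, this gives $I_{\ell,\ell+j}\ll TL^{-1+\varepsilon}$ once $A$ is chosen sufficiently large.

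The main obstacle will be tracking the chi-factor bookkeeping through the approximate functional equation, in particular the small phase shifts introduced by $\alpha,\beta\ll L^{-1}$ and the $O(1/t)$ error in Stirling, so as to preserve the uniform lower bound on $|\varphi'(t)|$ across all $L^{O(1)}$ summands. This is standard but tedious; the essential analytic input is the oscillatory bound on $J(\xi)$, which in turn rests on Remark~a)'s condition $\nu_\ell+\nu_{\ell+j}<2(j-1)$, automatic in our setting from $\nu\leq 1/4$ and $j\geq 2$ but requiring explicit verification in any adaptation of the method to other $L$-functions.
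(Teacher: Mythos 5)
Your overall plan matches the paper's proof mechanism: both isolate a net $\chi_f^r(\tfrac12+it)$ oscillation with $r\geq 1$, observe that the resulting phase derivative is $\gg L$ once $\nu_\ell+\nu_{\ell+j}<2(j-1)$, and then kill the oscillatory integral by repeated integration by parts against $w$. The paper's pre-processing is slightly different — it first applies the functional equation to $L(f,\tfrac12+\beta-it)$ to reduce the net chi-power to exactly $j-1$, and then invokes a one-sided approximate functional equation (Lemma~\ref{lemmasigma}) whose single summation variable $l$ satisfies $l\geq 1$, so there is only one piece to track; your direct two-sided AFE applied to each $L$-factor yields four pieces with net powers $j-1, j, j, j+1$, but you are right that the binding constraint is the same, coming from the piece with power $j-1$.

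There is, however, a genuine quantitative gap in the final assembly. Your two claims — that integration by parts gives $|J(\xi)|\ll_A TL^{-A}$, and that the remaining coefficient sums contribute only $L^{O(1)}$ — are both incorrect, and the errors do not cancel once made precise. Since $|\varphi'(t)|\gg L$ while $w^{(k)}\ll(L/T)^k$, each integration by parts actually gains a factor of size $\frac{L/T}{L}=\frac1T$, so the correct bound is $|J(\xi)|\ll_A T^{1-A}$ (the paper records it as $\ll_r T^{-r}$, see \eqref{keyorthogonal3}), which is far stronger than $TL^{-A}$. On the other hand, the coefficient sums are not $L^{O(1)}$: the mollifier variables contribute $\asymp M_\ell^{1/2+\bm\theta}M_{\ell+j}^{1/2+\bm\theta}=T^{(\nu_\ell+\nu_{\ell+j})(1/2+\bm\theta)+o(1)}$, and the AFE variables $m,n$ (effective length $\asymp T$ each) contribute a further power of $T$. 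Plugging your stated $TL^{-A}$ bound against the true coefficient-sum size $T^{c}$ with $c>0$ gives $I_{\ell,\ell+j}\ll T^{1+c}L^{-A}$, which is never $\ll TL^{-1+\varepsilon}$ for any fixed $A$. The argument closes only once you use the correct super-polynomial decay $|J(\xi)|\ll T^{-A}$, which easily absorbs the $T^c$ from the coefficient sums — this is precisely what the paper does — so you should replace both claims with the correct ones before the proof can be considered complete.
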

To get Theorems \ref{cellelltheorem} and \ref{cellell1theorem} we use the following technique. Let $I_{\star}$ denote either of the integrals in questions, and note that
\[
{I_ {\star} } = Q\left( { - \frac{1}{{2\log T}}\frac{d}{{d\alpha }}} \right)Q\left( { - \frac{1}{{2\log T}}\frac{d}{{d\beta }}} \right){I_{\star}}(\alpha ,\beta ){\bigg|_{\alpha  = \beta  = R/L}}.
\]
Since $I_{\star}(\alpha ,\beta )$ and $c_{\star}(\alpha ,\beta )$ are holomorphic with respect to $\alpha,\beta$ small, the derivatives appearing in the equation above can be obtained as integrals of radii $\asymp L^{-1}$ around the points $-R/L$, using Cauchy's integral formula. Since the error terms hold uniformly on these contours, the same error terms that hold for $I_{\star}(\alpha ,\beta )$ also hold for $I_{\star}$. That the above differential operator on $c_{\star}(\alpha ,\beta )$ does indeed give $c_{\star}$ follows from
\[
Q \bigg( \frac{-1}{2\log T} \frac{d}{d\alpha} \bigg)X^{-\alpha} = Q\bigg( \frac{\log X}{2\log T}\bigg) X^{-\alpha/2}.
\]
Note that from the above equation we get
\begin{align}
  Q\left( {\frac{{ - 1}}{{2\log T}}\frac{d}{{d\alpha }}} \right) & Q\left( {\frac{{ - 1}}{{2\log T}}\frac{d}{{d\beta }}} \right){(M_\ell ^{ - x}M_{\ell  + 1}^{au})^{ - \alpha }}{(M_\ell ^yM_{\ell  + 1}^{ - bu}T)^{ - \beta }} \nonumber \\
   &= Q\bigg( {\frac{{\log M_\ell ^{ - x}M_{\ell  + 1}^{au}}}{{2\log T}}} \bigg)Q\bigg( {\frac{{\log M_\ell ^yM_{\ell  + 1}^{ - bu}T}}{{2\log T}}} \bigg){(M_\ell ^{ - x}M_{\ell  + 1}^{au})^{ - \alpha }}{(M_\ell ^yM_{\ell  + 1}^{ - bu}T)^{ - \beta }} \nonumber \\
   &= Q( - x{\nu _\ell/2 } + au{\nu _{\ell  + 1}}/2)Q(1 + {\nu _\ell }y/2 - bu{\nu _{\ell  + 1}/2}){(M_\ell ^{ - x}M_{\ell  + 1}^{au})^{ - \alpha/2 }}{(M_\ell ^yM_{\ell  + 1}^{ - bu}T)^{ - \beta/2 }}, \nonumber 
\end{align}
as well as
\begin{align}
  Q\left( {\frac{{ - 1}}{{2\log T}}\frac{d}{{d\alpha }}} \right)&Q\left( {\frac{{ - 1}}{{2\log T}}\frac{d}{{d\beta }}} \right){T^{{\nu _\ell }(\beta (x - v(y + r)) + \alpha (y - u(x + r)))}}{({T^{2 + {\nu _\ell }(x + y - v(y + r) - u(x + r))}})^{ - t(\alpha  + \beta )}} \nonumber \\
   &= Q({\nu _\ell/2 }( - x + v(y + r)) + t(2 + {\nu _\ell/2 }(x + y - v(y + r) - u(x + r)))) \nonumber \\
   &\quad \times Q({\nu _\ell/2 }( - y + u(x + r)) + t(2 + {\nu _\ell/2 }(x + y - v(y + r) - u(x + r)))) \nonumber \\
	 &\quad \times {T^{{\nu _\ell/2 }(\beta (x - v(y + r)) + \alpha (y - u(x + r)))}}{({T^{2 + {\nu _\ell/2 }(x + y - v(y + r) - u(x + r))}})^{ - t(\alpha  + \beta )}} .\nonumber 
\end{align}
Hence using the differential operators $Q((-1/2\log T) d/d\alpha)$ and $Q((-1/2\log T) d/d\beta)$ on the last line of $c_{\ell,\ell+1}(\alpha,\beta)$ we get in the integrand
\begin{align}
  e^{2R}  e^{2R[{\nu _\ell }(y - x)/2 + u{\nu _{\ell  + 1}}(a - b)/2]} Q( - x{\nu _\ell/2 } + au{\nu _{\ell  + 1}/2})Q(1 + y{\nu _\ell/2 } - bu{\nu _{\ell  + 1}/2}), \nonumber 
\end{align}
by setting $\alpha = \beta = -R/L$ and using $T^{z/L}=T^{z/\log T}=e^{z}$. Hence Theorem \ref{cellell1theorem} follows. Similarly, when we use the differential operators $Q((-1/2\log T) d/d\alpha)$ and $Q((-1/2\log T) d/d\beta)$ on the integrand of $c_{\ell,\ell}(\alpha,\beta)$ it becomes
\begin{align}
  &e^{ - \frac{\nu _\ell}{2}R[x + y - v(y + r) - u(x + r)]}{e^{2Rt[1 + \frac{\nu _\ell}{2}(x + y - v(y + r) - u(x + r))]}} \nonumber \\
   &\quad \times Q({\nu _\ell }( - x + v(y + r))/2 + t(1 + {\nu _\ell }/2(x + y - v(y + r) - u(x + r)))) \nonumber \\
   &\quad \times Q({\nu _\ell }( - y + u(x + r))/2 + t(1 + {\nu _\ell }/2(x + y - v(y + r) - u(x + r)))), \nonumber  
\end{align}
by the same substitutions. This proves Theorem \ref{cellelltheorem}.
\section{Preliminary tools}
The following results are needed throughout the paper. The first lemma is used to compute the "square" terms $I_{\ell,\ell}$. 
We start by quoting a result from Bernard's paper \cite[Lemma 1]{bernard}, who in turn quoted it with small modifications from \cite[Theorem 5.3]{iwanieckowalski}.
\begin{lemma} \label{firstlemmaAFE}
Let $G$ be any entire function which decays exponentially fast in vertical strips, is even and normalised by $G(0)=1$. 
Then for any $\alpha$, $\beta\in\C$ such that $0 \le |\real(\alpha)| , |\real(\beta)| \le \tfrac{1}{2}$, one has
\begin{align}
  L(f,\tfrac{1}{2} + \alpha  + it)L(f,\tfrac{1}{2} + \beta  - it) &= \sum\limits_{m \ge 1} {\sum\limits_{n \ge 1} {\frac{{{\lambda _f}(m){\lambda _f}(n)}}{{{m^{1/2 + \alpha }}{n^{1/2 + \beta }}}}} {{\left( {\frac{m}{n}} \right)}^{ - it}}{V_{\alpha ,\beta }}(mn,t)}  \nonumber \\
   &\quad + {X_{\alpha ,\beta ,t}}(t)\sum\limits_{m \ge 1} {\sum\limits_{n \ge 1} {\frac{{{\lambda _f}(m){\lambda _f}(n)}}{{{m^{1/2 - \beta }}{n^{1/2 - \alpha }}}}} {{\left( {\frac{m}{n}} \right)}^{ - it}}{V_{ - \beta , - \alpha }}(mn,t)},  \nonumber  
\end{align}
where
\[{g_{\alpha ,\beta }}(s,t) = \frac{{{L_\infty }(f,\tfrac{1}{2} + \alpha  + s + it){L_\infty }(f,\tfrac{1}{2} + \beta  + s - it)}}{{{L_\infty }(f,\tfrac{1}{2} + \alpha  + it){L_\infty }(f,\tfrac{1}{2} + \beta  - it)}},\]
and
\[
{V_{\alpha ,\beta }}(x,t) = \frac{1}{{2\pi i}}\int_{(1)} {\frac{{G(s)}}{s}{g_{\alpha ,\beta }}(s,t){x^{ - s}}ds} ,
\]
as well as
\[{X_{\alpha ,\beta ,t}}(t) = \frac{{{L_\infty }(f,\tfrac{1}{2} - \alpha  - it){L_\infty }(f,\tfrac{1}{2} - \beta  + it)}}{{{L_\infty }(f,\tfrac{1}{2} + \alpha  + it){L_\infty }(f,\tfrac{1}{2} + \beta  - it)}}.\]
\end{lemma}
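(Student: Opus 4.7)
The plan is to establish this approximate functional equation by the standard Mellin-Barnes contour argument, closely following the template of \cite[Theorem 5.3]{iwanieckowalski}. Consider the integral
\[
\mathcal{I}(\alpha,\beta,t) := \frac{1}{2\pi i}\int_{(2)} \frac{G(s)}{s}\, g_{\alpha,\beta}(s,t)\, L\bigl(f,\tfrac{1}{2}+\alpha+s+it\bigr)\, L\bigl(f,\tfrac{1}{2}+\beta+s-it\bigr)\, ds.
\]
On the line $\real(s)=2$ both $L$-factors are absolutely convergent Dirichlet series; since $G$ decays exponentially in vertical strips, one can interchange summation and integration. The $s$-integral then produces exactly the kernel $V_{\alpha,\beta}(mn,t)$ of the statement, and $\mathcal{I}$ is seen to equal the first double sum on the right-hand side of the lemma.

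Next I would shift the contour to $\real(s)=-2$. Because $f$ is a primitive cusp form, the $L$-function $L(f,\cdot)$ is entire, and $g_{\alpha,\beta}(s,t)$ is meromorphic with no poles in $-2\le\real(s)\le 2$ for $|\real(\alpha)|,|\real(\beta)|\le\tfrac{1}{2}$; the only singularity crossed is the simple pole of $1/s$ at $s=0$, whose residue is
\[
G(0)\, g_{\alpha,\beta}(0,t)\, L\bigl(f,\tfrac{1}{2}+\alpha+it\bigr) L\bigl(f,\tfrac{1}{2}+\beta-it\bigr) = L\bigl(f,\tfrac{1}{2}+\alpha+it\bigr) L\bigl(f,\tfrac{1}{2}+\beta-it\bigr),
\]
using $G(0)=1$ and $g_{\alpha,\beta}(0,t)=1$. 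The contour shift is justified by the exponential decay of $G$ dominating the polynomial-in-$|\imag s|$ growth of the remaining factors, via convexity bounds for $L(f,\cdot)$ in the critical strip.

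On the shifted line $\real(s)=-2$ I would apply the functional equation $L(f,w)=\chi_f(w) L(f,1-w)$ to each of the two $L$-factors, producing $\chi_f(\tfrac{1}{2}+\alpha+s+it)\chi_f(\tfrac{1}{2}+\beta+s-it)$. Using $\chi_f(w)=\varepsilon(f) L_\infty(f,1-w)/L_\infty(f,w)$ and $\varepsilon(f)^2=1$, this product of $\chi_f$'s combines with $g_{\alpha,\beta}(s,t)$ to recover precisely $g_{-\beta,-\alpha}(-s,t)\, X_{\alpha,\beta,t}(t)$ by direct cancellation of archimedean factors. Then substituting $s\mapsto -s$ and exploiting the evenness of $G$ and $s^{-1}\,ds$ being odd (hence the resulting sign is absorbed), the contour returns to $\real(s)=2$, where the dual $L$-values now read $L(f,\tfrac{1}{2}-\beta+s-it)L(f,\tfrac{1}{2}-\alpha+s+it)$ and admit absolutely convergent Dirichlet series expansions.

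Finally, expanding these dual Dirichlet series and performing the same sum-integral interchange as in the first step produces
\[
X_{\alpha,\beta,t}(t)\sum_{m,n\ge 1}\frac{\lambda_f(m)\lambda_f(n)}{m^{1/2-\beta}\, n^{1/2-\alpha}}\Bigl(\frac{m}{n}\Bigr)^{-it} V_{-\beta,-\alpha}(mn,t),
\]
which together with the residue contribution and the first-step identity yields the two-sum decomposition claimed. The main obstacle is the bookkeeping in the third step: verifying explicitly that $g_{\alpha,\beta}(s,t)$ times the two $\chi_f$-factors reassembles into $g_{-\beta,-\alpha}(-s,t) X_{\alpha,\beta,t}(t)$. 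This is routine but requires careful tracking of which arguments of $L_\infty$ cancel, and careful attention to the domain constraint $|\real(\alpha)|,|\real(\beta)|\le\tfrac{1}{2}$ to ensure that no poles of the archimedean $L$-factors are encountered during the contour manipulations.
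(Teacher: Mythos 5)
The paper does not actually prove Lemma \ref{firstlemmaAFE}; it cites it directly from Bernard \cite[Lemma 1]{bernard} and Iwaniec--Kowalski \cite[Theorem 5.3]{iwanieckowalski}. Your Mellin--Barnes contour argument is precisely the standard proof underlying those references, and your outline is correct: the integral over $\real(s)=2$, the residue at $s=0$, the functional equation on $\real(s)=-2$, the substitution $s\mapsto -s$, and the identity
\[
g_{\alpha,\beta}(s,t)\,\chi_f\bigl(\tfrac{1}{2}+\alpha+s+it\bigr)\,\chi_f\bigl(\tfrac{1}{2}+\beta+s-it\bigr)
= g_{-\beta,-\alpha}(-s,t)\,X_{\alpha,\beta,t}(t)
\]
all check out, the last by direct cancellation of archimedean factors together with $\varepsilon(f)^2=1$.

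One imprecision worth fixing: you assert that ``$g_{\alpha,\beta}(s,t)$ is meromorphic with no poles in $-2\le\real(s)\le 2$.'' That is false at weight $k=2$. There the numerator of $g$ contains $\Gamma(1+\alpha+s+it)$, whose poles $s=-1-\alpha-it-n$ have real parts $-1-\real(\alpha)-n$, and for $\real(\alpha)$ near $-\tfrac12$ the right-most one sits near $\real(s)=-\tfrac12$, squarely inside your strip. The correct statement is that the product
\[
g_{\alpha,\beta}(s,t)\,L\bigl(f,\tfrac{1}{2}+\alpha+s+it\bigr)\,L\bigl(f,\tfrac{1}{2}+\beta+s-it\bigr)
=\frac{\Lambda\bigl(f,\tfrac{1}{2}+\alpha+s+it\bigr)\,\Lambda\bigl(f,\tfrac{1}{2}+\beta+s-it\bigr)}{L_\infty\bigl(f,\tfrac{1}{2}+\alpha+it\bigr)\,L_\infty\bigl(f,\tfrac{1}{2}+\beta-it\bigr)}
\]
is entire in $s$, because the Gamma poles of $g$ are cancelled by the trivial zeros of $L(f,\cdot)$. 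Thus only the simple pole from $1/s$ at $s=0$ contributes to the residue, exactly as you use; the justification should appeal to the entireness of $\Lambda$, not to a (false) absence of poles of $g$. The rest of your bookkeeping, including the absorption of the extra sign by the evenness of $G$ and the oddness of $1/s$, is correct.
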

\begin{lemma} \label{firstlemmaAFE2}
Suppose that $w$ satisfies the three conditions \textnormal{(a)}, \textnormal{(b)}, \textnormal{(c)}, and suppose that $h$, $k$ are positive integers with $h,k \le T^{\nu}$. Then one has
\begin{align}
  \int_{ - \infty }^\infty  w(t){{\left( {\frac{h}{k}} \right)}^{ - it}}&L(f,\tfrac{1}{2} + \alpha  + it)L(f,\tfrac{1}{2} + \beta  - it)dt = \sum\limits_{hm = kn} {\frac{{{\lambda _f}(m){\lambda _f}(n)}}{{{m^{1/2 + \alpha }}{n^{1/2 + \beta }}}}\int_{ - \infty }^\infty  {w(t){V_{\alpha ,\beta }}(mn,t)dt} }  \nonumber \\
   &\quad + \sum\limits_{hm = kn} \frac{{{\lambda _f}(m){\lambda _f}(n)}}{{{m^{1/2 - \beta }}{n^{1/2 - \alpha }}}}\int_{ - \infty }^\infty w(t){V_{ - \beta , - \alpha }}(mn,t){X_{\alpha ,\beta ,t}}dt \nonumber \\
	 &\quad + O_\varepsilon((hk)^{(1+\bm\theta)/2} T^{1/2+\bm\theta +\varepsilon}) , \nonumber
\end{align}
for $\alpha, \beta \ll L^{-1}$.
\end{lemma}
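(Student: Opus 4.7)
The plan is a direct substitution of the approximate functional equation into the integral, followed by an interchange of summation and integration and a careful treatment of the off-diagonal contribution. First, I would apply Lemma \ref{firstlemmaAFE} pointwise in $t$ to replace the product $L(f,\tfrac12+\alpha+it)L(f,\tfrac12+\beta-it)$ by the two double Dirichlet series appearing there. Because $G(s)$ decays exponentially in vertical strips, the function $V_{\alpha,\beta}(mn,t)$ decays super-polynomially in $mn/t^2$; hence the double series are absolutely convergent uniformly on the support of $w$, and Fubini's theorem permits the interchange of summation and integration.

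Next, I would combine the factor $(h/k)^{-it}$ from the integrand with the factor $(m/n)^{-it}$ from the AFE to form $(hm/kn)^{-it}$, and split each of the resulting $(m,n)$-sums into a diagonal part $hm=kn$ and an off-diagonal part $hm\ne kn$. On the diagonal the oscillatory factor disappears and one recovers the two stated main terms exactly, since $V_{\alpha,\beta}(mn,t)$ (respectively $V_{-\beta,-\alpha}(mn,t)\,X_{\alpha,\beta,t}$) depends on $m$ and $n$ only through the product $mn$.

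The core difficulty is bounding the off-diagonal contribution
\[
\sum_{\substack{m,n\\ hm\ne kn}}\frac{\lambda_f(m)\lambda_f(n)}{m^{1/2+\alpha}n^{1/2+\beta}}\int_{-\infty}^{\infty}w(t)\bigl(hm/kn\bigr)^{-it}V_{\alpha,\beta}(mn,t)\,dt
\]
by the claimed error. I would represent $V_{\alpha,\beta}(mn,t)$ through its Mellin integral and shift the $s$-contour past $\real(s)=\varepsilon$ (taking care not to cross the pole at $s=0$, whose residue merely reproduces the diagonal already extracted), so that the effective support of $(m,n)$ is restricted to $mn\ll T^{2+\varepsilon}$. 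For each such pair I would integrate by parts in $t$, using $\tfrac{d}{dt}(hm/kn)^{-it}=-i\log(hm/kn)\,(hm/kn)^{-it}$ together with the derivative bounds $w^{(j)}\ll \Delta^{-j}$ and the mild $t$-dependence of $g_{\alpha,\beta}(s,t)$ and of $X_{\alpha,\beta,t}$, to gain a factor $(\Delta\log(hm/kn))^{-J}$ after $J$ iterations. Splitting the remaining sum according to the size of $|hm-kn|$ and inserting the Kim--Sarnak bound $|\lambda_f(n)|\le \tau(n)\,n^{\bm\theta}$ together with Cauchy--Schwarz should then yield the desired error.

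The main obstacle, and the reason for the specific exponents $(hk)^{(1+\bm\theta)/2}$ and $T^{1/2+\bm\theta+\varepsilon}$, is to balance the two regimes: small $|hm-kn|$, which forces $m/n$ to be close to $k/h$ and therefore limits the number of admissible pairs (producing the factor $(hk)^{(1+\bm\theta)/2}$), against moderate $|hm-kn|$, where the integration-by-parts saving yields the $T^{1/2}$ improvement over the trivial bound. Additional care is required during the contour shift to control the polynomial growth of $g_{\alpha,\beta}(s,t)$ on the shifted line, and to verify that $X_{\alpha,\beta,t}$ --- although a ratio of gamma factors --- has logarithmic-derivative size in $t$, so that it does not obstruct the repeated integration by parts in the second family of terms.
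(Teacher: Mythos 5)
Your setup is fine — applying Lemma~\ref{firstlemmaAFE}, interchanging sum and integral by absolute convergence, and separating the diagonal $hm=kn$ from the rest all match the paper's argument. The genuine gap is in the treatment of the off-diagonal contribution, which is the entire content of the lemma, and your proposed method there will not produce the claimed exponents.

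Your plan — restrict $mn\ll T^{2+\varepsilon}$ by a Mellin shift, then integrate by parts in $t$ to localize to $|hm/kn-1|\ll T^{-\gamma}$, then count the surviving pairs trivially with $|\lambda_f(n)|\le\tau(n)n^{\bm\theta}$ and Cauchy--Schwarz — is exactly what the paper calls the ``trivial bound for shifted convolution sums,'' namely $\sum_{\ell_1 m_1-\ell_2 m_2=\Delta}\lambda_f(m_1)\lambda_f(m_2)g(m_1,m_2)\ll_\varepsilon\min\{M_1,M_2\}(M_1M_2)^\varepsilon$, and the paper states explicitly that this route yields only $I_{h,k}^{\operatorname{ND}_1}(\alpha,\beta)\ll_\varepsilon\min\{h,k\}\,T^{1+\varepsilon}$, ``which is clearly not useful.'' Integration by parts localizes the shift $q=km-hn$ to a range of size $\ll T^{-\gamma}\sqrt{A_{l_1}A_{l_2}}$ but extracts \emph{no} cancellation among the coefficients $\lambda_f(m)\lambda_f(n)$ for fixed $q$, nor any cancellation as $q$ varies; trivial counting then re-absorbs essentially all of the localization gain. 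You cannot reach $T^{1/2+\bm\theta+\varepsilon}$ this way.

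The missing idea is a nontrivial estimate for \emph{shifted convolution sums on average over the shift}. The paper proceeds via Bernard's Lemmas~3, 4, 5 (which carry out your localization and a smooth dyadic decomposition, and establish the derivative bounds \eqref{testbound} on the test function $F_{q;l_1,l_2}$), and then applies Lemma~\ref{bernardblomerlemma}, Bernard's adaptation of Blomer's Theorem~2 \cite{blomer2}, which exhibits square-root cancellation \emph{simultaneously} in the $\lambda_f$'s and in the sum over $q\le H$ — this is where the Kuznetsov/spectral machinery enters, not integration by parts. It is precisely this input that improves the bound from $\min\{h,k\}T^{1+\varepsilon}$ (trivial), or $(hk)^{3/4+\bm\theta/2}T^{3/2+\varepsilon}$ (Ricotta), or $(hk)^{3/4+\bm\theta/2}T^{1/2+\varepsilon}$ (Blomer's individual-shift bound), to the stated $(hk)^{(1+\bm\theta)/2}T^{1/2+\bm\theta+\varepsilon}$. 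Your proposal needs to invoke this or an equivalent deep result; no elementary balancing of regimes will substitute.
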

\begin{proof}
This is proved by applying Lemma \ref{firstlemmaAFE} to the right-hand side above
\begin{align}
  \int_{ - \infty }^\infty w(t) & {{\left( {\frac{h}{k}} \right)}^{ - it}}L(f,\tfrac{1}{2} + \alpha  + it)L(f,\tfrac{1}{2} + \beta  - it)dt  \nonumber \\
	 &= \sum\limits_{m \ge 1} \sum\limits_{n \ge 1} {\frac{{{\lambda _f}(m){\lambda _f}(n)}}{{{m^{1/2 + \alpha }}{n^{1/2 + \beta }}}}} \int_{ - \infty }^\infty {{\left( {\frac{{hm}}{{kn}}} \right)}^{ - it}}w(t){V_{\alpha ,\beta }}(mn,t)dt   \nonumber \\
   &\quad + \sum\limits_{m \ge 1} \sum\limits_{n \ge 1} {\frac{{{\lambda _f}(m){\lambda _f}(n)}}{{{m^{1/2 - \beta }}{n^{1/2 - \alpha }}}}} \int_{ - \infty }^\infty {{\left( {\frac{{hm}}{{kn}}} \right)}^{ - it}}w(t){X_{\alpha ,\beta ,t}}(t){V_{ - \beta , - \alpha }}(mn,t)dt .  \nonumber 
\end{align}
Clearly the main terms appearing in the statement of the lemma are given by the diagonal case $hm=kn$. Let us now look at the off-diagonal terms. Following \cite{bernard} we set
\[I_{h,k}^{{{\operatorname{ND} }_1}}(\alpha ,\beta ) = \sum\limits_{hm \ne kn} {\frac{{{\lambda _f}(m){\lambda _f}(n)}}{{{m^{1/2 + \alpha }}{n^{1/2 + \beta }}}}} \int_{ - \infty }^\infty  {{{\left( {\frac{{hm}}{{kn}}} \right)}^{ - it}}w(t){V_{\alpha ,\beta }}(mn,t)dt} ,\]
and
\[I_{h,k}^{{{\operatorname{ND} }_2}}(\alpha ,\beta ) = \sum\limits_{hm \ne kn} {\frac{{{\lambda _f}(m){\lambda _f}(n)}}{{{m^{1/2 - \beta }}{n^{1/2 - \alpha }}}}} \int_{ - \infty }^\infty  {{{\left( {\frac{{hm}}{{kn}}} \right)}^{ - it}}w(t){X_{\alpha ,\beta ,t}}(t){V_{\alpha ,\beta }}(mn,t)dt} .\]
By \cite[Lemma 3]{bernard}, we have that for any $\varepsilon > 0$, $0 < \gamma < 1$ and for any real $A>0$
\[
I_{h,k}^{\operatorname{ND}_1}(\alpha ,\beta ) = \sum_{\substack{km \ne hn \\ mn \ll T^{2 + \varepsilon} \\ | \tfrac{hm}{kn} - 1| \ll T^{ - \gamma }}} \frac{{{\lambda _f}(m){\lambda _f}(n)}}{{{m^{1/2 + \alpha }}{n^{1/2 + \beta }}}} \int_{ - \infty }^\infty w(t){{\left( {\frac{{hm}}{{kn}}} \right)}^{ - it}}{V_{\alpha ,\beta }}(mn,t)dt + O(T^{-A}), 
\]
provided that $h,k \le T^{\nu}$ and $\alpha,\beta \ll L^{-1}$. Now fix an arbitrary smooth function $\rho: ]0,\infty[ \mapsto \R$, compactly supported in $[1,2]$ and with
\[
\sum_{l=-\infty}^{\infty} \rho(2^{-l/2}x) =1 .
\]
For each integer $l$, we define
\[
\rho_l (x) = \rho(x/A_l) \quad \textnormal{with} \quad A_l = 2^{l/2}T^{\gamma}.
\]
By \cite[Lemma 4]{bernard} one has
\[
I_{h,k}^{\operatorname{ND}_1}(\alpha ,\beta ) = \sum_{\substack{{A_{{l_1}}}{A_{{l_2}}} \ll hk{T^{2 + \varepsilon }} \\ {A_{{l_1}}} \asymp {A_{{l_2}}} \\ {A_{{l_1}}},{A_{{l_2}}} \gg {T^\gamma }}} \sum_{0 < |h| \ll {T^{ - \gamma }}\sqrt {{A_{{l_1}}}{A_{{l_2}}}}} \sum_{km - hn = q} {\lambda _f}(m){\lambda _f}(n){F_{q;{l_1},{l_2}}}(km,hn) + O({T^{ - A}}),
\]
where $h,k \le T^{\nu}$ are positive integers and $\gamma$ is as above. Here
\[
{F_{q;{l_1},{l_2}}}(x,y) = \frac{{{k^{1/2 + \alpha }}{h^{1/2 + \beta }}}}{{{x^{1/2 + \alpha }}{y^{1/2 + \beta }}}}{\rho _{{l_1}}}(x){\rho _{{l_2}}}(y)\int_{ - \infty }^\infty  {w(t){{\left( {1 + \frac{q}{h}} \right)}^{ - it}}{V_{\alpha ,\beta }}\left( {\frac{{xy}}{{hk}},t} \right)dt} .
\]
As mentioned in the introduction, the key aspect of the proof of this lemma relies on a strong result about shifted convolution sums on average due to Blomer, \cite[Theorem 2]{blomer2}. Fortunately, the tool needed from \cite{blomer2} can be quoted almost verbatim (a straightforward adaption is needed and it is supplied by Bernard in \cite[Theorem 3]{bernard}). The statement is as follows.
\begin{lemma}[Bernard, 2015] \label{bernardblomerlemma}
Let $l_1$, $l_2$, $H$ and $h_1$ be positive integers. Let $M_1, M_2, P_1$ and $P_2$ be real numbers greater than $1$. Let $\{g_n\}$ be a family of smooth functions supported in $[M_1,2M_1] \times [M_2,2M_2]$ with $||g_h^{(ij)}||_\infty \ll_{i,j} (P_1/M_1)^i(P_2/M_2)^j$ for all $i,j \ge 0$. Let $\{a(h)\}$ be a sequence of complex numbers such that
\[
a(h) \ne 0 \quad  \Rightarrow \quad h \le H,\quad h_1|h \quad \textnormal{and} \quad (h_1,h/h_1) = 1.
\]
If $l_1 M_1 \asymp l_2 M_2 \asymp A$ and if there exists $\varepsilon > 0$ such that
\[
H \ll \frac{A}{\max\{P_1,P_2\}} \frac{1}{(l_1l_2 M_1 M_2 P_1 P_2)^\varepsilon},
\]
then one has
\begin{align}
  &\sum_{h = 1}^H {a(h)} \sum\limits_{{m_1},{m_2} \ge 1} {{\lambda _f}({m_1})\overline {{\lambda _f}({m_2})} {g_h}({m_1},{m_2})}  \nonumber \\
   &\ll {A^{1/2}}h_1^\theta ||a|{|_2}{({P_1} + {P_2})^{3/2}}\bigg[ {\sqrt {{P_1} + {P_2}}  + {{\left( {\frac{A}{{\max \{ {P_1},{P_2}\} }}} \right)}^\theta }\bigg( {1 + \sqrt {\frac{{({h_1},{l_1}{l_2})H}}{{{h_1}{l_1}{l_2}}}} } \bigg)} \bigg] \nonumber \\
   &\quad \times ({l_1}{l_2}{M_1}{M_2}{P_1}{P_2}H)^\varepsilon,  \nonumber  
\end{align}
for all $\varepsilon > 0$.
\end{lemma}
The required bounds for the test function were established in \cite[Lemma 5]{bernard}. Specifically, let $\alpha,\beta \ll L^{-1}$ be complex numbers and let $\sigma$ be any positive real number. For all non-negative integers $i$ and $j$, we have
\begin{align} \label{testbound}
{x^i}{y^j}\frac{{{\partial ^{i + j}}}}{{\partial {x^i}\partial {y^j}}}{F_{q;{l_1},{l_2}}}(x,y) \ll _{i,j} {\left( {\frac{a}{{{A_{{l_1}}}}}} \right)^{1/2 + \real(\alpha)  + \sigma }}{\left( {\frac{b}{{{A_{{l_2}}}}}} \right)^{1/2 + \real(\beta)  + \sigma }}{T^{1 + 2\sigma }}{\log ^j}T,
\end{align}
where the implicit constant does not depend on $q$. The trivial bound for shifted convolution sums would have yielded
\[
\sum_{\ell_1 m_1 - \ell_2 m_2 = \Delta} \lambda_f(m_1)\lambda_f(m_2)g_h(m_1,m_2) \ll_\varepsilon \min \{M_1,M_2\} (M_1 M_2)^\varepsilon,
\]
so that when we combine this with \eqref{testbound} we get
\[
I_{h,k}^{\operatorname{ND}_1}(\alpha ,\beta) \ll_\varepsilon \min \{h,k\} T^{1+\varepsilon},
\]
which is clearly not useful. As explained by Bernard, using \cite[Theorem 6.3]{ricotta} would give
\[
I_{h,k}^{\operatorname{ND}_1}(\alpha ,\beta) \ll_\varepsilon \min (hk)^{3/4+\bf{\theta}/2}T^{3/2+\varepsilon+\varepsilon}.
\]
If instead of the trivial bound we now use \cite[Theorem 1.3]{blomer1} along with \eqref{testbound}, then
\[
I_{h,k}^{\operatorname{ND}_1}(\alpha ,\beta) \ll_\varepsilon \min (hk)^{3/4+\bf{\theta}/2}T^{1/2+\varepsilon+\varepsilon},
\]
see \cite[pp. 215-217]{bernard} for further details. It is only by using Lemma \ref{bernardblomerlemma} with $H = T^{-\gamma}\sqrt{A_{l_1}A_{l_2}}$, $h_1=1$ and
\begin{equation}
a(h) = \begin{cases}
1, & \mbox{ if } h \le H, \nonumber \\
0, & \mbox{ otherwise}, \nonumber
\end{cases}
\end{equation}
as well as the previous results on $F_{q;l_1,l_2}$ that we get
\begin{align} \label{bernardblomer}
I_{h,k}^{{{\operatorname{ND} }_1}}(\alpha ,\beta ) \ll_\varepsilon {(hk)^{(1 + \bm{\theta} )/2}}{T^{1/2 + \bm{\theta}  + \varepsilon }}.
\end{align}
Similarly, one has
\[
I_{h,k}^{{{\operatorname{ND} }_2}}(\alpha ,\beta ) \ll_\varepsilon {(hk)^{(1 + \bm{\theta} )/2}}{T^{1/2 + \bm{\theta}  + \varepsilon }},
\]
and this can be shown by a similar argument.
\end{proof}
\begin{remark}
Specifically we shall use the pole annihilator
\[
G(s) = e^{s^2}p(s) \quad \textnormal{and} \quad p(s) = \frac{{{{(\alpha  + \beta )}^2} - {{(2s)}^2}}}{{{{(\alpha  + \beta )}^2}}}.
\]
The function $G(s)$ can be chosen from a wide class of functions. This choice is taken from \cite{bcy}. All that is needed is that $G$ should have rapid decay and that it vanishes at $s = \pm \frac{\alpha+\beta}{2}$.
\end{remark}
The following lemma, which is an adaption of the approximate functional equation, is needed for the computation of the term "crossterms" $I_{\ell,\ell+1}$.
\begin{lemma} \label{lemmasigma}
Let $\sigma_{\alpha,-\beta}(f,l) = \sum_{ab=l}a^{-\alpha}b^{\beta}\lambda_f(a)\lambda_f(b)$. For $L^2 \le |t| \le 2T$ we have
\[
L(f, \tfrac{1}{2} + \alpha + it) L(f, \tfrac{1}{2} - \beta + it) = \sum_{\ell = 1}^{\infty} \frac{\sigma_{\alpha, - \beta}(f, \ell)}{\ell^{1/2+it}} e^{- \ell/T^{6}} + O(T^{-1 + \epsilon})
\]
uniformly for $\alpha,\beta < L^{-1}$.
\end{lemma}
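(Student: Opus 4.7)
The plan is to represent the smoothed Dirichlet sum as a contour integral in the Mellin variable dual to $e^{-\ell/T^{6}}$, shift past a pole at $s=0$ to extract the product of $L$-values as a residue, and estimate the resulting shifted integral using the convexity bound for $L(f,\cdot)$. Concretely, starting from the Mellin identity $e^{-y}=\frac{1}{2\pi i}\int_{(c)}\Gamma(s)y^{-s}\,ds$ valid for any $c>0$, substituting $y=\ell/T^{6}$ and swapping sum and integral (justified for $c>\tfrac{1}{2}$ by absolute convergence of the Dirichlet series $\sum_{\ell}\sigma_{\alpha,-\beta}(f,\ell)\ell^{-w}=L(f,w+\alpha)L(f,w-\beta)$ when $\real(w)>1$) gives
\begin{align*}
\sum_{\ell\ge 1}\frac{\sigma_{\alpha,-\beta}(f,\ell)}{\ell^{1/2+it}}e^{-\ell/T^{6}}=\frac{1}{2\pi i}\int_{(1)}\Gamma(s)T^{6s}L(f,\tfrac{1}{2}+\alpha+it+s)L(f,\tfrac{1}{2}-\beta+it+s)\,ds.
\end{align*}

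Next, since $f$ is a primitive cusp form, $L(f,\cdot)$ is entire, so the only pole of the integrand in the strip $-\tfrac{1}{2}+\varepsilon\le\real(s)\le 1$ is the simple pole of $\Gamma(s)$ at $s=0$, whose residue equals $L(f,\tfrac{1}{2}+\alpha+it)L(f,\tfrac{1}{2}-\beta+it)$, the claimed main term. Shifting the contour to $\real(s)=-\tfrac{1}{2}+\varepsilon$ reduces the proof to bounding
\begin{align*}
E:=\frac{1}{2\pi i}\int_{(-1/2+\varepsilon)}\Gamma(s)T^{6s}L(f,\tfrac{1}{2}+\alpha+it+s)L(f,\tfrac{1}{2}-\beta+it+s)\,ds.
\end{align*}
On this line the convexity bound for a degree-$2$ $L$-function yields $|L(f,\varepsilon+iu)|\ll(|u|+1)^{1+\varepsilon}$, while $|T^{6s}|=T^{-3+6\varepsilon}$ and $|\Gamma(s)|$ decays exponentially in $|\imag(s)|$ by Stirling. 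Using $|t|\le 2T$, this gives
\begin{align*}
|E|\ll T^{-3+6\varepsilon}\int_{-\infty}^{\infty}|\Gamma(-\tfrac{1}{2}+\varepsilon+iy)|(|t+y|+2)^{2+\varepsilon}\,dy\ll T^{-1+O(\varepsilon)},
\end{align*}
which is the asserted error term.

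The argument is essentially routine; the only tight point is that the $T^{-3}$ gain from the contour shift has to overcome the $T^{2}$ loss from applying convexity twice, leaving only a modest $T^{-1+\varepsilon}$ margin. Uniformity in $\alpha,\beta\ll L^{-1}$ is automatic because these small shifts are absorbed into $\varepsilon$ in the convexity bound. The lower bound $|t|\ge L^{2}$ plays no role in the contour argument itself but rather ensures that the error $T^{-1+\varepsilon}$ is dominated by the main term in the intended application. If additional room were ever needed, one could shift further to $\real(s)=-1+\varepsilon$, picking up only the negligible residue at $s=-1$ of size $T^{-6}$ times another product of $L$-values, without producing any new genuine main term.
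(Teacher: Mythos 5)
Your proof is correct and is essentially the same argument as the one behind the paper's citation: the paper defers to Lemma 4.1 of Bui--Conrey--Young, which is proved by exactly this Mellin-transform contour shift combined with Stirling decay of $\Gamma$ and the convexity bound for the $L$-function on the line $\real = \varepsilon$.
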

\begin{proof}
See Lemma 4.1 of \cite{bcy} and \cite{iwanieckowalski} for the appropriate bounds.
\end{proof}
\begin{lemma} \label{eulermaclaurinlemma}
Let $\{f(m)\}_{m\in\N}$ be a sequence of complex numbers and suppose that
 \begin{align}
  \sum_{m\leq M} f(m) = cM +O(M^{3/5}) 
 \end{align}
as $M \to \infty$ for some $c\in\C$. We then have for $k\in\N$ 
  \begin{align}
  \label{eq:Lemma8_ugly1}
  \sum_{m\leq M} (f^{*k})(m) = c^k M \frac{\log^{k-1}M}{k!} +O(M\log^{k-2} M )
 \end{align}
as $M \to \infty$. Furthermore, we have
  \begin{align}
  \label{eq:Lemma8_ugly21}
  \sum_{m\leq M} \frac{(f^{*k})(m) }{m} =  c^k \frac{\log^{k}M}{k!k} +O(\log^{k-1}M)
 \end{align}
as $M \to \infty$. Let $n \ge 1$ and $M' \ge 1$ with $\log M' \asymp \log M$ be given.  We then have uniformly in $\gamma$ for all $|\gamma|\le 1/2$ 
 \begin{align}
  \label{eq:Lemma8_ugly23}
  \sum_{m\leq M} (f^{*k})(m) \frac{(\log\frac{M'}{m})^n}{m} \left(\frac{M'}{m}\right)^{-\gamma}
  =&\, 
  \frac{c^k M^{-\gamma} \log^k M}{k!} \int_{0}^{1} M^{\gamma r} r^{k-1}(\log(M'/M^r))^{n}\, dr \nonumber \\
   &+ O ((\log M)^{k+n-1} ), 
 \end{align}
as $M \to \infty$, and the expression in  \eqref{eq:Lemma8_ugly23} has order of magnitude $(\log M)^{k+n}$ if $c\neq 0$. 
\end{lemma}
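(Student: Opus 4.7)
The plan is to prove the first assertion \eqref{eq:Lemma8_ugly1} by induction on $k$, then to deduce \eqref{eq:Lemma8_ugly21} and \eqref{eq:Lemma8_ugly23} from it by Abel summation. Write $S_k(M) := \sum_{m \le M} f^{*k}(m)$.

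For the induction on \eqref{eq:Lemma8_ugly1}, the base case $k=1$ is the hypothesis, absorbing $M^{3/5}$ into $O(M/\log M)$. For the inductive step, I would use $f^{*k} = f \ast f^{*(k-1)}$ and swap the order of summation to obtain
\[
S_k(M) = \sum_{e \le M} f^{*(k-1)}(e)\,S_1(M/e) = cM\sum_{e \le M}\frac{f^{*(k-1)}(e)}{e} + \sum_{e \le M} f^{*(k-1)}(e)\,O\bigl((M/e)^{3/5}\bigr).
\]
The main term is handled by invoking the inductive version of \eqref{eq:Lemma8_ugly21}, which supplies the required asymptotic for $\sum_{e \le M} f^{*(k-1)}(e)/e$. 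The error term requires a bound on $\sum_{e\le M} |f^{*(k-1)}(e)|\,e^{-3/5}$, which one obtains by Abel summation from an inductively available absolute estimate for $\sum_{e \le M}|f^{*(k-1)}(e)|$; in the applications we have in mind, the Deligne bound \eqref{ramanujanhypothesis} gives $|f(m)| \ll m^\varepsilon$ for $f$ equal to $\lambda_f$ or $\mu_{f,\ell}$, which makes this step routine.

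Given \eqref{eq:Lemma8_ugly1} at level $k$, assertion \eqref{eq:Lemma8_ugly21} follows by the Abel summation identity $\sum_{m\le M} f^{*k}(m)/m = S_k(M)/M + \int_1^M S_k(u)/u^2\,du$, with the boundary term contributing $O(\log^{k-1}M)$ and the integral yielding the claimed main term after evaluating $\int_1^M \log^{k-1}u \cdot du/u$. Assertion \eqref{eq:Lemma8_ugly23} is the same manoeuvre in Riemann--Stieltjes form: I would write the sum as $\int_1^M h(u)\,dS_k(u)$ with $h(u) = u^{-1}(\log(M'/u))^n(M'/u)^{-\gamma}$, insert the asymptotic for $S_k(u)$, and then substitute $u = M^r$ so that the main contribution becomes the stated integral over $r\in[0,1]$. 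Uniformity in $\gamma$ for $|\gamma|\le 1/2$ follows since $(M'/u)^{-\gamma}$ is bounded on $1\le u \le M$ under the hypothesis $\log M'\asymp\log M$, and the implicit constants in the asymptotic for $S_k$ do not depend on $\gamma$.

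The main obstacle I anticipate is the error-term bookkeeping through the induction: each convolution supplies only one additional logarithm, so the several error contributions arising from the hypothesis on $S_1$ and from the inductive hypothesis on $S_{k-1}$ must be combined to fit within $O(M\log^{k-2}M)$ rather than the weaker $O(M\log^{k-1}M)$. A cleaner alternative, which I would fall back on if the direct induction becomes cumbersome, is to apply Perron's formula: by partial summation the hypothesis gives $F(s) := \sum f(m)m^{-s} = c/(s-1) + h(s)$ with $h$ holomorphic in $\real(s)>3/5$, and shifting the Perron contour past the order-$k$ pole of $F(s)^k$ at $s=1$ yields \eqref{eq:Lemma8_ugly1} directly; \eqref{eq:Lemma8_ugly21} and \eqref{eq:Lemma8_ugly23} then follow from the same contour by inserting the weights $1/s$ and a Mellin transform of $h$, respectively.
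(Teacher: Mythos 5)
Your overall plan---induction on $k$ for \eqref{eq:Lemma8_ugly1} via the decomposition $f^{*k}=f^{*(k-1)}*f$, followed by Abel (Riemann--Stieltjes) summation to deduce \eqref{eq:Lemma8_ugly21} and \eqref{eq:Lemma8_ugly23}---is the same route the paper takes, and your main-term computation via \eqref{eq:Lemma8_ugly21} at level $k-1$ is equivalent to the paper's inline partial summation, provided you run a simultaneous induction.

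The genuine gap is in the error term of the inductive step. You propose to control $\sum_{e\le M}|f^{*(k-1)}(e)|\,e^{-3/5}$ via the Deligne bound $|f(m)|\ll m^\varepsilon$. But this pointwise bound only gives $|f^{*(k-1)}(m)|\ll_\varepsilon m^\varepsilon$, hence $\sum_{e\le M}|f^{*(k-1)}(e)|\,e^{-3/5}\ll M^{2/5+\varepsilon}$, and after multiplying by $M^{3/5}$ you get an error of $O(M^{1+\varepsilon})$. Since the target main term in \eqref{eq:Lemma8_ugly1} is only of size $\asymp M\log^{k-1}M$, this ``error'' dominates the main term: the argument proves nothing. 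What is actually needed is the much sharper $\sum_{e\le M}|f^{*(k-1)}(e)|\ll M\log^{k-2}M$, which cannot come from a pointwise $m^\varepsilon$ bound; it comes from the inductive hypothesis \eqref{eq:Lemma8_ugly1} at level $k-1$ combined with non-negativity of $f$, so that $|f^{*(k-1)}|=f^{*(k-1)}$ and the absolute and signed partial sums coincide. This is exactly what the paper uses: it applies partial summation to $\sum_{e\le M}f^{*(k-1)}(e)\,e^{-3/5}$, plugs in the level-$(k-1)$ asymptotic for $\sum_{e\le m}f^{*(k-1)}(e)$, and obtains $\ll M^{2/5}\log^{k-2}M$, hence the desired $O(M\log^{k-2}M)$. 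The Deligne bound is a red herring here (and note that in the intended application one has $f=\lambda_f^2\ge 0$, not $f=\lambda_f$ or $\mu_{f,\ell}$, which is precisely why positivity is available).

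Your Perron fallback is also not as quick as claimed: the hypothesis gives $F(s)-cs/(s-1)$ holomorphic for $\real(s)>3/5$ with growth $O(|s|/(\sigma-3/5))$, so the contour shift can in principle be arranged, but a truncated Perron argument still needs control of the truncation error, which again requires absolute bounds on $f^{*k}(m)$ that go beyond the stated summatory hypothesis; this does not route around the issue above.
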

\begin{proof}
We first prove \eqref{eq:Lemma8_ugly1}. We do this by induction over $k$. 
For $k=1$, this is trivial. We thus assume \eqref{eq:Lemma8_ugly1} is true for $k-1$. 
To simplify the notation, we write $g(m):=(f^{*k-1})(m)$ and get
 \begin{align}
  \sum_{m\leq M} (f^{*k})(m)
  &=
  \sum_{m\leq M} (g*f)(m)
  = 
  \sum_{m\leq M} \sum_{d|n} g(d)f(m/d) = \sum_{ab\leq M}  g(a)f(b) \nonumber\\
  &= \sum_{a \le M}  g(a)  \sum_{b \le M/a}f(b)  =
  \sum_{a\leq M}  g(a)\bigg( \frac{cM}{a} +O\bigg(\bigg(\frac{M}{a}\bigg)^{3/5}\bigg) \bigg).
  \label{eq:Lemma8_ugly1_proof1}
 \end{align}
 We first consider the main term. We use partial summation and get
  \begin{align*}
  \sum_{a\leq M}  g(a)\frac{cM}{a}
  &=
  cM  \sum_{a\leq M}   \frac{1}{a}g(a)
  =
  cM \bigg(1/M \sum_{a\leq M} g(a) -  \sum_{m\leq M-1} \bigg(\sum_{a\leq m} g(a)\bigg)\left(\frac{1}{m+1}-\frac{1}{m}\right)	\bigg)\\
  &=
  O(M\log^{k-2} M)+ cM\sum_{m\leq M-1} \bigg(c^{k-1} m\frac{\log^{k-2} m}{(k-1)!} +O(m\log^{k-3}m ) \bigg)\frac{1}{m(m+1)}\\
  &=
  \frac{c^kM}{(k-1)!}\sum_{m\leq M-1} \frac{\log^{k-2} m}{m+1}  + O\bigg(M\log^{k-2} M+  M\sum_{m\le M-1} \frac{\log^{k-3}m}{m} \bigg)\\
    &=
  c^k M\frac{\log^{k-1}M}{k!} +O(M\log^{k-2} M ).
 \end{align*}
It remains to show that the error term in \eqref{eq:Lemma8_ugly1_proof1} is of lower order.
We have
   \begin{align*}
  \sum_{a\leq M}   g(a)O\left(\frac{M}{a}\right)^{3/5}
  &=
  O\bigg(M^{3/5}  \sum_{a\le M}   \frac{1}{a^{3/5}}g(a)\bigg)\\
  &=
  M^{3/5} O \bigg( \frac{1}{M^{3/5}} \sum_{a\leq M} g(a) -  \sum_{m\leq M-1} \bigg(\sum_{a\leq m} g(a)\bigg)\bigg(\frac{1}{(m+1)^{3/5}}-\frac{1}{m^{3/5}}\bigg)\bigg)\\
  &=
  O(M\log^{k-2} M )+ O \bigg( \sum_{m\leq M-1}m\log^{k-2} m \cdot \frac{1}{m^{6/5}}\bigg) 
  =
  O(M\log^{k-2} M ).
 \end{align*}
This completes the proof of \eqref{eq:Lemma8_ugly1}. 
The proof of \eqref{eq:Lemma8_ugly21} is almost the same as \eqref{eq:Lemma8_ugly23} and we thus prove only \eqref{eq:Lemma8_ugly23}. 
We first consider the case $\gamma=0$ and get
 \begin{align}
  \label{eq:Lemma8_ugly3}
  \sum_{m\leq M} \frac{(\log\frac{M'}{m})^n}{m} (f^{*k})(m) 
  &=
  - 
  \sum_{m\leq M-1} \bigg(\sum_{a\leq m} (f^{*k})(a) \bigg)    \left(\left(\frac{\log^{n}(M'/(m+1))}{m+1}\right)-\left(\frac{\log^{n}(M'/m))}{m}\right)\right).
 \end{align}
Note that
\begin{align*}
 \frac{\log^{n}(M'/(m+1))}{m+1}
 &=
 \frac{(\log(M') -\log(m+1))^n}{m} \frac{1}{1+1/m}\\
 &=
 \frac{(\log(M') - \log(m)-\log(1+1/m))^n}{m} \left(1-\frac{1}{m}+ O\left(\frac{1}{m^2}\right) \right)\\
 &=
  \frac{(\log(M'/m) -1/m+ O(1/m^2))^n}{m} \left(1-\frac{1}{m}+ O\left(\frac{1}{m^2}\right) \right)\\
  &=
 \frac{(\log(M'/m))^n}{m} - \frac{1}{m^2} (\log(M'/m))^{n}- \frac{n}{m^2} (\log(M'/m))^{n-1} +O\left(\frac{\log^{n-1}M'}{m^3}\right).
\end{align*}
Inserting this and \eqref{eq:Lemma8_ugly1} in \eqref{eq:Lemma8_ugly3} gives
 \begin{align*}
  \sum_{m\leq M-1} &\bigg(c^k m\frac{\log^{k-1}m}{k!} +O(m\log^{k-2} m ) \bigg)  \nonumber \\  
  &\quad \times\left(\frac{1}{m^2} \bigl(\log(M'/m)\bigr)^{n}+\frac{n}{m^2} \bigl(\log(M'/m)\bigr)^{n-1} +O\left(\frac{\log^{n-1}M'}{m^3}\right)\right).
 \end{align*}
Applying Euler-Maclaurin summation to the leading term yields
\begin{align*}
 \frac{c^k}{k!} \sum_{m\le M-1} (\log m)^{k-1} (\log(M'/m))^{n}
 &=
  \frac{c^k}{k!} \int_{1}^{M}\frac{1}{y} (\log y)^{k-1} (\log(M'/y))^{n} dy + O(\log^{k+n-1} M )\\
 &=
 \frac{c^k}{k!} (\log M)^k \int_{0}^{1} (\log y)^{k-1} (\log(M'/M^r))^{n} dr + O(\log^{k+n-1}M).
 \end{align*}
Using the variable substitution $y=M^r$ supplies the main term in \eqref{eq:Lemma8_ugly23}.
Applying Euler-Maclaurin summation to the remaining terms, one sees immediately that they are $O(\log^{k+n-1}M)$.  
This completes the proof of \eqref{eq:Lemma8_ugly23}. The argumentation for $\gamma\neq 0$ is almost identical and requires that
 \begin{align*}
 \frac{1}{(m+1)^{1-\gamma}} 
 = 
 \frac{1}{m^{1-\gamma}}  \frac{1}{(1+1/m)^{1-\gamma}} 
 =
 \frac{1}{m^{1-\gamma}} \bigg(1-(1-\gamma)\frac{1}{m} +O(1/m^2)\bigg),
 \end{align*}
with $O(1/m^2)$ uniform in $\gamma$ for  $|\gamma|\leq1/2$. This completes the proof of the lemma.
\end{proof}
This lemma can be upgraded to read like Lemma 3.3 of \cite{rrz01} and Lemma 4.4 of \cite{bcy} by incorporating smooth functions $F$ and $H$. Our choice of $c$ will be 
\begin{align} \label{constanrankin}
c = \frac{\operatorname{Res}_{s=1} L(f \otimes f,s)}{\zeta^{(N)}(2)},
\end{align}
as per the asymptotic behavior 
\begin{align}
 \label{eq:sum_la_2_f}
\sum_{n \le x} \lambda_f^2(n) = x \frac{\operatorname{Res}_{s=1} L(f \otimes f,s)}{\zeta^{(N)}(2)} + O(x^{3/5}),
\end{align}
as $x \to \infty$, found by Rankin \cite{rankin}.
\section{Evaluation of the shifted mean value integrals $I_{\star}(\alpha,\beta)$}
\subsection{The mean value integral $I_{\ell,\ell}(\alpha,\beta)$}
\noindent The strategy is to insert the definition of $\psi_\ell$ into the mean value integral $I_{\ell,\ell}$ and then compute this integral by using the tools we have developed. 
One key aspect will be the evaluation of a certain arithmetical sum into a ratio of $L$-functions. 
This has the effect of transforming the problem from an arithmetical one to an analytic counterpart. Using \eqref{generalmollifier} on $I_{\ell,\ell}$, we obtain
\begin{align}
  {I_{\ell,\ell}}(\alpha ,\beta ) &= \int_{ - \infty }^\infty  {w(t)L(f,\tfrac{1}{2} + \alpha  + it)L(f,\tfrac{1}{2} + \beta  - it)\overline {{\psi _\ell}} {\psi _{\ell }}({\sigma _0} + it)dt}  \nonumber \\
   &= \int_{ - \infty }^\infty  {w(t)\chi _f^{\ell-1}(\tfrac{1}{2} - it)\chi _f^{\ell-1}(\tfrac{1}{2} + it)L(f,\tfrac{1}{2} + \alpha  + it)L(f,\tfrac{1}{2} + \beta  - it)}  \nonumber \\
   &\quad \times \sum\limits_{{h_1}{k_1} \le {M_{\ell}}} {\frac{{{\mu_{f,\ell}}({h_1}){\lambda_f^{* \ell - 1}}({k_1})}}{{h_1^{1/2 - it}k_1^{1/2 + it}}}} {P_\ell}[{h_1}{k_1}]\sum\limits_{{h_2}{k_2} \le {M_{\ell }}} {\frac{{{\mu _{f,\ell}}({h_2})\lambda_f^{* \ell - 1}({k_2})}}{{h_2^{1/2 + it}k_2^{1/2 - it}}}} {P_{\ell }}[{h_2}{k_2}]dt \nonumber \\
   &= 
   \sum\limits_{{h_1}{k_1} \le {M_{\ell}}} {\sum\limits_{{h_2}{k_2} \le {M_{\ell }}} {\frac{{{\mu_{f,\ell}}({h_1}){\mu _{f,\ell}}({h_2}){\lambda_f^{* \ell - 1}}({k_1})\lambda_f^{* \ell - 1}({k_2})}}{{{{({h_1}{h_2}{k_1}{k_2})}^{1/2}}}}} } {P_\ell}[{h_1}{k_1}]{P_{\ell }}[{h_2}{k_2}]{J_{2,f}}, \nonumber  
\end{align}
where
\[
{J_{2,f}} = \int_{ - \infty }^\infty  {w(t){{\left( {\frac{{{h_2}{k_1}}}{{{h_1}{k_2}}}} \right)}^{ - it}}L(f,\tfrac{1}{2} + \alpha  + it)L(f,\tfrac{1}{2} + \beta  - it)dt} ,
\]
since $\chi_f(\tfrac{1}{2}+it)\chi_f(\tfrac{1}{2}-it)=1$ for all values of $t$.
We now use Lemma \ref{firstlemmaAFE2} and rewrite ${J_{2,f}}$ as
 \begin{align}
 \label{eq:spliting_I_ahorrible1}
 {J_{2,f}} 
 =&
  \sum\limits_{h_2k_1m = h_1k_2n} {\frac{{{\lambda _f}(m){\lambda _f}(n)}}{{{m^{1/2 + \alpha }}{n^{1/2 + \beta }}}}\int_{ - \infty }^\infty  {w(t){V_{\alpha ,\beta }}(mn,t)dt} } \\
   &\quad + \sum\limits_{h_2k_1m = h_1k_2n} {\frac{{{\lambda _f}(m){\lambda _f}(n)}}{{{m^{1/2 - \beta }}{n^{1/2 - \alpha }}}}
   \int_{ - \infty }^\infty  {w(t){V_{ - \beta , - \alpha }}(mn,t){X_{\alpha ,\beta ,t}}dt} }  \nonumber\\
   &\quad + {O_{\varepsilon }}(  {(h_2k_1 h_1k_2)^{(1 + \bm{\theta} )/2}}{T^{1/2 + \bm{\theta}  + \varepsilon }}) \nonumber, 
\end{align}
with $V_{\alpha,\beta}$, $V_{ - \beta , - \alpha }$ and $X_{\alpha ,\beta ,t}$ as in Lemma \ref{firstlemmaAFE2}. 
This means that we can write $I_{\ell,\ell}(\alpha,\beta)=I'_{\ell,\ell}(\alpha,\beta)+I''_{\ell,\ell}(\alpha,\beta) + E(h_1,h_2,k_1,k_2)$, where $E$ is the error term above. 
Note that $I''_{\ell,\ell}(\alpha,\beta)$ can be obtained from $I'_{\ell,\ell}(\alpha,\beta)$ by switching $\alpha$ by $-\beta$ and multiplying by
\begin{align}
X_{\alpha,\beta,t} = \bigg( \frac{t\sqrt{N}}{2\pi}\bigg)^{-2(\alpha+\beta)}\bigg(1 + \frac{i(\alpha^2-\beta^2)}{t} + O(t^{-2}) \bigg), \quad \bigg(\frac{t\sqrt{N}}{2\pi}\bigg)^{-2(\alpha+\beta)} = T^{-2(\alpha+\beta)} + O(1/L), 
\end{align}
which implies that if $w_N(t)=w(t)(\frac{t\sqrt{N}}{2\pi})^{-2(\alpha+\beta)}$, then $\widehat{w}_N(0)=T^{-2(\alpha+\beta)} \widehat{w}(0) + O(T/L)$, 
see \cite[Lemma 2 and p. 229]{bernard}. 
We are left with $I''_{\ell,\ell}(\alpha,\beta) = T^{-2(\alpha + \beta)} I'_{\ell,\ell}(-\beta, -\alpha) + O(T/L)$.\\

We now estimate the error terms. We begin with the case $\ell =1$. With $\mu_{f}(h) \ll 1$ we get
\begin{align}
  \sum_{h_1k_1 \le M_1} \sum_{h_2 k_2 \le M_1} &{\frac{{{\mu _{f }}({h_1})\lambda _f^{ * 0}({k_1}){\mu _{f }}({h_2})\lambda _f^{ * 0}({k_2})}}{{{{({h_1}{k_1}{h_2}{k_2})}^{1/2}}}}}E(h_1,h_2,k_1,k_2) \nonumber \\
  &=
   \sum_{h_1 \le M_1} \sum_{h_2 \le M_1} {\frac{{{\mu _{f,\ell }}({h_1}){\mu _{f,\ell }}({h_2})}}{{{{({h_1}{h_2})}^{1/2}}}}}E(h_1,h_2,1,1) \nonumber \\
  &\ll
   \sum_{h_1 \le M_1} \sum_{h_2 \le M_1} \left|{\frac{{{\mu _{f,\ell }}({h_1}){\mu _{f,\ell }}({h_2})}}{{{{({h_1}{h_2})}^{1/2}}}}}\right| {T^{1/2 + \bm{\theta}  + \varepsilon }}{({h_1}{h_2})^{(1 + \bm{\theta} )/2}} \nonumber \\
   &\ll
   {T^{1/2 + \bm{\theta}  + \varepsilon }}\sum_{h_1 \le M_1} \sum_{h_2 \le M_1} {({h_1}{h_2})^{\bm{\theta} /2}}
   \ll
   T^{1/2 + \bm{\theta}  + \varepsilon } \bigl(M_1^{\bm{\theta} /2+1} \bigr)^2 \nonumber \\
    &\ll
    T^{1/2 + \bm{\theta}  + \varepsilon + \nu_1(\bm{\theta}+2)}
  \end{align}  
where we have used $M_1 = T^{\nu_{1}}$. This now has to be $O(T^{1-\varepsilon})$ and thus we require 
\begin{align} \label{boundonnu1}
\nu_1 < \frac{1/2-\bm{\theta}}{2+\bm{\theta}} = \frac{1-2\bm{\theta}}{4+2\bm{\theta}}.
\end{align}
Next, we estimate the error terms for $\ell\geq 2$. 
We know that
\[
\mu_{f}(h) \ll 1 \quad \textnormal{and} \quad  |\lambda _f(k)| \leq \tau(k) k^{\bm{\theta}}\ll k^{\bm{\theta}+\varepsilon}
\]
with $\bm{\theta} =0$ if we use \eqref{ramanujanhypothesis} and $\bm{\theta} =7/64$ if we use \eqref{kimsarnak}. Induction over $\ell$ then gives for all $\ell\geq 2$
  \begin{align} 
  \label{criticalbounds}
\mu_{f,\ell }(h) \ll_\ell h^\varepsilon, \quad \textnormal{and} \quad \lambda _f^{ * (\ell-1) }(k) \ll_\ell k^{\bm{\theta}+\varepsilon}.
\end{align} 
Using these bounds yields
\begin{align}
  &\sum_{h_1k_1 \le M_\ell} \sum_{h_2 k_2 \le M_\ell} {\frac{{{\mu _{f,\ell }}({h_1})\lambda _f^{ * \ell  - 1}({k_1}){\mu _{f,\ell }}({h_2})\lambda _f^{ * \ell  - 1}({k_2})}}{{{{({h_1}{k_1}{h_2}{k_2})}^{1/2}}}}}E(h_1,h_2,k_1,k_2) \nonumber \\
  &\ll \sum\limits_{{h_1}{k_1} \le {M_\ell }} \sum\limits_{{h_2}{k_2} \le {M_\ell }} \left|{\frac{{{\mu _{f,\ell }}({h_1})\lambda _f^{ * \ell  - 1}({k_1}){\mu _{f,\ell }}({h_2})\lambda _f^{ * \ell  - 1}({k_2})}}{{{{({h_1}{k_1}{h_2}{k_2})}^{1/2}}}}} \right| {T^{1/2 + \bm{\theta}  + \varepsilon }}{({h_1}{k_1}{h_2}{k_2})^{(1 + \bm{\theta} )/2}} \nonumber \\
  & = {T^{1/2 + \bm{\theta}  + \varepsilon }}\sum\limits_{{h_1}{k_1} \le {M_\ell }} \sum\limits_{{h_2}{k_2} \le {M_\ell }} \left|{{\mu _{f,\ell }}({h_1})\lambda _f^{ * \ell  - 1}({k_1}){\mu _{f,\ell }}({h_2})\lambda _f^{ * \ell  - 1}({k_2})} \right|	 {({h_1}{k_1}{h_2}{k_2})^{\bm{\theta} /2}} \nonumber \\
  & = T^{1/2 + \bm{\theta}  + \varepsilon } \bigg( \sum\limits_{hk \le M_\ell } \left| \mu _{f,\ell }(h) \lambda_f^{ * \ell  - 1}(k) (hk)^{\bm{\theta} /2}\right| \bigg)^2 \nonumber \\
  & \ll_\ell T^{1/2 + \bm{\theta}  + \varepsilon }\bigg( {\sum\limits_{hk \le {M_\ell }} {{h^{\bm{\theta} /2+\varepsilon}}{k^{3\bm{\theta} /2 + 2\varepsilon }}} } \bigg)^2 
    \ll_\ell T^{1/2 + \bm{\theta}  + \varepsilon }\bigg( {\sum\limits_{hk \le {M_\ell }} {{h^{\bm{3\theta} /2+2\varepsilon}}{k^{3\bm{\theta} /2 + 2\varepsilon }}} } \bigg)^2 
  \nonumber \\
  & = {T^{1/2 + \bm{\theta}  + \varepsilon }}{\bigg( \tau(M_\ell) M_\ell^{3\bm{\theta} /2 + 1+2\varepsilon} \bigg)^2} 
   \ll {T^{1/2 + \bm{\theta}  + \varepsilon }}{(M_\ell ^{3\bm{\theta} /2 + 1 + 3\varepsilon })^2} = {T^{1/2 + \bm{\theta}  + \varepsilon  + }}{T^{2{\nu _\ell }(3\bm{\theta} /2 + 1 + 3\varepsilon )}}, \nonumber 
\end{align} 
In order that this error be $O(T^{1-\varepsilon})$, we need
\begin{align} \label{boundonnul}
\nu_\ell < \frac{1/2-\bm{\theta}}{2+3\bm{\theta}} = \frac{1-2\bm{\theta}}{4+6\bm{\theta}}.
\end{align}
where we have used $M_\ell = T^{\nu_{\ell}}$.\\

By employing the Mellin representation of the polynomial $P_{\ell}$, i.e.
\begin{align} \label{mellinpoly}
{P_\ell }[{h_1}{k_1}] = \sum\limits_{i = 0}^{\deg {P_\ell }} {\frac{{{a_{\ell ,i}}}}{{{{\log }^i}{M_\ell }}}{{(\log {M_\ell }/{h_1}{k_1})}^i}}  = \sum\limits_i {\frac{{{a_{\ell ,i}}i!}}{{{{\log }^i}{M_\ell }}}} \frac{1}{{2\pi i}}\int_{(1)} {{{\left( {\frac{{{M_\ell }}}{{{h_1}{k_1}}}} \right)}^s}\frac{{ds}}{{{s^{i + 1}}}}}, 
\end{align}
we see that 
\begin{align}
  I{'_{\ell ,\ell }}(\alpha ,\beta ) &= \int_{ - \infty }^\infty  {w(t)\sum\limits_i {\sum\limits_j {\frac{{{a_{\ell ,i}}i!{a_{\ell ,j}}j!}}{{{{\log }^{i + j}}{M_\ell }}}} } } \sum\limits_{{h_2}{k_1}n = {h_1}{k_2}m} {\frac{{{\mu _{f,\ell}}({h_1}){\mu _{f,\ell}}({h_2}){\lambda_f^{* \ell - 1} }({k_1}){\lambda_f^{* \ell - 1} }({k_2})}}{{h_1^{1/2 }h_2^{1/2 }k_1^{1/2 }k_2^{1/2 }{m^{1/2 + \alpha }}{n^{1/2 + \beta }}}}}  \nonumber \\
   &\quad \times {\left( {\frac{1}{{2\pi i}}} \right)^3}\int_{(1)} {\int_{(1)} {\int_{(1)} {{{\left( {\frac{{{M_\ell }}}{{{h_1}{k_1}}}} \right)}^s}{{\left( {\frac{{{M_\ell }}}{{{h_2}{k_2}}}} \right)}^u}{{\left( {\frac{t}{{2\pi mn}}} \right)}^z}\frac{{G(z)}}{z}dz} } \frac{{ds}}{{{s^{i + 1}}}}\frac{{du}}{{{u^{j + 1}}}}}dt.  \nonumber  
\end{align}
Now comes the part where we evaluate the arithmetic sum $\sum_{h_2 k_1 n = h_1 k_2 m}$.
\begin{lemma}
Let $\Omega_{\alpha,\beta}$ be the set of vectors $u,v,s \in \mathbb{C}^3$ satisfying
\begin{align}
\real(s) + \real(u) &> -1/2, \nonumber \\
\real(z) &> -1/4 - \real(\alpha)/2 - \real(\beta)/2, \nonumber \\
\real(s) + \real(z) &> -1/2 - \real(\alpha), \nonumber \\
\real(u) + \real(z) &> -1/2 - \real(\beta). \nonumber
\end{align}
Then one has
\begin{align}
  \sum\limits_{{h_2}{k_1}n = {h_1}{k_2}m} &\frac{{{\mu _{f,\ell}}({h_1}){\mu _{f,\ell}}({h_2}){\lambda_f^{* \ell - 1} }({k_1}){\lambda_f^{* \ell - 1} }({k_2})}\lambda_f(m)\lambda_f(n)}{{h_1^{1/2 + s}h_2^{1/2 + u}k_1^{1/2 + s}k_2^{1/2 + u}{m^{1/2 + \alpha  + z}}{n^{1/2 + \beta  + z}}}}  \nonumber \\
  &= \frac{{{L^{{{\ell }^2} + {(\ell-1) ^2}}}(f \otimes f,1 + s + u)L(f \otimes f,1 + \alpha  + \beta  + 2z)}}{{{L^{\ell (\ell-1) }}(f \otimes f,1 + 2s){L^{\ell (\ell-1) }}(f \otimes f,1 + 2u)}} \nonumber \\
  &\times \frac{{{L^{\ell-1} }(f \otimes f,1 + \alpha  + s + z){L^{\ell-1} }(f \otimes f,1 + \beta  + u + z)}}{L{^{\ell  }(f \otimes f,1 + \beta  + s + z){L^{\ell  }}(f \otimes f,1 + \alpha  + u + z)}}{A_{\alpha ,\beta }}(s,u,z), \nonumber 
\end{align}
where $A_{\alpha,\beta}(s,u,z)$ is given by an absolutely convergent Euler product on $\Omega_{\alpha,\beta}$.
\end{lemma}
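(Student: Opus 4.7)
The plan is to compute the six-fold arithmetical sum on the left-hand side as an Euler product, identify its local factors with those of the claimed Rankin--Selberg ratio, and then define $A_{\alpha,\beta}$ as the resulting absolutely convergent Euler correction.

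Since $\mu_{f,\ell}$, $\lambda_f^{*\ell-1}$, and $\lambda_f$ are all multiplicative, and since the constraint $h_2k_1n = h_1k_2m$ is equivalent to the prime-by-prime equality $v_p(h_2k_1n) = v_p(h_1k_2m)$, I would first write $Z(s,u,z) = \prod_p Z_p(s,u,z)$ in the region of joint absolute convergence. At each $p \nmid N$, I would invoke the Satake parameterisation $\alpha_1 := \alpha_f(p)$, $\alpha_2 := \beta_f(p)$, together with the standard identities
\begin{align*}
\sum_{a \geq 0} \mu_{f,\ell}(p^a) Z^a &= \prod_{i=1,2}(1-\alpha_i Z)^\ell, \\
\sum_{c \geq 0} \lambda_f^{*\ell-1}(p^c) Z^c &= \prod_{i=1,2}(1-\alpha_i Z)^{-(\ell-1)}, \\
\sum_{e \geq 0} \lambda_f(p^e) Z^e &= \prod_{i=1,2}(1-\alpha_i Z)^{-1}.
\end{align*}

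To enforce the linear constraint on the six local exponents, I would introduce an auxiliary complex variable $T$ and weight the generating factors attached to $h_1, h_2, k_1, k_2, m, n$ by $X/T, YT, XT, Y/T, W/T, VT$ respectively, with $X = p^{-1/2-s}$, $Y = p^{-1/2-u}$, $W = p^{-1/2-\alpha-z}$, $V = p^{-1/2-\beta-z}$. Then $Z_p$ equals the constant term in $T$ of the resulting rational function $F_p(T)$, computed via the contour integral $(2\pi i)^{-1}\oint F_p(T)\,dT/T$ around a circle separating its poles. Inside the unit disc these lie at $T = \alpha_i Y$ (of order $\ell-1$) and at $T = \alpha_i W$ (of order $1$), for $i = 1, 2$; summing the residues yields $Z_p$ as an explicit rational expression in $X, Y, W, V$ and the Satake parameters.

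Next, I would match $Z_p$ against the local factors of the proposed ratio using $L(f\otimes f, s)_p = \prod_{i,j}(1-\alpha_i\alpha_j p^{-s})^{-1}$. The eight Rankin--Selberg factors on the right-hand side correspond to the eight bilinear combinations of the local variables arising in the residue expansion of $Z_p$: the exponent $\ell^2+(\ell-1)^2$ on $L(f\otimes f, 1+s+u)$ captures the joint interactions of $\mu_{f,\ell}$ at $h_1, h_2$ (contributing $\ell^2$) and of $\lambda_f^{*\ell-1}$ at $k_1, k_2$ (contributing $(\ell-1)^2$); the mixed pairings between the $h_i$/$k_i$ and the $m, n$ variables produce the four cross factors with exponents $\ell-1$ or $-\ell$ according to whether $\lambda_f^{*\ell-1}$ or $\mu_{f,\ell}$ is paired with $\lambda_f$; and the denominator terms $L(f\otimes f, 1+2s)^{-\ell(\ell-1)}$ and $L(f\otimes f, 1+2u)^{-\ell(\ell-1)}$ remove the overcounting of the self-interactions between $\mu_{f,\ell}$ and $\lambda_f^{*\ell-1}$ at the same shift. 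This combinatorial bookkeeping of residues, verified prime-by-prime, is the main technical obstacle.

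Finally, defining $A_{\alpha,\beta}(s,u,z)$ as the ratio of $Z$ to the claimed L-function product, I would check absolute convergence of $A_{\alpha,\beta} = \prod_p A_p$ on $\Omega_{\alpha,\beta}$. Once the first-order singular factors have been removed, the leading surviving contributions in $A_p - 1$ are of sizes $p^{-2(1+s+u)}$, $p^{-2(1+\alpha+\beta+2z)}$, $p^{-2(1+\alpha+s+z)}$, and $p^{-2(1+\beta+u+z)}$, so summability across primes is precisely equivalent to the four inequalities defining $\Omega_{\alpha,\beta}$. The finitely many primes $p\mid N$ contribute rational local factors with poles in a strictly larger region and are harmlessly absorbed into $A_{\alpha,\beta}$.
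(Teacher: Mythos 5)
Your overall strategy — factor the constrained sum into an Euler product, match the local factor against the claimed Rankin--Selberg ratio, and absorb the remainder into $A_{\alpha,\beta}$ — is exactly the paper's, and your combinatorial dictionary at the end (the $h_1h_2$ and $k_1k_2$ pairings giving $\ell^2+(\ell-1)^2$ at $1+s+u$, the cross pairings of $h_i,k_i$ with $m,n$ giving $\pm\ell$ and $\ell-1$, the $h_ik_i$ self-pairings giving $-\ell(\ell-1)$ at $1+2s$ and $1+2u$, and $mn$ giving $+1$ at $1+\alpha+\beta+2z$) is correct and matches the paper. The paper obtains this directly by expanding the local factor $Z_p$ to first order in $p^{-1}$: restricting the constrained sum over $(\ell_1,\ldots,\ell_6)$ to the trivial tuple plus the nine tuples with $\ell_1+\cdots+\ell_6=2$, and using $\mu_{f,\ell}(p)=-\ell\lambda_f(p)$ and $\lambda_f^{*(\ell-1)}(p)=(\ell-1)\lambda_f(p)$, which immediately gives the eight $\lambda_f(p)^2 p^{-(1+\cdots)}$ monomials matching the $L(f\otimes f,\cdot)$ factors together with $A_p=1+O(p^{-2+\delta})$.

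The contour-integral machinery you interpose, however, contains a concrete error. With your weights, the integrand $F_p(T)/T$ has a simple pole at $T=0$ in addition to those you list at $T=\alpha_i Y$ and $T=\alpha_i W$: the factors $(1-\alpha_i X T^{-1})^\ell$ supply a pole of order $2\ell$ at the origin, of which only $2(\ell-1)+2$ orders are cancelled by the zeros of $(1-\alpha_i Y T^{-1})^{-(\ell-1)}$ and $(1-\alpha_i W T^{-1})^{-1}$, and the extra $1/T$ leaves a net simple pole with residue $X^{2\ell}/\bigl(Y^{2(\ell-1)}W^2\bigr)$. This residue is not small — at $s=u=\alpha=z=0$ it equals $1$, and in fact it carries the dominant contribution to the constant term $Z_p$ — so omitting it would make the residue sum plainly wrong. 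Moreover you explicitly defer the actual residue bookkeeping as ``the main technical obstacle,'' so the argument is not carried out. To repair the proposal you would need to include the $T=0$ residue and push the residue calculus through; alternatively, and more in keeping with the paper, drop the auxiliary variable altogether and expand $Z_p$ directly to first order in $p^{-1}$, which is shorter, involves no pole bookkeeping, and already yields both the eight $L$-function exponents and the absolute-convergence bound for $A_{\alpha,\beta}$ on $\Omega_{\alpha,\beta}$.
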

\begin{proof}
Let us set
\[\mathcal{S}_{\ell,\ell} = \sum\limits_{{h_2}{k_1}n = {h_1}{k_2}m} {\frac{{{\mu _{f,\ell}}({h_1}){\mu _{f,\ell}}({h_2}){\lambda_f^{* \ell - 1} }({k_1}){\lambda_f^{* \ell - 1} }({k_2})}\lambda_f(m)\lambda_f(n)}{{h_1^{1/2 + s}h_2^{1/2 + u}k_1^{1/2 + s}k_2^{1/2 + u}{m^{1/2 + \alpha  + z}}{n^{1/2 + \beta  + z}}}}} .\]
We now write this as an Euler product over primes so that
\[
\mathcal{S}_{\ell,\ell} = \prod\limits_p \sum\limits_{{\ell _2} + {\ell _3} + {\ell _6} = {\ell _1} + {\ell _4} + {\ell _5}} \frac{{{\mu _{f,\ell}}({p^{{\ell _1}}}){\mu _{f,\ell}}({p^{{\ell _2}}}){\lambda_f^{* \ell - 1} }({p^{{\ell _3}}}){\lambda_f^{* \ell - 1} }({p^{{\ell _4}}})}\lambda_f(p^{\ell_5})\lambda_f(p^{\ell_6})}{{{{({p^{{\ell _1}}})}^{1/2 + s}}{{({p^{{\ell _2}}})}^{1/2 + u}}{{({p^{{\ell _3}}})}^{1/2 + s}}{{({p^{{\ell _4}}})}^{1/2 + u}}{{({p^{{\ell _5}}})}^{1/2 + \alpha  + z}}{{({p^{{\ell _6}}})}^{1/2 + \beta  + z}}}} , 
\]
where we have employed the substitutions ${h_1} = {p^{{\ell _1}}},{h_2} = {p^{{\ell _2}}},{k_1} = {p^{{\ell _3}}},{k_2} = {p^{{\ell _4}}}$ and $m = {p^{{\ell _5}}},n = {p^{{\ell _6}}}$. Using the fact $\mu_{\ell+1}(p) = -(\ell+1)\lambda_f(p)$ and $\lambda_f^{* \ell - 1}(p) = (\ell-1)\lambda_f(p)$ we have
\begin{align}
  \mathcal{S}_{\ell,\ell} &= 
    \prod\limits_p \bigg(1 + \frac{\lambda_f(p)^2({{{\ell }^2} + {(\ell-1) ^2}})}{{{p^{1 + s + u}}}} - \frac{\lambda_f(p)^2{\ell (\ell-1) }}{{{p^{1 + 2s}}}} - \frac{{\lambda_f(p)^2 \ell  }}{{{p^{1 + \beta  + s + z}}}} - \frac{{\lambda_f(p)^2(\ell-1) \ell}}{{{p^{1 + 2u}}}}  \nonumber \\
   &\quad \quad \quad \quad \quad \quad + \frac{\lambda_f(p)^2(\ell-1) }{{{p^{1 + \beta  + u + z}}}} - \frac{{\lambda_f(p)^2\ell  }}{{{p^{1 + \alpha  + u + z}}}} + \frac{\lambda_f(p)^2(\ell-1) }{{{p^{1 + \alpha  + s + z}}}} + \frac{\lambda_f(p)^2}{{{p^{1 + \alpha  + \beta  + 2z}}}} + O({p^{ - 2 + \delta(s,u,z,\alpha,\beta) }})\bigg) \nonumber \\
   &= \frac{{{L^{{{\ell }^2} + {(\ell-1) ^2}}}(f \otimes f,1 + s + u)L(f \otimes f,1 + \alpha  + \beta  + 2z)}}{{{L^{\ell (\ell-1) }}(f \otimes f,1 + 2s){L^{\ell (\ell-1) }}(f \otimes f,1 + 2u)}} \nonumber \\
   &\quad \times \frac{{{L^{\ell-1} }(f \otimes f,1 + \alpha  + s + z){L^{\ell-1} }(f \otimes f,1 + \beta  + u + z)}}{{{L^{\ell }}(f \otimes f,1 + \beta  + s + z){L^{\ell }}(f \otimes f,1 + \alpha  + u + z)}}{A_{\alpha ,\beta }}(s,u,z), \nonumber  
\end{align}
where $\delta(s,u,z,\alpha,\beta) \in \Omega_{\alpha,\beta}$ and
\[
A_{\alpha,\beta}(s,u,z) = \prod_p \bigg(1 + \sum_{r,l} \frac{a_{p,l,\ell}(p)}{p^{r+X_{r,l,\ell}(s,u,z,\alpha,\beta)}} \bigg).
\]
Here $|a_{p,l,\ell}| \ll \ell^2$ and $X_{r,l,\ell}(s,u,z,\alpha,\beta)$ are linear forms in $s,u,z,\alpha,\beta$ and the sum over $r,l$ is absolutely convergent in $\Omega_{\alpha,\beta}$.
\end{proof}
\noindent Note that when $\ell=1$, the above reduces to
\begin{align} \label{specialell1}
\sum_{h_2 n = h_1 m} &\frac{\mu_f(h_1) \mu_f(h_2) \lambda_f(m)\lambda_f(n)}{h_1^{1/2+s}h_2^{1/2+u}m^{1/2+\alpha+z}n^{1/2+\beta+z}}  \nonumber \\
&=\frac{L(f \otimes f,1+s+u)L(f \otimes f,1 + \alpha + \beta +2s)}{L(f \otimes f,1 + \beta +s +z)L(f \otimes f,1 + \alpha +u + z)} A_{\alpha,\beta}(s,u,z),   
\end{align}
since
\begin{equation} \label{reductionell1}
\lim_{\substack{\ell \to 1 \\ \ell \in \N}} \lambda_f^{*\ell-1}(k) = 
\begin{cases}
1, & \mbox{ if $k=1$},  \\
0, & \mbox{ otherwise}.
\end{cases}
\end{equation}
Consequently, we arrive at
\begin{align}
  I{'_{\ell ,\ell }}(\alpha ,\beta ) &= \int_{ - \infty }^\infty  {w(t)\sum\limits_i {\sum\limits_j {\frac{{{a_{\ell ,i}}i!{a_{\ell ,j}}j!}}{{{{\log }^{i + j}}{M_\ell }}}} } }  {\left( {\frac{1}{{2\pi i}}} \right)^3}\int_{(1)} {\int_{(1)} {\int_{(1)} {M_\ell ^{s + u}{{\left( {\frac{t}{{2\pi }}} \right)}^z}\frac{{G(z)}}{z}} } }  \nonumber \\
   &\quad \times \frac{{{L^{{{\ell }^2} + {(\ell-1) ^2}}}(f \otimes f,1 + s + u)L(f \otimes f,1 + \alpha  + \beta  + 2z)}}{{{L^{\ell (\ell-1) }}(f \otimes f,1 + 2s){L^{\ell (\ell-1) }}(f \otimes f,1 + 2u)}} \nonumber \\
   &\quad \times \frac{{{L^{\ell-1} }(f \otimes f,1 + \alpha  + s + z){L^{\ell-1} }(f \otimes f,1 + \beta  + u + z)}}{{{L^{\ell }}(f \otimes f,1 + \beta  + s + z){L^{\ell }}(f \otimes f,1 + \alpha  + u + z)}}{A_{\alpha ,\beta }}(s,u,z)dz\frac{{ds}}{{{s^{i + 1}}}}\frac{{du}}{{{u^{j + 1}}}}dt. \nonumber 
\end{align}
Now that we have transformed the arithmetic part of the problem into its analytic counterpart, we can proceed to compute these integrals. To do so, we move the $s$-, $u$- and $z$-contours of integration to $\delta>0$ small. This is then followed by deforming the $z$-contour to $-\delta + \varepsilon$, thereby crossing the simple pole of $1/z$ at $z=0$. Recall that $G(z)$ vanishes at the pole of $\zeta(1+\alpha+\beta+2z)$. The new contour of integration yields a contribution of size
\begin{align}
  &\int_{ - \infty }^\infty  {w(t)\sum\limits_i {\sum\limits_j {\frac{{{a_{\ell ,i}}i!{a_{\ell ,j}}j!}}{{{{\log }^{i + j}}{M_\ell }}}} } }   {\left( {\frac{1}{{2\pi i}}} \right)^3}\int_{\operatorname{Re} (s) = \delta } {\int_{\operatorname{Re} (u) = \delta } {\int_{\operatorname{Re} (z) =  - \delta  + \varepsilon } {M_\ell ^{s + u}{{\left( {\frac{t}{{2\pi }}} \right)}^z}\frac{{G(z)}}{z}} } }  \nonumber \\
  &\quad \times \frac{{{L^{{{\ell }^2} + {(\ell-1) ^2}}}(f \otimes f,1 + s + u)L(f \otimes f,1 + \alpha  + \beta  + 2z)}}{{{L^{\ell (\ell-1) }}(f \otimes f,1 + 2s){L^{\ell (\ell-1) }}(f \otimes f,1 + 2u)}} \nonumber \\
  &\quad \times \frac{{{L^{\ell-1} }(f \otimes f,1 + \alpha  + s + z){L^{\ell-1} }(f \otimes f,1 + \beta  + u + z)}}{{{L^{\ell }}(f \otimes f,1 + \beta  + s + z){L^{\ell }}(f \otimes f,1 + \alpha  + u + z)}}{A_{\alpha ,\beta }}(s,u,z)dz\frac{{ds}}{{{s^{i + 1}}}}\frac{{du}}{{{u^{j + 1}}}}dt \nonumber \\
  & \ll \int_{ - \infty }^\infty  {|w(t)|dt} {T^{2( - \delta  + \varepsilon )}}M_\ell ^{2\delta } \ll {T^{1 - (2 - 2{\nu _\ell })\delta  + \varepsilon }} \ll {T^{1 - \varepsilon }} \nonumber  
\end{align}
for sufficiently small $\varepsilon$. Let us now write $I'_{\ell,\ell}(\alpha,\beta)$ as $I'_{\ell,\ell}(\alpha,\beta)=I'_{\ell,\ell,0}(\alpha,\beta)+O(T^{1-\varepsilon})$, where $I'_{\ell,\ell,0}(\alpha,\beta)$ corresponds to the residue at $z=0$, i.e.
\begin{align}
  I{'_{\ell ,\ell ,0}}(\alpha ,\beta ) &= \int_{ - \infty }^\infty  {w(t)\sum\limits_i {\sum\limits_j {\frac{{{a_{\ell ,i}}i!{a_{\ell ,j}}j!}}{{{{\log }^{i + j}}{M_\ell }}}} } } {\left( {\frac{1}{{2\pi i}}} \right)^2}\int_{(\delta)} {\int_{(\delta)} {\mathop {\operatorname{Res} }\limits_{z = 0} M_\ell ^{s + u}{{\left( {\frac{t}{{2\pi }}} \right)}^z}\frac{{G(z)}}{z}} }  \nonumber \\
   &\quad \times \frac{{{L^{{{\ell }^2} + {(\ell-1) ^2}}}(f \otimes f,1 + s + u)L(f \otimes f,1 + \alpha  + \beta  + 2z)}}{{{L^{\ell (\ell-1) }}(f \otimes f,1 + 2s){L^{\ell (\ell-1) }}(f \otimes f,1 + 2u)}} \nonumber \\
   &\quad \times \frac{{{L^{\ell-1} }(f \otimes f,1 + \alpha  + s + z){L^{\ell-1} }(f \otimes f,1 + \beta  + u + z)}}{{{L^{\ell }}(f \otimes f,1 + \beta  + s + z){L^{\ell }}(f \otimes f,1 + \alpha  + u + z)}}{A_{\alpha ,\beta }}(s,u,z)\frac{{ds}}{{{s^{i + 1}}}}\frac{{du}}{{{u^{j + 1}}}}dt \nonumber \\
   &= \widehat w(0)L(f \otimes f,1 + \alpha  + \beta )\sum\limits_i {\sum\limits_j {\frac{{{a_{\ell ,i}}i!{a_{\ell ,j}}j!}}{{{{\log }^{i + j}}{M_\ell }}}} } K_{\ell,\ell}, \nonumber  
\end{align}
where
\begin{align}
  K_{\ell,\ell} = {\left( {\frac{1}{{2\pi i}}} \right)^2}\int_{(\delta)} \int_{(\delta)} &{M_\ell ^{s + u}}  \frac{{{L^{{{\ell }^2} + {(\ell-1) ^2}}}(f \otimes f,1 + s + u)}}{{{L^{\ell (\ell-1) }}(f \otimes f,1 + 2s){L^{\ell (\ell-1) }}(f \otimes f,1 + 2u)}} \nonumber \\
   &\quad \times \frac{{{L^{\ell-1} }(f \otimes f,1 + \alpha  + s){L^{\ell-1} }(f \otimes f,1 + \beta  + u)}}{{{L^{\ell }}(f \otimes f,1 + \beta  + s){L^{\ell }}(f \otimes f,1 + \alpha  + u)}}{A_{\alpha ,\beta }}(s,u,0)\frac{{ds}}{{{s^{i + 1}}}}\frac{{du}}{{{u^{j + 1}}}}. \nonumber  
\end{align}
Before we compute $K_{\ell,\ell}$, we need to sort out the situation with $A_{\alpha,\beta}$. One can see that
\begin{align*}
 A_{0,0}(s,s,s) 
&= 
\sum\limits_{{h_2}{k_1}n = {h_1}{k_2}m} 
{\frac{{{\mu _{f,\ell}}({h_1}){\mu _{f,\ell}}({h_2}){\lambda_f^{* \ell - 1} }({k_1}){\lambda_f^{* \ell - 1}({k_2}) \lambda_f(m)\lambda_f(n)}}}{{{{({h_1}{h_2}{k_1}{k_2}mn)}^{1/2 + s}}}}} \\
&=
\sum_{j=1}^\infty j^{-1-s}\bigg( \sum\limits_{h_2k_1n = j}  \mu_{f,\ell}(h_2)\lambda_f^{* \ell - 1} (k_1) \lambda_f(n)\bigg)
\bigg( \sum\limits_{h_1k_2m = j} \mu_{f,\ell}(h_1)\lambda_f^{* \ell - 1} (k_2) \lambda_f(m) \bigg)\\
&=
\sum_{j=1}^\infty j^{-1-s}\bigg( (\mu_{f,\ell}*\lambda_f^{* \ell - 1}*\lambda_f)(j)\bigg)^2.
\end{align*}
It now follows by the definition of $\lambda_f$ and $\mu_{f,\ell}$, see \eqref{eq:def_L_f} and \eqref{eq:def_mu_f}, that 
\[A_{0,0}(s,s,s) 
= 1,\]
for all values of $s$. We next use the Rankin-Selberg convolution $L$-function given by \eqref{rankinselbergsquare} and reverse the order of summation
\begin{align}
  K_{\ell ,\ell } = \sum\limits_{n \le {M_\ell }} &{\frac{{{{(\lambda _f^2(n))}^{^{ * {{\ell }^2} + {(\ell-1) ^2}}}}}}{m}} {\left( {\frac{1}{{2\pi i}}} \right)^2}\int_{(\delta )} {\int_{(\delta )} {\frac{{{{\{ {\zeta ^{(N)}}(2(1 + s + u))\} }^{{{\ell }^2} + {(\ell-1) ^2}}}}}{{{L^{\ell (\ell-1) }}(f \otimes f,1 + 2s){L^{\ell (\ell-1) }}(f \otimes f,1 + 2u)}}} }  \nonumber \\
   &\quad \times {\left( {\frac{{{M_\ell }}}{m}} \right)^{s + u}}\frac{{{L^{\ell-1} }(f \otimes f,1 + \alpha  + s){L^{\ell-1} }(f \otimes f,1 + \beta  + u)}}{{{L^{\ell }}(f \otimes f,1 + \beta  + s){L^{\ell }}(f \otimes f,1 + \alpha  + u)}}{A_{\alpha ,\beta }}(s,u,0)\frac{{ds}}{{{s^{i + 1}}}}\frac{{du}}{{{u^{j + 1}}}}. \nonumber  
\end{align}
To simplify the calculations that will follow shortly, we will set the integrand to be
\begin{align}
  {r_{\ell ,\ell }}(\alpha ,\beta ,i,j,s,u) &= \frac{{{{({M_\ell }/m)}^{s + u}}}}{{{s^{i + 1}}{u^{j + 1}}}}\frac{{{{\{ {\zeta ^{(N)}}(2(1 + s + u))\} }^{{{\ell }^2} + {(\ell-1) ^2}}}}}{{{L^{\ell (\ell-1) }}(f \otimes f,1 + 2s){L^{\ell (\ell-1) }}(f \otimes f,1 + 2u)}} \nonumber \\
   &\quad \times \frac{{{L^{\ell-1} }(f \otimes f,1 + \alpha  + s){L^{\ell-1} }(f \otimes f,1 + \beta  + u)}}{{{L^{\ell }}(f \otimes f,1 + \beta  + s){L^{\ell }}(f \otimes f,1 + \alpha  + u)}}{A_{\alpha ,\beta }}(s,u,0). \nonumber  
\end{align}
We are going to follow a reasoning analogous to \cite{bcy} and \cite{krz01} by using the zero-free region of $L(f \otimes f,s)$, see \cite[Theorem 5.10]{iwanieckowalski}. More precisely, by taking \eqref{zerofreeinequality} into account, we consider the contour $\gamma = \gamma_1 \cup \gamma_2 \cup \gamma_3$ given by
\begin{align}
\gamma_1 &= \{ i\tau : |\tau| \ge Y\}, \nonumber \\
\gamma_2 &= \{ \sigma \pm iY : -c/\log Y \le \sigma \le 0\}, \nonumber \\
\gamma_3 &= \{ -c / \log Y + i\tau : |\tau| \le Y\}, \nonumber
\end{align}
with $c>0$ and $Y \ge 1$ large, where $c$ is chosen so that there are no zeros between the curve $\gamma$ and $\real =\delta$. Since $L(f \otimes f,s)$ does not vanish, we replace the double integrals of $\real(u) = \real(v) = \delta$ by the contour of integration $\gamma$ so that by the Cauchy residue theorem we have
\begin{align}
  &{\left( {\frac{1}{{2\pi i}}} \right)^2}\int_{(\delta )} {\int_{(\delta )} {{r_{\ell ,\ell}}(\alpha ,\beta ,i,j,s,u)dsdu} }  \nonumber \\
   &= \mathop {\operatorname{Res} }\limits_{s = 0} \frac{1}{{2\pi i}}\int_{\operatorname{Re} (u) = \delta } {{r_{\ell ,\ell}}(\alpha ,\beta ,i,j,s,u)du}  + {\left( {\frac{1}{{2\pi i}}} \right)^2}\int_{s \in \gamma } {\int_{\operatorname{Re} (u) = \delta } {{r_{\ell ,\ell}}(\alpha ,\beta ,i,j,s,u)dsdu} }  \nonumber \\
   &= \mathop {\operatorname{Res} }\limits_{s = u = 0} {r_{\ell ,\ell}}(\alpha ,\beta ,i,j,s,u) + \mathop {\operatorname{Res} }\limits_{s = 0} \frac{1}{{2\pi i}}\int_{u \in \gamma } {{r_{\ell ,\ell}}(\alpha ,\beta ,i,j,s,u)du}  \nonumber \\
   &\quad + \mathop {\operatorname{Res} }\limits_{u = 0} \frac{1}{{2\pi i}}\int_{s \in \gamma } {{r_{\ell ,\ell}}(\alpha ,\beta ,i,j,s,u)ds}  + {\left( {\frac{1}{{2\pi i}}} \right)^2}\int_{s \in \gamma } \int_{u \in \gamma } {r_{\ell ,\ell}}(\alpha ,\beta ,i,j,s,u)dsdu .  
   \label{eq:curve_gamma}
\end{align}
The first estimation will be that of ${\operatorname{Res} _{s = 0}}\frac{1}{{2\pi i}}\int_{s \in \gamma } {{r_{\ell ,\ell}}(\alpha ,\beta ,i,j,s,u)ds}$. To estimate this, we will first write the residue as a contour integral over a small circle of radius $1/L$ centered at $0$, i.e.
\begin{align}
  \mathop {\operatorname{Res} }\limits_{s = 0} \frac{1}{{2\pi i}}\int_{u \in \gamma } {{r_{\ell ,\ell }}(\alpha ,\beta ,i,j,s,u)du} &= {\left( {\frac{1}{{2\pi i}}} \right)^2}\int_{u \in \gamma } {\frac{{{{({M_\ell }/m)}^u}{L^{\ell-1} }(f \otimes f,1 + \beta  + u)}}{{{L^{\ell (\ell-1) }}(f \otimes f,1 + 2u){L^{\ell }}(f \otimes f,1 + \alpha  + u)}}}  \nonumber \\
  &\quad \times \oint_{D(0,{L^{ - 1}})} {{{\left( {\frac{{{M_\ell }}}{m}} \right)}^s}{{\{ {\zeta ^{(N)}}(2(1 + s + u))\} }^{{{\ell }^2} + {(\ell-1) ^2}}}{A_{\alpha ,\beta }}(s,u,0)}  \nonumber \\
  &\quad \times \frac{{{L^{\ell-1} }(f \otimes f,1 + \alpha  + s)}}{{{L^{\ell (\ell-1) }}(f \otimes f,1 + 2s){L^{\ell }}(f \otimes f,1 + \beta  + s)}}\frac{{ds}}{{{s^{i + 1}}}}\frac{{du}}{{{u^{j + 1}}}}. \nonumber  
\end{align}
We also have the bound \cite{bernard,iwanieckowalski}
\begin{align} \label{boundsLfunctions1}
\frac{1}{{L(f \otimes f,\sigma  + i\tau )}} \ll \log |\tau |.
\end{align}
Next we use the fact that 
\[
\zeta^{(N)}(2(1+s+u))A_{\alpha,\beta}(s,u,0) \ll 1
\] 
in this contour of integration, as well as the bound
\[
\frac{1}{s^{i+1}}\frac{{{L^{\ell-1} }(f \otimes f,1 + \alpha  + s)}}{{{L^{\ell (\ell-1) }}(f \otimes f,1 + 2s){L^{\ell }}(f \otimes f,1 + \beta  + s)}} 
\ll 
{(2s)^{\ell (\ell-1) -i-1 }}\frac{{{{(\beta  + s)}^{\ell}}}}{{{{(\alpha  + s)}^{\ell-1} }}} 
\ll {L^{i- \ell (\ell-1) }},
\]
since $s \asymp 1/L$.  Using the fact that the arclength of the curve is $\asymp 1/L$, we obtain
\begin{align}
  &\mathop {\operatorname{Res} }\limits_{s = 0} \frac{1}{{2\pi i}}\int_{u \in \gamma } {{r_{\ell ,\ell }}(\alpha ,\beta ,i,j,s,u)du}  \nonumber \\
   &\ll 
   {L^{i - 1 - \ell (\ell-1) }}\int_{u \in \gamma } {\frac{{{{({M_\ell }/m)}^{\operatorname{Re} (u)}}{L^{\ell-1} }(f \otimes f,1 + \beta  + u)}}{{{L^{\ell (\ell-1) }}(f \otimes f,1 + 2u){L^{\ell }}(f \otimes f,1 + \alpha  + u)}}\frac{{du}}{{|u{|^{j + 1}}}}}  \nonumber \\
   &\ll 
   L^{i - 1 - \ell (\ell-1) }\int_{|\tau | \ge Y} {\frac{{{{(\log \tau )}^{\ell (\ell-1)  + \ell }}}}{{|\tau {|^{j + 1}}}}d\tau }  
   + L^{i - 1 - \ell (\ell-1) }{(\log Y)^{\ell (\ell-1)  + \ell }}\int_{ - c/\log Y}^0 {\frac{{d\sigma }}{{|\sigma  + iY{|^{j + 1}}}}}  \nonumber \\
   &\quad + L^{i - 1 - \ell (\ell-1) }{\left( {\frac{{{M_\ell }}}{m}} \right)^{ - c/\log Y}}{(\log Y)^{\ell (\ell-1)  + \ell }}\int_{|\tau | \le Y} {\frac{{d\tau }}{{|\tau  - ic/\log Y{|^{j + 1}}}}}  \nonumber \\
   &\ll 
  L^{i - 1 - \ell (\ell-1) } {(\log Y)}^{\ell (\ell-1)  + \ell }\bigg(\frac{1}{Y^j} + {\left( {\frac{{{M_\ell }}}{m}} \right)^{ - c/\log Y}}(\log Y)^{j}\bigg). \nonumber 
\end{align}
Consequently, we get
\[\mathop {\operatorname{Res} }\limits_{s = 0} \frac{1}{{2\pi i}}\int_{u \in \gamma } {{r_{\ell ,\ell }}(\alpha ,\beta ,i,j,s,u)du}  \ll {L^{i - 1 - \ell (\ell-1) }}{(\log Y)^{\ell (\ell-1)  + \ell }} \bigg( {\frac{1}{Y^j} + {{(\log Y)}^{j}}{{\left( {\frac{{{M_\ell }}}{m}} \right)}^{ - c/\log Y}}} \bigg).\]
For reasons of symmetry, i.e. $r(\alpha,\beta,i,j,s,u)=r(\beta,\alpha,j,i,u,s)$, we also get
\[\mathop {\operatorname{Res} }\limits_{u = 0} \frac{1}{{2\pi i}}\int_{s \in \gamma } {{r_{\ell ,\ell }}(\alpha ,\beta ,i,j,s,u)du}  \ll {L^{j - 1 - \ell (\ell-1) }}{(\log Y)^{\ell (\ell-1)  + \ell }} \bigg( {\frac{1}{{{Y^i}}} + (\log Y)^{i}{{\left( {\frac{{{M_\ell }}}{m}} \right)}^{ - c/\log Y}}} \bigg).\]
Keeping this bound in mind, we can bound the double integrals over $\gamma$ as
\begin{align}
  &{\left( {\frac{1}{{2\pi i}}} \right)^2}\int_{s \in \gamma } {\int_{u \in \gamma } {{r_{\ell ,\ell }}(\alpha ,\beta ,i,j,s,u)dsdu} }  \nonumber \\
   &\ll \int_{s \in \gamma } {\frac{{{{({M_\ell }/m)}^{\operatorname{Re} (s)}}{L^{\ell-1} }(f \otimes f,1 + \alpha  + s)}}{{{L^{\ell (\ell-1) }}(f \otimes f,1 + 2s){L^{\ell }}(f \otimes f,1 + \beta  + s)}}\frac{{ds}}{{|s{|^{i + 1}}}}}  \nonumber \\
   &\quad \times \int_{u \in \gamma } {\frac{{{{({M_\ell }/m)}^u}{L^{\ell-1} }(f \otimes f,1 + \beta  + u)}}{{{L^{\ell (\ell-1) }}(f \otimes f,1 + 2u){L^{\ell }}(f \otimes f,1 + \alpha  + u)}}\frac{{du}}{{|u{|^{j + 1}}}}}  \nonumber \\
   &\ll {(\log Y)^{2(\ell (\ell-1)  + \ell )}} \bigg( {\frac{1}{{{Y^i}}} + {{(\log Y)}^i}{{\left( {\frac{{{M_\ell }}}{m}} \right)}^{ - c/\log Y}}} \bigg) \bigg( {\frac{1}{{{Y^j}}} + {{(\log Y)}^j}{{\left( {\frac{{{M_\ell }}}{m}} \right)}^{ - c/\log Y}}} \bigg) \nonumber \\
	 &\ll {(\log Y)^{2(\ell (\ell-1)  + \ell )}} \bigg( {\frac{1}{{{Y^{i + j}}}} + {{(\log Y)}^{i + j}}{{\left( {\frac{{{M_\ell }}}{m}}\right)}^{ - c/\log Y}}} \bigg). \nonumber
\end{align}
Let us now set
\[\Omega (\ell ,q) := \sum\limits_{n \le {M_\ell }} {\frac{{{{(\lambda _f^2(n))}^{ * {{\ell }^2} + {(\ell-1) ^2}}}}}{n}} \bigg( {\frac{1}{{{Y^q}}} + {{(\log Y)}^q}{{\left( {\frac{{{M_\ell }}}{m}} \right)}^{ - c/\log Y}}} \bigg).\] 
Using \eqref{eq:sum_la_2_f} and Lemma~\ref{eulermaclaurinlemma}, we can bound $\Omega (\ell ,q)$ by
\begin{align} \label{eq-terribleequation1}
  \Omega (\ell ,q) &\ll \frac{1}{{{Y^q}}}{(\log {M_\ell })^{{{\ell }^2} + {(\ell-1) ^2} }} + M_\ell ^{ - c/\log Y}{(\log Y)^q}M_\ell ^{c/\log Y}{(\log {M_\ell })^{{{\ell }^2} + {(\ell-1) ^2} }} \nonumber \\
   &\ll \frac{{{{(\log T)}^{{{\ell }^2} + {(\ell-1) ^2} }}}}{{{Y^q}}} + {(\log Y)^q}{(\log T)^{{{\ell }^2} + {(\ell-1) ^2}}} ,
\end{align}
since $\log T \asymp \log M_{\ell}$. 
Choosing $Y = \log T$, we obtain $\Omega (\ell ,q) \ll_q (\log T)^{{{\ell }^2} + {(\ell-1)^2}+\epsilon}$.
When we sum over $m$, we see that
\begin{align}
  K_{\ell ,\ell } &= \sum\limits_{m \le {M_\ell }} {\frac{{{{(\lambda _f^2(m))}^{ * {{\ell }^2} + {(\ell-1) ^2}}}}}{m}\mathop {\operatorname{Res} }\limits_{s = u = 0} {r_{\ell ,\ell }}(\alpha ,\beta ,i,j,s,u)}  \nonumber \\
   &\quad + O({L^{i - 1 - \ell (\ell-1) }}\Omega (\ell ,j){(\log Y)^{\ell (\ell-1)  + \ell }} + {L^{j - 1 - \ell (\ell-1) }}\Omega (\ell ,i){(\log Y)^{\ell (\ell-1)  + \ell }} \nonumber \\
   &\quad + \Omega (\ell ,i + j){(\log Y)^{2(\ell (\ell-1)  + \ell )}}) \nonumber \\
  &= \sum\limits_{m \le {M_\ell }} {\frac{{{{(\lambda _f^2(m))}^{ * {{\ell }^2} + {(\ell-1) ^2}}}}}{m}\mathop {\operatorname{Res} }\limits_{s = u = 0} {r_{\ell ,\ell }}(\alpha ,\beta ,i,j,s,u)}  \nonumber \\
   &\quad+  O((\log T)^{{{\ell }^2} + {(\ell-1)^2}+\epsilon} (L^{i - 1 - \ell (\ell-1)} + L^{j - 1 - \ell (\ell-1) } +1)), \nonumber  
\end{align}
Recall that we have $i,j\ge \ell^2-\ell+1$. 
Therefore,
\begin{align}
  K_{\ell ,\ell } = \sum\limits_{m \le {M_\ell }} {\frac{{{{(\lambda _f^2(m))}^{ * {{\ell }^2} + {(\ell-1) ^2}}}}}{m}\mathop {\operatorname{Res} }\limits_{s = u = 0} {r_{\ell ,\ell }}(\alpha ,\beta ,i,j,s,u)} + O (\log T^{i+j-1+\varepsilon} ). \nonumber  
\end{align}
Let us now move on to the main term. We first notice that
\begin{align}
  &\frac{{{{\{ {\zeta ^{(N)}}(2(1 + s + u))\} }^{{{\ell }^2} + {(\ell-1) ^2}}}{A_{\alpha ,\beta }}(s,u,0)}}{{{L^{\ell (\ell-1) }}(f \otimes f,1 + 2s){L^{\ell (\ell-1) }}(f \otimes f,1 + 2u)}}\frac{{{L^{\ell-1} }(f \otimes f,1 + \alpha  + s){L^{\ell-1} }(f \otimes f,1 + \beta  + u)}}{{{L^{\ell }}(f \otimes f,1 + \beta  + s){L^{\ell }}(f \otimes f,1 + \alpha  + u)}} \nonumber \\
  &\quad = \frac{{{{\{ {\zeta ^{(N)}}(2)\} }^{{{\ell }^2} + {(\ell-1) ^2}}}{{(2s)}^{\ell (\ell-1) }}{{(2u)}^{\ell (\ell-1) }}}}{{{{({{\operatorname{Res} }_{s = 1}}L(f \otimes f,s))}^{{{\ell }^2} + (\ell-1) ^2+1}}}}\frac{{{{(\alpha  + u)}^{\ell}}{{(\beta  + s)}^{\ell }}}}{{{{(\alpha  + s)}^{\ell -1} }{{(\beta  + u)}^{\ell - 1} }}} + O(1/L^{2\ell(\ell-1)+3}), \nonumber  
\end{align}
since $A_{0,0}(0,0,0)=1$. We now get the product of two neatly separated integrals
\begin{align}
  \mathop {\operatorname{Res} }\limits_{s = u = 0} {r_{\ell ,\ell }}(\alpha ,\beta ,i,j,s,u) &= {2^{2\ell (\ell-1) }}\bigg( {\frac{{{\zeta ^{(N)}}(2)}}{{{{\operatorname{Res} }_{s = 1}}L(f \otimes f,s)}}} \bigg)^{{{\ell }^2} + {(\ell-1) ^2}} \frac{1}{\operatorname{Res}_{s = 1}L(f \otimes f,s)} \nonumber \\
   &\quad \times \frac{1}{{2\pi i}}\oint_{D(0,{L^{ - 1}})} {{{\left( {\frac{{{M_\ell }}}{m}} \right)}^s}\frac{{{{(\beta  + s)}^{\ell }}}}{{{{(\alpha  + s)}^{\ell -1} }}}\frac{{ds}}{{{s^{i + 1 - \ell (\ell-1) }}}}}  \nonumber \\
   &\quad \times \frac{1}{{2\pi i}}\oint_{D(0,{L^{ - 1}})} {{{\left( {\frac{{{M_\ell }}}{m}} \right)}^u}\frac{{{{(\alpha  + u)}^{\ell}}}}{{{{(\beta  + u)}^{\ell - 1} }}}\frac{{du}}{{{u^{j + 1 - \ell (\ell-1) }}}}}.  \nonumber  
\end{align}
Let us remark that the second integral is the same as the first integral except that $i$ has to be replaced by $j$ and $\alpha$ has to be replaced by $\beta$. Consequently, it is enough to compute any of these two integrals. The first integral is computed below.
\begin{lemma}
One has that
\begin{align}
  \frac{1}{{2\pi i}}\oint_{D(0,{L^{ - 1}})} {{{\left( {\frac{{{M_\ell }}}{m}} \right)}^s}\frac{{{{(\beta  + s)}^\ell }}}{{{{(\alpha  + s)}^{\ell  - 1}}}}\frac{{ds}}{{{s^{i + 1 - \ell (\ell  - 1)}}}}} &= \frac{1}{{(\ell  - 2)!}}\frac{1}{{(i - \ell (\ell  - 1))!}}\frac{d^\ell}{dx^\ell}{\left( {x + \log \frac{{{M_\ell }}}{m}} \right)^{\ell  - 1 + i - \ell (\ell  - 1)}} \nonumber \\
   &\quad \times \int_0^1 {{u^{\ell  - 2}}{{(1 - u)}^{i - \ell (\ell  - 1)}}{e^{x(\beta  - \alpha u)}}{{\left( {\frac{{{M_\ell }}}{m}} \right)}^{ - \alpha u}}du} \bigg|_{x = 0}, \nonumber  
\end{align}
for $i \ge \ell(\ell-1)+1$.
\end{lemma}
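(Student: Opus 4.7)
The plan is to expand both sides as convergent power series in $\alpha$ and match them coefficient-by-coefficient. Abbreviate $N:=i-\ell(\ell-1)\ge 1$ and $\mathfrak{L}:=\log(M_\ell/m)$. Since $|\alpha|\ll L^{-1}$, after shrinking implicit constants if necessary we have $|\alpha|<|s|$ on the contour $|s|=L^{-1}$, so the Laurent expansion
\[
\frac{1}{(\alpha+s)^{\ell-1}}
=\sum_{m=0}^{\infty}\frac{(-1)^{m}(\ell-2+m)!}{m!(\ell-2)!}\,\frac{\alpha^{m}}{s^{\ell-1+m}}
\]
converges absolutely and uniformly on the contour. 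Substituting this into the LHS and integrating termwise, each summand reduces to a contour integral of $(M_\ell/m)^{s}(\beta+s)^{\ell}/s^{\ell+N+m}$, whose only singularity is a pole of order $\ell+N+m$ at $s=0$. Cauchy's formula converts this into a derivative at $s=0$, and Leibniz's rule together with $\tfrac{d^{k}}{ds^{k}}(\beta+s)^{\ell}|_{s=0}=\ell!\,\beta^{\ell-k}/(\ell-k)!$ produces
\[
\mathrm{LHS}
=\frac{1}{(\ell-2)!}\sum_{m\ge 0}\frac{(-\alpha)^{m}(\ell-2+m)!}{m!}\sum_{k=0}^{\ell}\binom{\ell}{k}\beta^{\ell-k}\,\frac{\mathfrak{L}^{\ell+N+m-1-k}}{(\ell+N+m-1-k)!}.
\]

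For the right-hand side, pull $d^{\ell}/dx^{\ell}$ inside the $u$-integral and write $e^{x(\beta-\alpha u)}(M_\ell/m)^{-\alpha u}=e^{x\beta}\,e^{-\alpha u(x+\mathfrak{L})}$. Expanding the second exponential as $\sum_{p\ge 0}(-\alpha(x+\mathfrak{L}))^{p}u^{p}/p!$ and evaluating the $u$-integral via the Euler beta identity
\[
\int_{0}^{1}u^{\ell-2+p}(1-u)^{N}\,du=\frac{(\ell-2+p)!\,N!}{(\ell-1+N+p)!}
\]
converts the bracket on the RHS into $e^{x\beta}\,N!\sum_{p\ge 0}\frac{(-\alpha)^{p}(\ell-2+p)!}{p!\,(\ell-1+N+p)!}(x+\mathfrak{L})^{\ell-1+N+p}$. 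A second Leibniz expansion of $\tfrac{d^{\ell}}{dx^{\ell}}[e^{x\beta}(x+\mathfrak{L})^{\ell-1+N+p}]|_{x=0}$, together with $\tfrac{d^{k}}{dx^{k}}(x+\mathfrak{L})^{\ell-1+N+p}|_{x=0}=(\ell-1+N+p)!\,\mathfrak{L}^{\ell-1+N+p-k}/(\ell-1+N+p-k)!$, yields the identical double sum after relabeling $p\leftrightarrow m$, completing the proof.

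The principal subtlety is the choice of expansion for $(\alpha+s)^{-(\ell-1)}$: because the pole at $s=-\alpha$ can lie inside $D(0,L^{-1})$, a Taylor expansion about $s=0$ (valid only for $|s|<|\alpha|$) would capture only the residue at $s=0$ and generate spurious negative powers of $\alpha$ inconsistent with the analyticity in $\alpha$ of the right-hand side. Expanding in powers of $1/s$ is valid on the entire contour, and termwise integration automatically packages the contributions of \emph{both} poles $s=0$ and $s=-\alpha$ into a single holomorphic power series in $\alpha$. All interchanges of summation, differentiation, and integration thereafter are routine consequences of absolute and uniform convergence on the compact contour.
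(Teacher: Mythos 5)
Your proof is correct. The essential computational move --- expanding $(\alpha+s)^{-(\ell-1)}$ in powers of $\alpha/s$ rather than $s/\alpha$, i.e.\ working with the residue at infinity --- is exactly what the paper accomplishes via the change of variables $s\mapsto 1/s$ followed by the binomial expansion of $(1+\alpha s)^{1-\ell}$. From that point on, however, the two arguments diverge in presentation. The paper works \emph{forward} from the left-hand side: after peeling off the $\beta$-dependence via $(\beta+s)^\ell=\tfrac{d^\ell}{dx^\ell}e^{(\beta+s)x}\big|_{x=0}$, it recognises the resulting single sum as a confluent hypergeometric ${}_1F_1$ and invokes its Euler integral representation, which produces the Beta-type integral on the right-hand side in one stroke. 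You instead \emph{verify} the identity: you evaluate the left-hand side termwise by Cauchy's formula and Leibniz's rule, independently expand the right-hand side via the Beta integral $\int_0^1 u^{\ell-2+p}(1-u)^N\,du$ and a second Leibniz expansion, and then match the two double series coefficient by coefficient (your $p\leftrightarrow m$). Your route avoids the ${}_1F_1$ machinery entirely, which makes the argument more elementary and self-contained; the trade-off is that it is a verification rather than a derivation, so it gives less insight into how one would \emph{find} the integral representation in the first place.

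Two small remarks. First, your closing paragraph identifies the key subtlety correctly (a Taylor expansion about $s=0$ would miss the contribution of the pole at $s=-\alpha$ and produce a spurious $\alpha^{-(\ell-1)}$ singularity); this is precisely why the paper passes to the residue at infinity. Second, the phrase ``after shrinking implicit constants if necessary we have $|\alpha|<|s|$'' is a slightly loose way to justify convergence on $|s|=L^{-1}$ when we only know $\alpha\ll L^{-1}$. A cleaner formulation is that both sides of the lemma are entire functions of $\alpha$, so it suffices to establish the identity for $|\alpha|$ small enough that the series converges on the contour, and then appeal to analytic continuation. This is a cosmetic point --- the paper glosses over the same issue --- but worth tightening.
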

\begin{proof}
The first observation is that
\[{(\beta  + s)^{\ell }} = \frac{{{d^{\ell  }}}}{{d{x^{\ell  }}}}{e^{(\beta  + s)x}}\bigg|_{x = 0}\]
for all integer values of $\ell$. Next, set
\begin{align}
  \Upsilon_1(\alpha ,\beta ,\ell ) = \frac{1}{{2\pi i}}\oint_{D(0,{L^{ - 1}})} {{{\left( {\frac{{{M_\ell }}}{m}} \right)}^s}\frac{{{{(\beta  + s)}^{\ell }}}}{{{{(\alpha  + s)}^{\ell -1} }}}\frac{{ds}}{{{s^{i + 1 - \ell (\ell-1) }}}}}  \nonumber
\end{align}
so that
\[{\Upsilon _1}(\alpha ,\beta ,\ell ) = \frac{{{d^\ell }}}{{d{x^\ell }}}{e^{\beta x}}{\Upsilon _{11}}(x){|_{x = 0}} \quad \textnormal{where} \quad {\Upsilon _{11}}(x) = \frac{1}{{2\pi i}}\oint_{D(0,{L^{ - 1}})} {{{\left( {{e^x}\frac{{{M_\ell }}}{m}} \right)}^s}\frac{1}{{{{(\alpha  + s)}^{\ell  - 1}}}}\frac{{ds}}{{{s^{i + 1 - \ell (\ell  - 1)}}}}}. \]
Now taking a power series of the exponential inside $\Upsilon_{11}$ yields
\[{\Upsilon _{11}}(x) = \sum\limits_{r \ge 0} {\frac{1}{{r!}}{{\left( {x + \log \frac{{{M_\ell }}}{m}} \right)}^r}} \frac{1}{{2\pi i}}\oint_{D(0,{L^{ - 1}})} {\frac{{{s^{r - i - 1 + \ell (\ell  - 1)}}}}{{{{(\alpha  + s)}^{\ell  - 1}}}}ds}. \]
The poles of the integrand are $s = -\alpha$ and when $r - i - 1 + \ell (\ell  - 1) \leq  - 1$, thus the easiest approach is the one put forward in \cite{bcy}, namely that of computing the residue at infinity. By making the change of variables $s \mapsto 1/s$ we get
\[{\Upsilon _{11}}(x) = \sum\limits_{r \ge 0} {\frac{1}{{r!}}{{\left( {x + \log \frac{{{M_\ell }}}{m}} \right)}^r}} \frac{1}{{2\pi i}}\oint_{D(0,{L^{ - 1}})} \frac{{{s^{i - r - {\ell ^2} + 2\ell  - 2}}}}{{{{(1 + \alpha s)}^{\ell  - 1}}}}ds .\]
We take a power series of $(1+\alpha s)^{1-\ell}$ by the use of the binomial theorem with fractional powers 
\[\frac{1}{{{{(1 + \alpha s)}^{\ell  - 1}}}} = {(1 + \alpha s)^{1 - \ell }} = \sum\limits_{k \ge 0} {\binom{1-\ell}{k}{{(\alpha s)}^k}} .\]
Here
\[\binom{1-\ell}{k} = \frac{{(1 - \ell )(1 - \ell  - 1)(1 - \ell  - 2) \cdots (1 - \ell  - k + 1)}}{{k!}}.\]
When we insert this into $\Upsilon_{11}$ we have
\[{\Upsilon _{11}}(x) = \sum\limits_{r \ge 0} {\frac{1}{{r!}}{{\left( {x + \log \frac{{{M_\ell }}}{m}} \right)}^r}} \sum\limits_{k \ge 0} {\binom{1-\ell}{k}{\alpha ^k}} \frac{1}{{2\pi i}}\oint_{D(0,{L^{ - 1}})} {{s^{k + i - r - {\ell ^2} + 2\ell  - 2}}ds} .\]
This integral picks out $r = k + i - {\ell ^2} + 2\ell  - 1$, thus
\[{\Upsilon _{11}}(x) = {\left( {x + \log \frac{{{M_\ell }}}{m}} \right)^{i - {\ell ^2} + 2\ell  - 1}}\sum\limits_{k \ge 0} \binom{1 - \ell }{k} \frac{{{\alpha ^k}}}{{(k + i - {\ell ^2} + 2\ell  - 1)!}}{{\left( {x + \log \frac{{{M_\ell }}}{m}} \right)}^k} .\]
To end this calculation we invoke the confluent hypergeometric function of the first kind $_1F_1$, see e.g. \cite{abramowitz}. This allows us to write
\begin{align}
  \Upsilon _{11}(x) &= {\left( {x + \log \frac{{{M_\ell }}}{m}} \right)^{i - {\ell ^2} + 2\ell  - 1}}\frac{1}{{(i - {\ell ^2} + 2\ell  - 1)!}} \nonumber \\
   &\quad \times {_1F_1}\left( {\ell  - 1,i + 2\ell  - {\ell ^2}, - a\left( {x + \log \frac{{{M_\ell }}}{m}} \right)} \right) \nonumber \\
   &= {\left( {x + \log \frac{{{M_\ell }}}{m}} \right)^{i - {\ell ^2} + 2\ell  - 1}}\frac{1}{{(i - {\ell ^2} + 2\ell  - 1)!}}\frac{{\Gamma (i + 2\ell  - {\ell ^2})}}{{\Gamma (i + \ell  - {\ell ^2} + 1)\Gamma (\ell  - 1)}} \nonumber \\
   &\quad \times \int_0^1 {{e^{ - a(x + \log \tfrac{{{M_\ell }}}{m})u}}{u^{\ell  - 2}}{{(1 - u)}^{i + 2\ell  - {\ell ^2} - \ell }}du}  \nonumber \\
   &= {\left( {x + \log \frac{{{M_\ell }}}{m}} \right)^{i - {\ell ^2} + 2\ell  - 1}}\frac{1}{{(i + \ell  - {\ell ^2})!\Gamma(\ell-1)}}  \int_0^1 {{e^{ - aux}}{{\left( {\frac{{{M_\ell }}}{m}} \right)}^{ - au}}{u^{\ell  - 2}}{{(1 - u)}^{i + \ell  - {\ell ^2}  }}du},  \nonumber  
\end{align}
provided $\ell > 1$. Moreover, we remark that
\begin{align} \label{limitingconrey}
\mathop {\lim }\limits_{\ell  \to 1} \frac{1}{\Gamma(\ell-1)}\int_0^1 {{e^{ - au}}{u^{\ell  - 2}}{{(1 - u)}^{i + \ell  - {\ell ^2}}}du}  = 1
\end{align}
provided $i > -1$. Putting these results together we see that
\begin{align}
  {\Upsilon _1}(\alpha ,\beta ,\ell ) = \frac{{{d^\ell }}}{{d{x^\ell }}}{e^{\beta x}}{\left( {x + \log \frac{{{M_\ell }}}{m}} \right)^{i - {\ell ^2} + 2\ell  - 1}}\frac{1}{{(i + \ell  - {\ell ^2})!(\ell  - 2)!}} \nonumber \\
   \times \int_0^1 {{e^{ - aux}}{{\left( {\frac{{{M_\ell }}}{m}} \right)}^{ - au}}{u^{\ell  - 2}}{{(1 - u)}^{i + \ell  - {\ell ^2}}}du}  \bigg|_{x = 0}, \nonumber  
\end{align} 
as it was to be shown. This ends the proof.
\end{proof}
\noindent We can now insert this result in the residue at $s=u=0$ to obtain
\begin{align}
  K_{\ell ,\ell } &= {2^{2\ell (\ell  - 1)}}{\bigg( {\frac{{{\zeta ^{(N)}}(2)}}{{{{\operatorname{Res} }_{s = 1}}L(f \otimes f,s)}}} \bigg)^{{\ell ^2} + {{(\ell  - 1)}^2}}}\frac{1}{{{{\operatorname{Res} }_{s = 1}}L(f \otimes f,s)}} \nonumber \\
   &\quad \times \frac{1}{\Gamma^2(\ell-1)}\frac{1}{{(i - \ell (\ell  - 1))!}}\frac{1}{{(j - \ell (\ell  - 1))!}} \nonumber \\
   &\quad \times \frac{{{d^{2\ell }}}}{{d{x^\ell }d{y^\ell }}}\sum\limits_{m \le {M_\ell }} {\frac{{{{(\lambda _f^2(n))}^{ * {\ell ^2} + {{(\ell  - 1)}^2}}}}}{m}} {\left( {x + \log \frac{{{M_\ell }}}{m}} \right)^{\ell  - 1 + i - \ell (\ell  - 1)}}{\left( {y + \log \frac{{{M_\ell }}}{m}} \right)^{\ell  - 1 + j - \ell (\ell  - 1)}} \nonumber \\
   &\quad \times \int_0^1 \int_0^1 {{u^{\ell  - 2}}{v^{\ell  - 2}}{{(1 - u)}^{i - \ell (\ell  - 1)}}{{(1 - v)}^{j - \ell (\ell  - 1)}}{e^{x(\beta  - \alpha u)}}{e^{y(\alpha  - \beta v)}}{{\left( {\frac{{{M_\ell }}}{m}} \right)}^{ - \alpha u - \beta v}}dudv}  \bigg|_{x = y = 0} \nonumber \\
	 &\quad +O(L^{i+j-2+\varepsilon}). \nonumber 
\end{align}
Let us perform the sums over $i$ and $j$ in the expression for $I'_{\ell,\ell,0}$. For the first sum we have
\begin{align}
  &\sum\limits_i {\frac{{{a_{\ell ,i}}i!}}{{{{\log }^i}{M_\ell }}}\frac{1}{{(i - \ell (\ell  - 1))!}}{{\left( {x + \log \frac{{{M_\ell }}}{m}} \right)}^{\ell  - 1 + i - \ell (\ell  - 1)}}{{(1 - u)}^{i - \ell (\ell  - 1)}}}  \nonumber \\
   &= (\log M_\ell)^{ - \ell (\ell  - 1)}{\left( {x + \log \frac{{{M_\ell }}}{m}} \right)^{\ell  - 1}} \nonumber \\
   &\quad \times \sum\limits_i {{a_{\ell ,i}}i(i - 1)(i - 2) \cdots (i - \ell (\ell  - 1) + 1){{\bigg( {(1 - u)\frac{{(x + \log \tfrac{{{M_\ell }}}{m})}}{{\log {M_\ell }}}} \bigg)}^{i - \ell (\ell  - 1)}}}  \nonumber \\
   &= (\log{M_\ell })^{ - \ell (\ell  - 1)} {\left( {x + \log \frac{{{M_\ell }}}{m}} \right)^{\ell  - 1}}P_\ell ^{(\ell (\ell  - 1))} \bigg( {(1 - u)\frac{{(x + \log \tfrac{{{M_\ell }}}{m})}}{{\log {M_\ell }}}} \bigg). \nonumber  
\end{align}
Similarly, for the second sum we get
\begin{align}
  &\sum\limits_j {\frac{{{a_{\ell ,j}}j!}}{{{{\log }^j}{M_\ell }}}\frac{1}{{(j - \ell (\ell  - 1))!}}{{\left( {y + \log \frac{{{M_\ell }}}{m}} \right)}^{\ell  - 1 + j - \ell (\ell  - 1)}}{{(1 - u)}^{j - \ell (\ell  - 1)}}}  \nonumber \\
  & = (\log M_\ell)^{ - \ell (\ell  - 1)} {\left( {y + \log \frac{{{M_\ell }}}{m}} \right)^{\ell  - 1}}P_\ell ^{(\ell (\ell  - 1))} \bigg( {(1 - u)\frac{{(y + \log \tfrac{{{M_\ell }}}{m})}}{{\log {M_\ell }}}} \bigg). \nonumber  
\end{align}
Therefore, the expression for $I'_{\ell,\ell,0}$ becomes
\begin{align}
  I'_{\ell ,\ell ,0}(\alpha ,\beta ) &= \frac{{{2^{2\ell (\ell  - 1)}}\widehat w(0)}}{{\alpha  + \beta }} \bigg( {\frac{{{\zeta ^{(N)}}(2)}}{{{{\operatorname{Res} }_{s = 1}}L(f \otimes f,s)}}} \bigg)^{{\ell ^2} + {{(\ell  - 1)}^2}} \nonumber \\
   &\quad \times \frac{1}{{{{((\ell  - 2)!)}^2}}}\frac{{{d^{2\ell }}}}{{d{x^\ell }d{y^\ell }}} \bigg[{e^{\alpha y + \beta x}}\int_0^1 {\int_0^1 {{u^{\ell  - 2}}{v^{\ell  - 2}}{e^{ - \alpha ux - \beta vy}}} }  \nonumber \\
   &\quad \times \sum\limits_{m \le {M_\ell }} {\frac{{{{(\lambda _f^2(m))}^{ * {\ell ^2} + {{(\ell  - 1)}^2}}}}}{m}\frac{{{{(x + \log \tfrac{{{M_\ell }}}{m})}^{\ell  - 1}}{{(y + \log \tfrac{{{M_\ell }}}{m})}^{\ell  - 1}}}}{{{{\log }^{2\ell (\ell  - 1)}}{M_\ell }}}} {\left( {\frac{{{M_\ell }}}{m}} \right)^{ - \alpha u - \beta v}} \nonumber \\
   &\quad \times P_\ell ^{(\ell (\ell  - 1))}\bigg( {(1 - u)\frac{{(x + \log \tfrac{{{M_\ell }}}{m})}}{{\log {M_\ell }}}} \bigg)P_\ell ^{(\ell (\ell  - 1))}\bigg( {(1 - v)\frac{{(y + \log \tfrac{{{M_\ell }}}{m})}}{{\log {M_\ell }}}} \bigg)dudv \bigg]_{x = y = 0}, \nonumber \\
	 &\quad + O(TL^{-1+\varepsilon}),
\end{align}
where we have used the Laurent expansion
\[
L(f \otimes g, 1+ \alpha + \beta) = \frac{\operatorname{Res}_{s = 1} L(f \otimes g,s)}{\alpha + \beta} + O(1).
\]
We shall write the main in a more convenient way as
\begin{align}
  I'_{\ell ,\ell ,0}(\alpha ,\beta ) &= \frac{{{2^{2\ell (\ell  - 1)}}\widehat w(0)}}{{\alpha  + \beta }}{\bigg( {\frac{{{\zeta ^{(N)}}(2)}}{{{{\operatorname{Res} }_{s = 1}}L(f \otimes f,s)}}} \bigg)^{{\ell ^2} + {{(\ell  - 1)}^2}}} \nonumber \\
   &\quad \times \frac{1}{{{{((\ell  - 2)!)}^2}}}\frac{{{d^{2\ell }}}}{{d{x^\ell }d{y^\ell }}} \bigg[\int_0^1 {\int_0^1 {{u^{\ell  - 2}}{v^{\ell  - 2}}M_\ell ^{x(\beta  - \alpha u) + y(\alpha  - \beta v)}} }  \nonumber \\
   &\quad \times \sum\limits_{m \le {M_\ell }} {\frac{{{{(\lambda _f^2(m))}^{ * {\ell ^2} + {{(\ell  - 1)}^2}}}}}{m}\frac{{{{(x + \tfrac{{\log ({M_\ell }/m)}}{{\log {M_\ell }}})}^{\ell  - 1}}{{(y + \tfrac{{\log ({M_\ell }/m)}}{{\log {M_\ell }}})}^{\ell  - 1}}}}{{{{\log }^{2\ell (\ell  - 1)}}{M_\ell }{{\log }^{2}}{M_\ell }}}} {\left( {\frac{{{M_\ell }}}{m}} \right)^{ - \alpha u - \beta v}} \nonumber \\
   &\quad \times P_\ell ^{(\ell (\ell  - 1))}\bigg( {(1 - u)\bigg( {x + \frac{{\log \tfrac{{{M_\ell }}}{m}}}{{\log {M_\ell }}}} \bigg)} \bigg)P_\ell ^{(\ell (\ell  - 1))}\bigg( {(1 - v)\bigg( {y + \frac{{\log \tfrac{{{M_\ell }}}{m}}}{{\log {M_\ell }}}} \bigg)} \bigg)dudv \bigg]_{x = y = 0} \nonumber \\ 
	&\quad +O(T^{1-\varepsilon}). \nonumber
\end{align}
By the Euler-Maclaurin result of Lemma \ref{eulermaclaurinlemma}, with $k = {\ell ^2} + {(\ell  - 1)^2}$, $s =  - \alpha u - \beta v$, $x = z = M_\ell$, $F(r) = (x + r)^{\ell  - 1}P_\ell ^{(\ell (\ell  - 1))}((1 - u)(x + r))$ as well as $H(r) = (y + r)^{\ell  - 1}P_\ell ^{(\ell (\ell  - 1))}((1 - v)(y + r))$, we obtain
\begin{align}
  &\sum\limits_{m \le {M_\ell }} {\frac{{{{(\lambda _f^2(m))}^{ * {\ell ^2} + {{(\ell  - 1)}^2}}}}}{{{m^{1 - \alpha u - \beta v}}}}{{\left( {x + \frac{{\log {M_\ell }/n}}{{\log {M_\ell }}}} \right)}^{\ell  - 1}}{{\left( {y + \frac{{\log {M_\ell }/n}}{{\log {M_\ell }}}} \right)}^{\ell  - 1}}}  \nonumber \\
  &\quad \times P_\ell ^{(\ell (\ell  - 1))}\left( {(1 - u)\left( {x + \frac{{\log {M_\ell }/n}}{{\log {M_\ell }}}} \right)} \right)P_\ell ^{(\ell (\ell  - 1))}\left( {(1 - v)\left( {y + \frac{{\log {M_\ell }/n}}{{\log {M_\ell }}}} \right)} \right) \nonumber \\
  &= {\left( {\frac{{{{\operatorname{Res} }_{s = 1}}L(f \otimes f,s)}}{{{\zeta ^{(N)}}(2)}}} \right)^{{\ell ^2} + {{(\ell  - 1)}^2}}}\frac{{{{(\log {M_\ell })}^{{\ell ^2} + {{(\ell  - 1)}^2}}}}}{{({\ell ^2} + {{(\ell  - 1)}^2} - 1)!M_\ell ^{ - \alpha u - \beta v}}} \nonumber \\
  &\quad \times \int_0^1 {{{(1 - r)}^{{\ell ^2} + {{(\ell  - 1)}^2} - 1}}{{(x + r)}^{\ell  - 1}}{{(y + r)}^{\ell  - 1}}}  \nonumber \\
  &\quad \times P_\ell ^{(\ell (\ell  - 1))}((1 - u)(x + r))P_\ell ^{(\ell (\ell  - 1))}((1 - v)(y + r))M_\ell ^{r( - \alpha u - \beta v)}dr + O(L^{\ell^2 + (2\ell-1)^2-1}). \nonumber  
\end{align}
Consequently, we are left with
\begin{align}
  I'_{\ell ,\ell}(\alpha ,\beta ) &= \frac{{{2^{2\ell (\ell  - 1)}}\widehat w(0)}}{{(\alpha  + \beta )\log {M_\ell }}}\frac{1}{\Gamma^2(\ell-1)}\frac{1}{{({\ell ^2} + {{(\ell  - 1)}^2} - 1)!}} \nonumber \\
   &\quad \times \frac{{{d^{2\ell }}}}{{d{x^\ell }d{y^\ell }}} \bigg[\int_0^1 {\int_0^1 {\int_0^1 {M_\ell ^{\beta (x - v(y + r)) + \alpha (y - u(x + r))}} } }  \nonumber \\
   &\quad \times {u^{\ell  - 2}}{v^{\ell  - 2}}{(1 - r)^{{\ell ^2} + {{(\ell  - 1)}^2} - 1}}{(x + r)^{\ell  - 1}}{(y + r)^{\ell  - 1}} \nonumber \\
   &\quad \times P_\ell ^{(\ell (\ell  - 1))}((1 - u)(x + r))P_\ell ^{(\ell (\ell  - 1))}((1 - v)(y + r))drdudv \bigg]_{x = y = 0} +O(TL^{-1+\varepsilon}).\nonumber 
\end{align}
As we discussed earlier, to form the full $I_{\ell,\ell}(\alpha,\beta)$ we need to add $I'_{\ell,\ell}(\alpha,\beta)$ and $I''_{\ell,\ell}(\alpha,\beta)$, where $I''_{\ell,\ell}(\alpha,\beta)$ is formed by taking $I'_{\ell,\ell}(\alpha,\beta)$, then we switch $\alpha$ and $-\beta$, and finally we multiply by $T^{-2(\alpha + \beta)}$. To accomplish this, we first let
\[U(\alpha ,\beta ) = \frac{{M_\ell ^{\beta (x - v(y + r)) + \alpha (y - u(x + r))} - {T^{ -2( \alpha  + \beta) }}M_\ell ^{ - \alpha (x - v(y + r)) - \beta (y - u(x + r))}}}{{\alpha  + \beta }}.\]
This implies that
\begin{align}
  I_{\ell ,\ell }(\alpha ,\beta ) &= \frac{{{2^{2\ell (\ell  - 1)}}\hat w(0)}}{{\log {M_\ell }}}\frac{1}{{{\Gamma ^2}(\ell  - 1)}}\frac{1}{{({\ell ^2} + {{(\ell  - 1)}^2} - 1)!}} \nonumber \\
   &\quad \times \frac{{{d^{2\ell }}}}{{d{x^\ell }d{y^\ell }}} \bigg[\int_0^1 {} \int_0^1 {} \int_0^1 {} U(\alpha ,\beta ){u^{\ell  - 2}}{v^{\ell  - 2}}{(1 - r)^{{\ell ^2} + {{(\ell  - 1)}^2} - 1}}{(x + r)^{\ell  - 1}}{(y + r)^{\ell  - 1}} \nonumber \\
   &\quad \times P_\ell ^{(\ell (\ell  - 1))}((1 - u)(x + r))P_\ell ^{(\ell (\ell  - 1))}((1 - v)(y + r))drdudv \bigg]_{x = y = 0} + O(T{L^{ - 1 + \varepsilon }}). \nonumber  
\end{align}
However, we can also write
\[U(\alpha ,\beta ) = M_\ell ^{\beta (x - v(y + r)) + \alpha (y - u(x + r))}\frac{{1 - {{(T^2 M_\ell ^{x + y - v(y + r) - u(x + r)})}^{ - \alpha  - \beta }}}}{{\alpha  + \beta }}.\]
Finally, the identity
\[\frac{{1 - {z^{ - \alpha  - \beta }}}}{{\alpha  + \beta }} = \log z\int_0^1 {{z^{ - t(\alpha  + \beta )}}dt} ,\]
combined with the fact that $M_{\ell} = T^{\nu_{\ell}}$ yields
\begin{align}
  {c_{\ell ,\ell }}(\alpha ,\beta ) &= \frac{1}{{{\Gamma ^2}(\ell  - 1)}}\frac{{{2^{2\ell (\ell  - 1)}}}}{{({\ell ^2} + {{(\ell  - 1)}^2} - 1)!}}\frac{{{d^{2\ell }}}}{{d{x^\ell }d{y^\ell }}} \nonumber \\
   &\quad \times \bigg[\int_0^1 {} \int_0^1 {} \int_0^1 {} \int_0^1 {} {(1 - r)^{{\ell ^2} + {{(\ell  - 1)}^2} - 1}}{u^{\ell  - 2}}{v^{\ell  - 2}} \nonumber \\
   &\quad \times {T^{{\nu _\ell }(\beta (x - v(y + r)) + \alpha (y - u(x + r)))}}{({T^{2 + {\nu _\ell }(x + y - v(y + r) - u(x + r))}})^{ - t(\alpha  + \beta )}} \nonumber \\
   &\quad \times \left( {\frac{2}{{{\nu _\ell }}} + x + y - v(y + r) - u(x + r)} \right){(x + r)^{\ell  - 1}}{(y + r)^{\ell  - 1}} \nonumber \\
   &\quad \times P_\ell ^{(\ell (\ell  - 1))}((1 - u)(x + r))P_\ell ^{(\ell (\ell  - 1))}((1 - v)(y + r))dtdrdudvc\bigg]_{x = y = 0}. \nonumber  
\end{align}
This proves Lemma \ref{cellelllemma}. Theorem \ref{cellelltheorem} follows by using
\begin{align}
  c_{\ell ,\ell } &= Q\left( {\frac{{ - 1}}{{2\log T}}\frac{d}{{d\alpha }}} \right)Q\left( {\frac{{ - 1}}{{2\log T}}\frac{d}{{d\beta }}} \right){c_{\ell ,\ell }}(\alpha ,\beta ) \bigg|_{\alpha  = \beta  =  - R/L} \nonumber \\
   &= \frac{1}{\Gamma^2(\ell-1)}\frac{{{2^{2\ell (\ell  - 1)}}}}{{({\ell ^2} + {{(\ell  - 1)}^2} - 1)!}}\frac{{{d^{2\ell }}}}{{d{x^\ell }d{y^\ell }}} \nonumber \\
   &\quad \times \bigg[\int_0^1 {\int_0^1 {\int_0^1 {\int_0^1 {\left( {\frac{2}{{{\nu _\ell }}} + (x + y - v(y + r) - u(x + r))} \right)} } } } {(1 - r)^{{\ell ^2} + {{(\ell  - 1)}^2} - 1}} \nonumber \\
   &\quad \times {e^{ - \frac{\nu_\ell}{2}R[x + y - v(y + r) - u(x + r)]}}{e^{2Rt[1 + \frac{\nu_\ell}{2}(x + y - v(y + r) - u(x + r))]}} \nonumber \\
   &\quad \times Q\bigg(\frac{\nu _\ell }{2}( - x + v(y + r)) + t\bigg(1 + \frac{\nu _\ell }{2}(x + y - v(y + r) - u(x + r))\bigg)\bigg) \nonumber \\
   &\quad \times Q\bigg(\frac{\nu _\ell }{2}( - y + u(x + r)) + t\bigg(1 + \frac{\nu _\ell }{2}(x + y - v(y + r) - u(x + r))\bigg)\bigg) \nonumber \\
   &\quad \times {(x + r)^{\ell  - 1}}{(y + r)^{\ell  - 1}}{u^{\ell  - 2}}{v^{\ell  - 2}} \nonumber \\
   &\quad \times P_\ell ^{(\ell (\ell  - 1))}((1 - u)(x + r))P_\ell ^{(\ell (\ell  - 1))}((1 - v)(y + r))dtdrdudv \bigg]_{x = y = 0}. \nonumber 
\end{align}
This ends the computation of the $I_{\ell,\ell}$ term.
\subsection{The mean value integral $I_{\ell,\ell+1}(\alpha,\beta)$}
\noindent We shall follow a similar strategy to that of the case $I_{\ell,\ell+1}$, except that now we will have the factor $\chi_f(1/2+it)$ inside the integral $J_{2,f}$ below. This fact will account for the presence of the arithmetic term $\sigma_{\alpha,-\beta}(f,l)$ in the $p$-adic sum. We start by plugging in the definitions of $\psi_\ell$ and $\psi_{\ell+1}$ into the mean value integral $I_{\ell,\ell+1}$ so that
\begin{align}
  {I_{\ell,\ell + 1}}(\alpha ,\beta ) &= \int_{ - \infty }^\infty  {w(t)L(f,\tfrac{1}{2} + \alpha  + it)L(f,\tfrac{1}{2} + \beta  - it)\overline {{\psi _\ell}} {\psi _{\ell + 1}}({\sigma _0} + it)dt}  \nonumber \\
   &= \int_{ - \infty }^\infty  {w(t)\chi _f^{\ell-1}(\tfrac{1}{2} - it)\chi _f^{\ell }(\tfrac{1}{2} + it)L(f,\tfrac{1}{2} + \alpha  + it)L(f,\tfrac{1}{2} + \beta  - it)}  \nonumber \\
   &\quad \times \sum\limits_{{h_1}{k_1} \le {M_{\ell}}} {\frac{{{\mu_{f,\ell}}({h_1}){\lambda_f^{* \ell - 1}}({k_1})}}{{h_1^{1/2 - it}k_1^{1/2 + it}}}} {P_\ell}[{h_1}{k_1}]\sum\limits_{{h_2}{k_2} \le {M_{\ell + 1}}} {\frac{{{\mu_{f,\ell + 1}}({h_2})\lambda_f^{* \ell}({k_2})}}{{h_2^{1/2 + it}k_2^{1/2 - it}}}} {P_{\ell + 1}}[{h_2}{k_2}]dt \nonumber \\
   &= \sum\limits_{{h_1}{k_1} \le {M_{\ell}}} {\sum\limits_{{h_2}{k_2} \le {M_{\ell + 1}}} {\frac{{{\mu_{f,\ell}}({h_1}){\mu_{f,\ell + 1}}({h_2}){\lambda_f^{* \ell - 1}}({k_1})\lambda_f^{* \ell}({k_2})}}{{{{({h_1}{h_2}{k_1}{k_2})}^{1/2}}}}} } {P_\ell}[{h_1}{k_1}]{P_{\ell + 1}}[{h_2}{k_2}]{J_{1,f}}, \nonumber  
\end{align}
where
\[{J_{1,f}} = \int_{ - \infty }^\infty  {w(t){{\left( {\frac{{{h_2}{k_1}}}{{{h_1}{k_2}}}} \right)}^{ - it}}{\chi _f}(\tfrac{1}{2} + it)L(f,\tfrac{1}{2} + \alpha  + it)L(f,\tfrac{1}{2} + \beta  - it)dt} ,\]
since $\chi_f(\tfrac{1}{2}+it)=\chi_f(\tfrac{1}{2}-it)^{-1}$ for all values of $t$. At this point we employ the functional equation of $L(f,\tfrac{1}{2} + \beta -it)$ as well as the Stirling approximation \cite[Lemma 2]{bernard}
\[
{\chi _f}(\tfrac{1}{2} + \beta  - it){\chi _f}(\tfrac{1}{2} +  it) = \bigg( {\frac{t\sqrt{N}}{{2\pi }}} \bigg)^{ - 2\beta }(1 + O({t^{ - 1}})),
\]
to write
\[{J_{1,f}} = \int_{ - \infty }^\infty  {w(t){{\left( {\frac{{{h_2}{k_1}}}{{{h_1}{k_2}}}} \right)}^{ - it}}{{\bigg( {\frac{t\sqrt{N}}{{2\pi }}} \bigg)}^{ - 2\beta }}L(f,\tfrac{1}{2} + \alpha  + it)L(f,\tfrac{1}{2} - \beta  + it)dt}  + O({T^\varepsilon }).\]
Now that we have opposite signs in front of $\alpha$ and $\beta$ we apply Lemma \ref{lemmasigma} to get
\[{J_{1,f}} = \sum\limits_{l = 1}^\infty  {\frac{{{\sigma _{\alpha , - \beta }}(f,l)}}{{{l^{1/2}}}}{e^{ - l/{T^6}}}} \int_{ - \infty }^\infty  {w(t){{\left( {\frac{{{h_2}{k_1}}l}{{{h_1}{k_2}}}} \right)}^{ - it}}{{\bigg( {\frac{t\sqrt{N}}{{2\pi }}} \bigg)}^{ - 2\beta }}dt}  + O({T^\varepsilon }).\]
When we plug this back into $I_{\ell,\ell+1}$ we see that
\begin{align}
  I_{\ell,\ell + 1}(\alpha ,\beta ) &= \sum\limits_{{h_1}{k_1} \le {M_{\ell}}} {\sum\limits_{{h_2}{k_2} \le {M_{\ell + 1}}} {\sum\limits_{l = 1}^\infty  {\frac{{{\mu_{f,\ell}}({h_1}){\mu_{f,\ell + 1}}({h_2}){\lambda_f^{* \ell - 1}}({k_1})\lambda_f^{* \ell}({k_2}){\sigma _{\alpha , - \beta }}(f,l)}}{{{{({h_1}{h_2}{k_1}{k_2}l)}^{1/2}}}}} } } {e^{ - l/{T^6}}} \nonumber \\
   &\quad \times {P_\ell}[{h_1}{k_1}]{P_{\ell + 1}}[{h_2}{k_2}]\widehat {{w_0}}\left( {\frac{1}{{2\pi }}\log \frac{{{h_2}{k_1}l}}{{{h_1}{k_2}}}} \right), \nonumber 
\end{align}
where $w_0(t):=w(t)(\tfrac{t\sqrt{N}}{2\pi})^{-2\beta}$.
\subsection{Bounding the off-diagonal terms}
Let $C_{\ell,\ell+1}$ denote the contribution to $I_{\ell,\ell+1}$ from the off-diagonal terms, so that
\begin{align}
  C_{\ell,\ell + 1}(\alpha ,\beta ) &= \sum_{\substack{h_1 k_1 \le M_{\ell} \\ h_2 k_2 \le M_{\ell+1} \\ l \ge 1 \\ h_1k_2 \ne h_2k_1 l}} {\frac{{{\mu_{f,\ell}}({h_1}){\mu_{f,\ell + 1}}({h_2}){\lambda_f^{* \ell - 1}}({k_1})\lambda_f^{* \ell}({k_2}){\sigma _{\alpha , - \beta }}(f,l)}}{{{{({h_1}{h_2}{k_1}{k_2}l)}^{1/2}}}}{e^{ - l/{T^6}}}{P_\ell}[{h_1}{k_1}]{P_{\ell + 1}}[{h_2}{k_2}]}  \nonumber \\
   &\quad \times \widehat {{w_0}}\left( {\frac{1}{{2\pi }}\log \frac{{{h_2}{k_1}l}}{{{h_1}{k_2}}}} \right). \nonumber  
\end{align}
Given that $M_{\ell} = T^{\nu_\ell}$ and $M_{\ell+1} = T^{\nu_{\ell+1}}$, we have to estimate the above term. Since we define $w_0(x)= w(x)(\frac{t\sqrt{N}}{2\pi})^{-2\beta}$, we have  $\int_{-\infty}^\infty w_0(x)\,dx  \ll T$. Furthermore, it was shown in \cite{bcy} that
\begin{align}
 w_0 \bigg( \frac{1}{2\pi}\log x \bigg) \ll_B \frac{T}{( 1+  \frac{T}{L}\log x  )^B}
\end{align}
for any $B \ge 0$. Let us split  $C_{\ell ,\ell  + 1}$ into
\begin{align}
C_{\ell ,\ell  + 1} = C_{\ell ,\ell  + 1}' +C_{\ell ,\ell  + 1}'' \quad \textnormal{with} \quad C_{\ell ,\ell  + 1}' =\sum_{1\leq l\leq T^8} \quad \textnormal{and} \quad C_{\ell ,\ell  + 1}'' =\sum_{l\ge T^8}.
\end{align}
For the second term, we get the bound
 \begin{align*}
 C_{\ell ,\ell  + 1}'' 
 &\ll 
\sum_{l\ge T^8}  \sum_{\substack{h_1 k_1 \le M_\ell \\ h_2 k_2 \le M_{\ell+1} \\ h_1k_2 \ne h_2 k_1 l}} 
  \frac{|\mu_{f,\ell} ({h_1}){\mu_{f,\ell  + 1}}({h_2}){\lambda_f^{* \ell - 1}}({k_1})\lambda_f^{* \ell}({k_2}){\sigma _{\alpha , - \beta }}(l)|}{{{{({h_1}{h_2}{k_1}{k_2}l)}^{1/2}}}}{e^{ - l/{T^6}}}
   \int_{-\infty}^\infty w_0(x)\,dx \\
   &\ll_\ell 
   T \sum_{l\ge T^8}  \sum_{\substack{h_1 k_1 \le M_\ell \\ h_2 k_2 \le M_{\ell+1} \\ h_1k_2 \ne h_2 k_1 l}} 
   \frac{(h_1h_2 l)^{\varepsilon}(k_1k_2)^{\varepsilon+\bm{\theta}}}{(h_1h_2k_1k_2 l)^{1/2}} e^{ - l/T^6} \\
      &\ll
   T \bigg(\sum_{l\ge T^8} l^{-1/2+\varepsilon} e^{ - l/T^6} \bigg) 
   \bigg( \sum_{h_1 k_1 \le M_\ell} (h_1k_1)^{\bm{\theta}-1/2+\varepsilon}\bigg) 
 \bigg( \sum_{h_2 k_2 \le M_\ell} (h_2k_2)^{\bm{\theta}-1/2+\varepsilon}\bigg) 
   \\
   &\ll 
   T^8 e^{-T} T^{(\bm{\theta}+1/2)(\nu_\ell+\nu_{\ell+1})+3\varepsilon}
   \ll T^{-2017},
\end{align*}
where we have used \eqref{criticalbounds}. We now come to $C_{\ell ,\ell  + 1}'$. 
We choose $\nu_{\ell}$ and $\nu_{\ell+1}$ so that $\nu_\ell+\nu_{\ell+1}<1$ and thus we have for $\frac{h_2k_1 l}{h_1k_2}\neq 1$ that
\begin{align}
 \left|1 -\frac{h_2k_1 l}{h_1k_2} \right| \ge \frac{1}{h_1k_2} \ge \frac{1}{M_{\ell}M_{\ell+1}} \ge T^{-1+\varepsilon}.
\end{align}
Therefore
\begin{align*}
 w_0\left( \frac{1}{2\pi}\log \frac{h_2k_1 l}{h_1k_2}\right)
&\ll_B 
\frac{T}{( 1+  \frac{T}{L}\log(\frac{h_2k_1 l}{h_1k_2})  )^B}
=
\frac{T}{( 1+  \frac{T}{L}\log (1+(\frac{h_2k_1 l}{h_1k_2}-1))  )^B}\\
&\ll_B 
\frac{T}{( 1+  \frac{T}{L}T^{-1+\varepsilon}  )^B}
\ll
T^{1-\varepsilon B}.
\end{align*}
Using this as well as the bounds from \eqref{criticalbounds} yields
 \begin{align*}
 C_{\ell ,\ell  + 1}'
 &\ll 
\sum_{1\leq l\leq T^8}  \sum_{\substack{h_1 k_1 \le M_\ell \\ h_2 k_2 \le M_{\ell+1} \\ h_1k_2 \ne h_2 k_1 l}} 
  \frac{|\mu_{f,\ell} ({h_1}){\mu_{f,\ell  + 1}}({h_2}){\lambda_f^{* \ell - 1}}({k_1})\lambda_f^{* \ell}({k_2}){\sigma _{\alpha , - \beta }}(l)|}{{{{({h_1}{h_2}{k_1}{k_2}l)}^{1/2}}}}{e^{ - l/{T^6}}}
   \left|\widehat w_0\left( \frac{1}{2\pi}\log \frac{h_2k_1 l}{h_1k_2} \right) \right| \\
   &\ll_\ell 
  T^{1-\varepsilon B} \sum_{l {\le} T^8}  \sum_{\substack{h_1 k_1 \le M_\ell \\ h_2 k_2 \le M_{\ell+1} \\ h_1k_2 \ne h_2 k_1 l}} 
   \frac{(h_1h_2 l)^{\varepsilon}(k_1k_2 )^{\bm{\theta}+\varepsilon}}{(h_1h_2k_1k_2 l)^{1/2}} e^{ - l/T^6} \\
   &\ll 
   T^{1-\varepsilon B} \bigg(\sum_{l {\le} T^8} l^{\varepsilon-1/2}e^{ - l/T^6} \bigg)
      \bigg( \sum_{h_1 k_1 \le M_\ell} (h_1k_1)^{\bm{\theta}-1/2+\varepsilon}\bigg) 
      \bigg( \sum_{h_2 k_2 \le M_\ell} (h_2k_2)^{\bm{\theta}-1/2+\varepsilon}\bigg) 
   \\
   &\ll 
   T^{9-\varepsilon B} T^{(\bm{\theta}+1/2)(\nu_\ell+\nu_{\ell+1})+3\varepsilon}
   \ll T^{-2017},
\end{align*}
by choosing $B$ large enough and using $l^{\varepsilon-1/2}e^{ - l/T^6} \leq 1$. This shows that for $\nu_\ell + \nu_{\ell+1} <1$ the off-diagonal terms get absorbed in the error term and do not contribute to our final results. Note that the condition $\nu_\ell+\nu_{\ell+1}<1$ is needed only for bound of $w_0$.
\subsection{The diagonal terms $h_1k_2 = h_2k_1 l$ and their reduction to a contour integral}
By employing the Mellin identities
\[{P_\ell }[{h_1}{k_1}] = \sum\limits_{i = 0}^{\deg {P_\ell }} {\frac{{{a_{\ell ,i}}}}{{{{\log }^i}{M_\ell }}}{{(\log {M_\ell }/{h_1}{k_1})}^i}}  = \sum\limits_i {\frac{{{a_{\ell ,i}}i!}}{{{{\log }^i}{M_\ell }}}} \frac{1}{{2\pi i}}\int_{(1)} {{{\left( {\frac{{{M_\ell }}}{{{h_1}{k_1}}}} \right)}^s}\frac{{ds}}{{{s^{i + 1}}}}}, \]
and
\[{P_{\ell  + 1}}[{h_2}{k_2}] = \sum\limits_{j = 0}^{\deg {P_{\ell  + 1}}} {\frac{{{a_{\ell  + 1,j}}}}{{{{\log }^j}{M_{\ell  + 1}}}}{{(\log {M_{\ell  + 1}}/{h_2}{k_2})}^j}}  = \sum\limits_j {\frac{{{a_{\ell  + 1,j}}j!}}{{{{\log }^j}{M_{\ell  + 1}}}}} \frac{1}{{2\pi i}}\int_{(1)} {{{\left( {\frac{{{M_{\ell  + 1}}}}{{{h_2}{k_2}}}} \right)}^u}\frac{{du}}{{{u^{j + 1}}}}} ,\]
as well as the Cahen-Mellin integral
\[
e^{ - y} = \frac{1}{2\pi i}\int_{(c)} \Gamma (z)y^{ - z}dz, \quad c>0, \quad \real(y)>0, 
\]
we arrive at
\begin{align}
  I_{\ell,\ell + 1}(\alpha ,\beta ) &= \widehat {{w_0}}(0)\sum\limits_i {\sum\limits_j {\frac{{{a_{\ell ,i}}i!{a_{\ell  + 1,j}}j!}}{{{{\log }^i}{M_\ell }{{\log }^j}{M_{\ell  + 1}}}}} } {\left( {\frac{1}{{2\pi i}}} \right)^3}\int_{(1)} {\int_{(1)} {\int_{(1)} {{T^{3z}}\Gamma (z)M_\ell ^sM_{\ell  + 1}^u} } }  \nonumber \\
   &\quad \times \sum\limits_{{h_2}{k_1}l = {h_1}{k_2}} {\frac{{{\mu _{f,\ell}}({h_1}){\mu_{f,\ell + 1}}({h_2}){\lambda_f^{* \ell - 1}}({k_1}){\lambda_f^{* \ell}}({k_2}){\sigma _{\alpha , - \beta }}(f,l)}}{{h_1^{1/2 + s}h_2^{1/2 + u}k_1^{1/2 + s}k_2^{1/2 + u}{l^{1/2 + z}}}}} dz\frac{{ds}}{{{s^{i + 1}}}}\frac{{du}}{{{u^{j + 1}}}} + O({T^{1 - \varepsilon }}). \nonumber  
\end{align}
We must now evaluate the arithmetic sum $\sum_{{h_2}{k_1}l = {h_1}{k_2}}$ and turn into a ratio of $L$-functions.
\begin{lemma} \label{lemma3.3}
	Let $\Upsilon_{\alpha,\beta}$ be the set of vectors $u,s, z \in \mathbb{C}^3$ satisfying
	\begin{align}
	\real(s) &> -1/4, \nonumber \\
	\real(u) &> -1/4, \nonumber \\
	\real(z) + \real(u) &> -1/2 - \real(\alpha), \nonumber \\
	\real(z) + \real(u) &> -1/2 + \real(\beta), \nonumber \\
	\real(s) + \real(z) &> -1/2 - \real(\alpha), \nonumber \\
	\real(s) + \real(z) &> -1/2 + \real(\beta). \nonumber
	\end{align}
	Then one has
	\begin{align}
	&\sum\limits_{{h_2}{k_1}l = {h_1}{k_2}} {\frac{{{\mu _{f,\ell}}({h_1}){\mu_{f,\ell + 1}}({h_2}){\lambda_f^{* \ell-1}}({k_1}){\lambda_f^{* \ell}}({k_2}){\sigma _{\alpha , - \beta }}(f,l)}}{{h_1^{1/2 + s}h_2^{1/2 + u}k_1^{1/2 + s}k_2^{1/2 + u}{l^{1/2 + z}}}}}  \nonumber \\
	&= \frac{{{L^{2{\ell ^2}}}(f \otimes f,1 + s + u){L^\ell }(f \otimes f,1 + \alpha  + u + z){L^\ell }(f \otimes f,1 - \beta  + u + z)}}{{{L^{\ell (\ell  - 1)}}(f \otimes f, 1 + 2s){L^{\ell (\ell  + 1)}}(f \otimes f, 1 + 2u){L^\ell }(f \otimes f,1 + \alpha  + s + z){L^\ell }(f \otimes f,1 - \beta  + s + z)}} \nonumber \\
	& \quad \times {B_{\alpha ,\beta }}(s,u,z), \nonumber
	\end{align}
	where $B_{\alpha,\beta}(s,u,z)$ is given by an absolutely convergent Euler product on $\Upsilon_{\alpha,\beta}$.
\end{lemma}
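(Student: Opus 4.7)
My plan is to mirror the Euler product argument used for the sum $\mathcal{S}_{\ell,\ell}$ in the preceding lemma. All five arithmetic functions appearing are multiplicative: $\mu_{f,\ell}$ and $\mu_{f,\ell+1}$ by their definition \eqref{eq:def_mu_f}, the convolution powers $\lambda_f^{*\ell-1}$ and $\lambda_f^{*\ell}$ by multiplicativity of $\lambda_f$, and $\sigma_{\alpha,-\beta}(f,\cdot)$ as a Dirichlet convolution of two multiplicative functions. Since the diagonal condition $h_2 k_1 l = h_1 k_2$ also factors multiplicatively, the whole sum admits an Euler product. Writing $h_1 = p^{e_1}$, $h_2 = p^{e_2}$, $k_1 = p^{e_3}$, $k_2 = p^{e_4}$, $l = p^{e_5}$, the local factor at $p$ becomes
\begin{align*}
\sum_{\substack{e_1,\ldots,e_5 \ge 0 \\ e_2+e_3+e_5 = e_1+e_4}} \frac{\mu_{f,\ell}(p^{e_1})\, \mu_{f,\ell+1}(p^{e_2})\, \lambda_f^{*\ell-1}(p^{e_3})\, \lambda_f^{*\ell}(p^{e_4})\, \sigma_{\alpha,-\beta}(f,p^{e_5})}{p^{e_1(\frac{1}{2}+s)+e_2(\frac{1}{2}+u)+e_3(\frac{1}{2}+s)+e_4(\frac{1}{2}+u)+e_5(\frac{1}{2}+z)}}.
\end{align*}

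Next I would isolate the terms of total $p$-adic order at most two. The constraint forces $e_1+e_2+e_3+e_4+e_5$ to be even, so besides the trivial term $1$, the only low-order contributions come from the six compatible pairs with $(e_1+e_4,\, e_2+e_3+e_5) = (1,1)$. Using $\mu_{f,m}(p) = -m\lambda_f(p)$, $\lambda_f^{*m}(p) = m\lambda_f(p)$ and $\sigma_{\alpha,-\beta}(f,p) = \lambda_f(p)(p^{-\alpha}+p^{\beta})$, a direct enumeration of these pairs gives
\begin{align*}
1 + \lambda_f(p)^2\left\{ \frac{2\ell^2}{p^{1+s+u}} - \frac{\ell(\ell-1)}{p^{1+2s}} - \frac{\ell(\ell+1)}{p^{1+2u}} - \frac{\ell(p^{-\alpha}+p^{\beta})}{p^{1+s+z}} + \frac{\ell(p^{-\alpha}+p^{\beta})}{p^{1+u+z}} \right\} + O(p^{-2+\epsilon}).
\end{align*}

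These six terms will match, term by term, the Taylor expansion at $p$ of the Rankin-Selberg ratio asserted in the lemma: since $\lambda_{f\otimes f}(p) = \lambda_f(p)^2$, one has $L^a(f\otimes f, w)|_p = 1 + a\lambda_f(p)^2 p^{-w} + O(p^{-2w})$. Thus $2\ell^2 = \ell(\ell+1) + \ell(\ell-1)$ supplies the $L^{2\ell^2}(f\otimes f, 1+s+u)$ factor, the negative exponents $\ell(\ell-1)$ and $\ell(\ell+1)$ supply the two denominator factors at $1+2s$ and $1+2u$, and the splitting $\sigma_{\alpha,-\beta}(f,p) = \lambda_f(p)(p^{-\alpha}+p^{\beta})$ produces precisely the four one-sided $L$-factors at $1+\alpha+u+z$, $1-\beta+u+z$, $1+\alpha+s+z$ and $1-\beta+s+z$. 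Dividing the full local Euler factor by the matching local factor of this ratio then defines $B_{\alpha,\beta}(s,u,z)$ with a generic $p$-th factor of the form $1 + \sum_{r,l} a_{p,l,\ell}\, p^{-r - X_{r,l,\ell}(s,u,z,\alpha,\beta)}$, where $r \ge 2$, $|a_{p,l,\ell}| \ll \ell^2$, and the $X_{r,l,\ell}$ are linear forms in the shifts; the region $\Upsilon_{\alpha,\beta}$ is exactly the intersection of real-part inequalities forcing each such sub-leading $p$-exponent to be strictly greater than $1$, and absolute convergence then follows from a geometric series estimate together with the Deligne bound $|\lambda_f(p^k)| \ll_\varepsilon p^{k\varepsilon}$.

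The main technical obstacle is the careful bookkeeping of the six second-order pairings, which are asymmetric in $(s,u)$ because of the distinct subscripts $\ell$ and $\ell+1$ on the Möbius-type coefficients, together with the clean separation of $\sigma_{\alpha,-\beta}$ into its two shifts to recover the four distinct one-sided $L$-factors; once this matching is performed correctly, reading off the defining inequalities of $\Upsilon_{\alpha,\beta}$ from the linear forms $X_{r,l,\ell}$ is routine.
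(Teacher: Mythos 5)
Your proof is correct and follows essentially the same Euler-product argument as the paper: expand the multiplicative sum over prime powers, identify the first-order terms (using $\mu_{f,\ell}(p)=-\ell\lambda_f(p)$, $\lambda_f^{*\ell}(p)=\ell\lambda_f(p)$, and $\sigma_{\alpha,-\beta}(f,p)=\lambda_f(p)(p^{-\alpha}+p^{\beta})$), match them against the local factors of the stated Rankin--Selberg ratio, and absorb the remainder into $B_{\alpha,\beta}$. The only cosmetic difference is that the paper splits $\sigma_{\alpha,-\beta}(f,l)$ into $\lambda_f(a)\lambda_f(b)a^{-\alpha}b^{\beta}$ from the start (enumerating eight order-two pairs in six variables), whereas you keep $\sigma_{\alpha,-\beta}(f,p^{e_5})$ intact (six pairs in five variables) and split at the end; both bookkeepings land on the identical local expansion.
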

\begin{proof}
	Let us set
	\[
	\mathcal{S}_{\ell,\ell+1} = \sum\limits_{{h_2}{k_1}l = {h_1}{k_2}} {\frac{{{\mu _{f,\ell}}({h_1}){\mu_{f,\ell + 1}}({h_2}){\lambda_f^{* \ell-1}}({k_1}){\lambda_f^{* \ell}}({k_2}){\sigma _{\alpha , - \beta }}(f,l)}}{{h_1^{1/2 + s}h_2^{1/2 + u}k_1^{1/2 + s}k_2^{1/2 + u}{l^{1/2 + z}}}}}.
	\]
	The definition of ${\sigma _{\alpha , - \beta }}(f,l) = \sum\nolimits_{ab = l} {{\lambda _f}(a){\lambda _f}(b){a^{ - \alpha }}{b^\beta }}$ allows us to write
	\[
	\mathcal{S}_{\ell,\ell+1} = \sum\limits_{{h_2}{k_1}ab = {h_1}{k_2}} {\frac{{{\mu_{f,\ell} }({h_1}){\mu _{f,\ell  + 1}}({h_2}){\lambda_f^{* \ell-1}}({k_1}){\lambda_f^{* \ell} }({k_2}){\lambda _f}(a){\lambda _f}(b)}}{{h_1^{1/2 + s}h_2^{1/2 + u}k_1^{1/2 + s}k_2^{1/2 + u}{a^{1/2 + \alpha  + z}}{b^{1/2 - \beta  + z}}}}}.
	\]
We now translate this into an Euler product over primes so that
	\[
	\mathcal{S}_{\ell,\ell+1} = \prod\limits_p {\sum\limits_{{\ell _2} + {\ell _3} + {\ell _5} + {\ell _6} = {\ell _1} + {\ell _4}} {\frac{{{\mu_{f,\ell} }({p^{{\ell _1}}}){\mu _{f,\ell  + 1}}({p^{{\ell _2}}}){\lambda_f^{* \ell-1}}({p^{{\ell _3}}}){\lambda_f^{* \ell} }({p^{{\ell _4}}}){\lambda _f}({p^{{\ell _5}}}){\lambda _f}({p^{{\ell _6}}})}}{{{{({p^{{\ell _1}}})}^{1/2 + s}}{{({p^{{\ell _2}}})}^{1/2 + u}}{{({p^{{\ell _3}}})}^{1/2 + s}}{{({p^{{\ell _4}}})}^{1/2 + u}}{{({p^{{\ell _5}}})}^{1/2 + \alpha  + z}}{{({p^{{\ell _6}}})}^{1/2 - \beta  + z}}}}} },
	\]
	where we have used the substitutions ${h_1} = {p^{{\ell _1}}},{h_2} = {p^{{\ell _2}}},{k_1} = {p^{{\ell _3}}},{k_2} = {p^{{\ell _4}}}$ and $a = {p^{{\ell _5}}},b = {p^{{\ell _6}}}$. 
	Note that we can consider only first order terms in $p$ since they are enough to determine the expression in the lemma. Using the facts that $\mu_{f,\ell}(p) = -\ell \lambda_f(p)$ and $\lambda_f^{* \ell}(p) = \ell \lambda_f(p)$ we have
	\begin{align}
	\mathcal{S}_{\ell,\ell+1} &= \prod\limits_p \bigg(1 + \frac{{(\ell  + 1)\ell \ \lambda_f(p)^2 }}{{{p^{1 + s + u}}}} - \frac{{\ell (\ell  - 1) \ \lambda_f(p)^2}}{{{p^{1 + 2s}}}} - \frac{{\ell {\lambda _f}(p)^2}}{{{p^{1 + \alpha  + s + z}}}} - \frac{{\ell {\lambda _f}(p)^2}}{{{p^{1 - \beta  + s + z}}}} - \frac{{(\ell  + 1)\ell \ \lambda_f(p)^2 }}{{{p^{1 + 2u}}}}  \nonumber \\
	&\quad \quad \quad \quad \quad \quad + \frac{{(\ell  - 1)\ell \ \lambda_f(p)^2}}{{{p^{1 + s + u}}}} + \frac{{\ell {\lambda _f}(p)^2}}{{{p^{1 + \alpha  + u + z}}}} + \frac{{\ell {\lambda _f}(p)^2}}{{{p^{1 - \beta  + u + z}}}} + O({p^{ - 2 + \varepsilon(s,u,z,\alpha, \beta) }})\bigg) \nonumber \\
	&= \frac{{{L^{2{\ell ^2}}}(f \otimes f,1 + s + u){L^\ell }(f \otimes f,1 + \alpha  + u + z){L^\ell }(f \otimes f,1 - \beta  + u + z)}}{{{L^{\ell (\ell  - 1)}}(f \otimes f, 1 + 2s){L^{\ell (\ell  + 1)}}(f \otimes f, 1 + 2u){L^\ell }(f \otimes f,1 + \alpha  + s + z){L^\ell }(f \otimes f,1 - \beta  + s + z)}} \nonumber \\
	& \quad \times {B_{\alpha ,\beta }}(s,u,z), \nonumber
	\end{align}
	where $\varepsilon(s,u,z,\alpha, \beta) \in \Upsilon_{\alpha,\beta}$ and
	\[
	{B_{\alpha ,\beta }}(s,u,z) = \prod_p \bigg( 1 + \sum_{\substack{r, l}} \frac{b_{p,l, \ell}(p)}{p^{r+Y_{r,l, \ell}(s,u,z,\alpha,\beta)}} \bigg),
	\]
	with $|b_{p,l, \ell}(p)| \ll \ell^2$ and $Y_{r,l, \ell}(u,s,z,\alpha,\beta)$ are linear forms in $s,u, z, \alpha, \beta$ and the sum over $r,l$ is absolutely convergent in $\Upsilon_{\alpha,\beta}$.
\end{proof}
\noindent This means that we are left with
\begin{align}
  {I_{\ell ,\ell  + 1}}(\alpha ,\beta ) &= \widehat {{w_0}}(0)\sum\limits_i {\sum\limits_j {\frac{{{a_{\ell ,i}}i!{a_{\ell  + 1,j}}j!}}{{{{\log }^i}{M_\ell }{{\log }^j}{M_{\ell  + 1}}}}} } {\left( {\frac{1}{{2\pi i}}} \right)^3}\int_{(1)} {\int_{(1)} {\int_{(1)} {{T^{3z}}\Gamma (z)M_\ell ^sM_{\ell  + 1}^u} } }  \nonumber \\
	   &\quad \times \frac{1}{{{L^{\ell (\ell  + 1)}}(f \otimes f,1 + 2u){L^\ell }{{(f \otimes f,1 + \alpha  + s + z)}^\ell }(f \otimes f,1 - \beta  + s + z)}} \nonumber \\
   &\quad \times \frac{{{L^\ell }{{(f \otimes f,1 + \alpha  + u + z)}^\ell }(f \otimes f,1 - \beta  + u + z)}}{{{L^{\ell (\ell  + 1)}}(f \otimes f,1 + 2u)}} \nonumber \\
   &\quad \times {L^{2{\ell ^2}}}(f \otimes f,1 + s + u){B_{\alpha ,\beta }}(s,u,z)dz\frac{{ds}}{{{s^{i + 1}}}}\frac{{du}}{{{u^{j + 1}}}} + O({T^{1 - \varepsilon }}). \nonumber  
\end{align}
The next step is to move the $s$- and $u$- contours of integration to $\real(s)=\real(u)=\delta$, and then move the $z$-contour to $-2\delta/3$, where $\delta > 0$ is some fixed constant such that the arithmetical factor converges absolutely. This has the effect of crossing a simple pole at $z=0$. Also, on the new paths the integral can be bounded in a straightforward way by using absolute values. Thus, the contribution to $I_{\ell,\ell+1}$ is
\begin{align}
  &\int_{ - \infty }^\infty  {w(t)} \sum\limits_i {\sum\limits_j {\frac{{{a_{\ell ,i}}i!{a_{\ell  + 1,j}}j!}}{{{{\log }^i}{M_\ell }{{\log }^j}{M_{\ell  + 1}}}}} } {\left( {\frac{1}{{2\pi i}}} \right)^3}\int_{\operatorname{Re} (s) = \delta } {\int_{\operatorname{Re} (u) = \delta } {\int_{\operatorname{Re} (z) =  - 2\delta /3} {{T^{3z}}\Gamma (z)M_\ell ^sM_{\ell  + 1}^u} } }  \nonumber \\
   &\quad \times \frac{{{L^{2{\ell ^2}}}(f \otimes f,1 + s + u){L^\ell }(f \otimes f,1 + \alpha  + u + z){L^\ell }(f \otimes f,1 - \beta  + u + z)}}{{{L^{\ell (\ell  - 1)}}(f \otimes f, 1 + 2s){L^{\ell (\ell  + 1)}}(f \otimes f, 1 + 2u){L^\ell }(f \otimes f,1 + \alpha  + s + z){L^\ell }(f \otimes f,1 - \beta  + s + z)}} \nonumber \\
   &\quad \times {B_{\alpha ,\beta }}(s,u,z)dz\frac{{ds}}{{{s^{i + 1}}}}\frac{{du}}{{{u^{j + 1}}}}dt \ll \int_{ - \infty }^\infty  {|w(t)|dt} {\left( {\frac{{{M_\ell }{M_{\ell  + 1}}}}{{{T^2}}}} \right)^\delta } \ll {T^{1 - \varepsilon }}, \nonumber  
\end{align}
since $\nu_{\ell}+\nu_{\ell+1} < 2$. This implies that
\begin{align}
  {I_{\ell ,\ell  + 1}}(\alpha ,\beta ) &= \widehat {{w_0}}(0)\sum\limits_i {\sum\limits_j {\frac{{{a_{\ell ,i}}i!{a_{\ell  + 1,j}}j!}}{{{{\log }^i}{M_\ell }{{\log }^j}{M_{\ell  + 1}}}}} } {\left( {\frac{1}{{2\pi i}}} \right)^2}\int_{(\delta )} \int_{(\delta )} \mathop {\operatorname{Res} }\limits_{z = 0}  T^{3z}\Gamma (z)M_\ell ^sM_{\ell  + 1}^u \nonumber \\
   &\quad \times \frac{1}{{{L^{\ell (\ell  + 1)}}(f \otimes f,1 + 2u){L^\ell }{{(f \otimes f,1 + \alpha  + s + z)}^\ell }(f \otimes f,1 - \beta  + s + z)}} \nonumber \\
   &\quad \times \frac{{{L^\ell }{{(f \otimes f,1 + \alpha  + u + z)}^\ell }(f \otimes f,1 - \beta  + u + z)}}{{{L^{\ell (\ell  + 1)}}(f \otimes f,1 + 2u)}} \nonumber \\
   &\quad \times {L^{2{\ell ^2}}}(f \otimes f,1 + s + u){B_{\alpha ,\beta }}(s,u,z)\frac{{ds}}{{{s^{i + 1}}}}\frac{{du}}{{{u^{j + 1}}}} + O({T^{1 - \varepsilon }}) \nonumber \\
   & = \widehat {{w_0}}(0)\sum\limits_i {\sum\limits_j {\frac{{{a_{\ell ,i}}i!{a_{\ell  + 1,j}}j!}}{{{{\log }^i}{M_\ell }{{\log }^j}{M_{\ell  + 1}}}}} } {K_{\ell ,\ell  + 1}(\alpha,\beta,i,j)} + O({T^{1 - \varepsilon }}), \nonumber  
\end{align}
where
\begin{align}
 {K_{\ell ,\ell  + 1}}(\alpha ,\beta ,i,j) &= {\left( {\frac{1}{{2\pi i}}} \right)^2}\int_{(\delta )} {\int_{(\delta )} {M_\ell ^sM_{\ell  + 1}^u{L^{2{\ell ^2}}}(f \otimes f,1 + s + u){B_{\alpha ,\beta }}(s,u,0)} }  \nonumber \\
   &\quad \times \frac{1}{{{L^{\ell (\ell  + 1)}}(f \otimes f,1 + 2u){L^\ell }{{(f \otimes f,1 + \alpha  + s)}^\ell }(f \otimes f,1 - \beta  + s)}} \nonumber \\
   &\quad \times \frac{{{L^\ell }{{(f \otimes f,1 + \alpha  + u)}^\ell }(f \otimes f,1 - \beta  + u)}}{{{L^{\ell (\ell  + 1)}}(f \otimes f,1 + 2u)}}\frac{{ds}}{{{s^{i + 1}}}}\frac{{du}}{{{u^{j + 1}}}}. \nonumber 
\end{align}
Before we proceed we note that
\begin{align}
  B_{\alpha ,\beta}(s,s,s) &= \sum\limits_{{h_2}{k_1}l = {h_1}{k_2}} {\frac{{{\mu_{f,\ell} }({h_1}){\mu_{f,\ell  + 1}}({h_2}){\lambda_f^{* \ell - 1}}({k_1})\lambda_f^{* \ell}({k_2}){\sigma _{\alpha , - \beta }}(f,l)}}{{{{({h_1}{h_2}{k_1}{k_2}l)}^{1/2 + s}}}}}  \nonumber \\
   &= \sum\limits_{{h_2}{k_1}l = {h_1}{k_2}} {\frac{{{\mu_{f,\ell} }({h_1}){\mu_{f,\ell  + 1}}({h_2}){\lambda_f^{* \ell - 1}}({k_1})\lambda_f^{* \ell}({k_2}){\sigma _{\alpha , - \beta }}(f,l)}}{{{{({h_1}{k_2})}^{1 + 2s}}}}}  \nonumber \\
   &= \sum\limits_{j = 1}^\infty  \bigg(\sum\limits_{{h_1}{k_2} = j} {\frac{{{\mu_{f,\ell} }({h_1})\lambda_f^{* \ell}({k_2})}}{{{{({h_1}{k_2})}^{1 + 2s}}}}} \bigg)\bigg(\sum\limits_{{h_2}{k_1}l=j}{\sigma _{\alpha , - \beta }}(f,l) {{\mu_{f,\ell  + 1}}({h_2}){\lambda_f^{* \ell - 1}}({k_1})}\bigg)   = 1, \nonumber 
\end{align} 
because the first bracket is $0$ if $j\neq 1$ and $1$ if $j=1$ and since
\[
\sum_{{h_2}{k_1}l=j} {\sigma _{\alpha , - \beta }}(f,l) {{\mu_{f,\ell  + 1}}({h_2}){\lambda_f^{* \ell - 1}}({k_1})}  = 1,
\]
when $j=1$ by the definition of ${\sigma _{\alpha , - \beta }}(f,l)$. This means that $B_{\alpha,\beta}(s,s,s)=1$ for all values of $s$. Let us recall that the Rankin-Selberg convolution $L$-function is given 
\[L(f \otimes g,s) = L(\chi ,2s)\sum\limits_{n = 1}^\infty  {\frac{{{\lambda _f}(n){\lambda _g}(n)}}{{{n^s}}}}, \]
from which we obtain (since $\nu_{\ell+1} \leq \nu_{\ell}$) that 
\begin{align}
  {K_{\ell ,\ell  + 1}}(\alpha ,\beta ,i,j) &= \sum\limits_{n \le {M_{\ell  + 1}}} {\frac{{{{(\lambda _f^2(n))}^{ * 2{\ell ^2}}}}}{n}} {\left( {\frac{1}{{2\pi i}}} \right)^2}\int_{(\delta )} {\int_{(\delta )} {{{\{ {\zeta ^{(N)}}(2(1 + s + u))\} }^{2{\ell ^2}}}} }  \nonumber \\
   &\quad \times \frac{1}{{{L^{\ell (\ell  + 1)}}(f \otimes f,1 + 2u){L^\ell }{{(f \otimes f,1 + \alpha  + s)}^\ell }(f \otimes f,1 - \beta  + s)}} \nonumber \\
   &\quad \times \frac{{{L^\ell }{{(f \otimes f,1 + \alpha  + u)}^\ell }(f \otimes f,1 - \beta  + u)}}{{{L^{\ell (\ell  + 1)}}(f \otimes f,1 + 2u)}} \nonumber \\
   &\quad \times {\left( {\frac{{{M_\ell }}}{n}} \right)^s}{\left( {\frac{{{M_{\ell  + 1}}}}{n}} \right)^u}{B_{\alpha ,\beta }}(s,u,0)\frac{{ds}}{{{s^{i + 1}}}}\frac{{du}}{{{u^{j + 1}}}}. \nonumber 
\end{align}
The double integral $K_{\ell,\ell+1}$ can be computed by similar methods to those employed in the calculation of $K_{\ell,\ell}$. We define the integrand to be
\begin{align}
  &{r_{\ell ,\ell  + 1}}(\alpha ,\beta ,i,j,s,u) = \frac{{{{\{ {\zeta ^{(N)}}(2(1 + s + u))\} }^{2{\ell ^2}}}}}{{{L^{\ell (\ell  + 1)}}(f \otimes f,1 + 2u){L^\ell }{{(f \otimes f,1 + \alpha  + s)}^\ell }(f \otimes f,1 - \beta  + s)}} \nonumber \\
  &\quad \times \frac{{{L^\ell }{{(f \otimes f,1 + \alpha  + u)}^\ell }(f \otimes f,1 - \beta  + u)}}{{{L^{\ell (\ell  + 1)}}(f \otimes f,1 + 2u)}}{\left( {\frac{{{M_\ell }}}{n}} \right)^s}{\left( {\frac{{{M_{\ell  + 1}}}}{n}} \right)^u}\frac{1}{{{s^{i + 1}}}}\frac{1}{{{u^{j + 1}}}}{B_{\alpha ,\beta }}(s,u,0). \nonumber 
\end{align}
Let us follow the strategy of $I_{\ell,\ell}$ and that of Lemma 5.7 of \cite{bcy} by using the zero-free region of $L(f \otimes f,s)$, see \cite{iwanieckowalski}. Since $L(f \otimes f,s)$ does not vanish, we replace the double integrals of $\real(u) = \real(v) = \delta$ by the contour of integration $\gamma$ on page~\pageref{eq:curve_gamma}. We get by the Cauchy residue theorem 
\begin{align}
  &{\left( {\frac{1}{{2\pi i}}} \right)^2}\int_{(\delta )} {\int_{(\delta )} {{r_{\ell ,\ell  + 1}}(\alpha ,\beta ,i,j,s,u)dsdu} }  \nonumber \\
   &= \mathop {\operatorname{Res} }\limits_{s = 0} \frac{1}{{2\pi i}}\int_{\operatorname{Re} (u) = \delta } {{r_{\ell ,\ell  + 1}}(\alpha ,\beta ,i,j,s,u)du}  + {\left( {\frac{1}{{2\pi i}}} \right)^2}\int_{s \in \gamma } {\int_{\operatorname{Re} (u) = \delta } {{r_{\ell ,\ell  + 1}}(\alpha ,\beta ,i,j,s,u)dsdu} }  \nonumber \\
   &= \mathop {\operatorname{Res} }\limits_{s = u = 0} {r_{\ell ,\ell  + 1}}(\alpha ,\beta ,i,j,s,u) + \mathop {\operatorname{Res} }\limits_{s = 0} \frac{1}{{2\pi i}}\int_{u \in \gamma } {{r_{\ell ,\ell  + 1}}(\alpha ,\beta ,i,j,s,u)du}  \nonumber \\
   &\quad + \mathop {\operatorname{Res} }\limits_{u = 0} \frac{1}{{2\pi i}}\int_{s \in \gamma } {{r_{\ell ,\ell  + 1}}(\alpha ,\beta ,i,j,s,u)ds}  + {\left( {\frac{1}{{2\pi i}}} \right)^2}\int_{s \in \gamma } \int_{u \in \gamma } {r_{\ell ,\ell  + 1}}(\alpha ,\beta ,i,j,s,u)dsdu .  \nonumber  
\end{align}
Again, the first estimation will be that of ${\operatorname{Res} _{s = 0}}\frac{1}{{2\pi i}}\int_{s \in \gamma } {{r_{\ell ,\ell  + 1}}(\alpha ,\beta ,i,j,s,u)ds}$. We start by writing the residue as a contour integral over a small circle of radius $1/L$ centered at $0$, i.e.
\begin{align}
  &\mathop {\operatorname{Res} }\limits_{s = 0} \frac{1}{{2\pi i}}\int_{u \in \gamma } {r_{\ell ,\ell  + 1}}(\alpha ,\beta ,i,j,s,u)du  \nonumber \\
  & = {\left( {\frac{1}{{2\pi i}}} \right)^2}\int_{u \in \gamma } {{{\left( {\frac{{{M_{\ell  + 1}}}}{n}} \right)}^u}\frac{{{L^\ell }(f \otimes f,1 + \alpha  + u){L^\ell }(f \otimes f,1 - \beta  + u)}}{{{L^{\ell (\ell  + 1)}}(f \otimes f, 1 + 2u)}}}  \nonumber \\
  &\quad \times \oint_{D(0,{L^{ - 1}})} {{{\left( {\frac{{{M_\ell }}}{n}} \right)}^s}\frac{{{{\{ {\zeta ^{(N)}}(2(1 + s + u))\} }^{2{\ell ^2}}}{B_{\alpha ,\beta }}(s,u,0)}}{{{L^{\ell (\ell  - 1)}}(f \otimes f, 1 + 2s){L^\ell }(f \otimes f,1 + \alpha  + s){L^\ell }(f \otimes f,1 - \beta  + s)}}\frac{{ds}}{{{s^{i + 1}}}}} \frac{{du}}{{{u^{j + 1}}}}. \nonumber  
\end{align}
Next we use the fact that $\zeta^{(N)}(2(1+s+u))B_{\alpha,\beta}(s,u,0) \ll 1$ in this contour of integration and
\begin{align}
  \frac{1}{{{L^{\ell (\ell  - 1)}}(f \otimes f, 1 + 2s){L^\ell }(f \otimes f,1 + \alpha  + s){L^\ell }(f \otimes f,1 - \beta  + s)}}  &\ll {(2s)^{\ell (\ell  - 1)}}{(\alpha  + s)^\ell }{( - \beta  + s)^\ell } \nonumber \\
	&\ll L^{ - \ell (\ell  + 1)}, \nonumber 
\end{align}
since $s \asymp 1/L$, to write
\begin{align}
  &\mathop {\operatorname{Res} }\limits_{s = 0} \frac{1}{{2\pi i}}\int_{u \in \gamma } {{r_{\ell ,\ell  + 1}}(\alpha ,\beta ,i,j,s,u)du}  \nonumber \\
  &\quad \ll {L^{i - \ell (\ell  + 1)}}\int_{u \in \gamma } {{{\left( {\frac{{{M_{\ell  + 1}}}}{n}} \right)}^{\operatorname{Re} (u)}}\left| {\frac{{{L^\ell }(f \otimes f,1 + \alpha  + u){L^\ell }(f \otimes f,1 - \beta  + u)}}{{{L^{\ell (\ell  + 1)}}(f \otimes f, 1 + 2u)}}} \right|\frac{{du}}{{|u{|^{j + 1}}}}}.  \nonumber 
\end{align}
The novelty is that in addition to the bound \eqref{boundsLfunctions1}, we shall also use \cite[Chapter 5]{iwanieckowalski}
\begin{align} \label{boundsLfunctions2}
L(f \otimes f,\sigma  + i\tau ) \ll _{N,\varepsilon }|\tau {|^{4(\alpha'  + \varepsilon )}},\quad \alpha'  = \max \{ \tfrac{1}{2}(1 - \sigma ),0\},
\end{align}
for all $\varepsilon >0$ and where $N$ is the level of the $L$-function. 
This enables us to obtain
\begin{align}
  \int_{u \in \gamma } &{{\left( {\frac{{{M_{\ell  + 1}}}}{n}} \right)}^{\operatorname{Re} (u)}} \left| {\frac{{{L^\ell }(f \otimes f,1 + \alpha  + u){L^\ell }(f \otimes f,1 - \beta  + u)}}{{{L^{\ell (\ell  + 1)}}(f \otimes f,1 + 2u)}}} \right|\frac{{du}}{{|u{|^{j + 1}}}}  \nonumber \\
   &\ll 
   \int_{|\tau | \ge Y} {\frac{{{{\log }^{\ell (\ell  + 1)}}|\tau |}}{{|\tau {|^{j + 1-\varepsilon}}}}d\tau }  
   + 
   {(\log Y)^{\ell (\ell  + 1)}}\int_{ - c/\log Y}^0 {\frac{{d\sigma }}{{|\sigma  + iY{|^{j + 1 - 4c\ell /\log Y-\varepsilon}}}}}  \nonumber \\
   &\quad + {\left( {\frac{{{M_{\ell  + 1}}}}{n}} \right)^{ - c/\log Y}}{(\log Y)^{\ell (\ell  + 1)}} \nonumber \\
	 &\quad \times \bigg( \int_{ c/\log Y \le |\tau | \le Y}   
   + \int_{0 \le |\tau | \le c/\log Y}   \bigg)\frac{{d\tau }}{{|\tau  - ic/\log Y{|^{j + 1 - 4c\ell /\log Y-\varepsilon}}}} \nonumber \\
   &\ll \frac{{{{(\log Y)}^{\ell (\ell  + 1)}}}}{{{Y^j}}} + {(\log Y)^{j  +\ell(\ell+1)+\varepsilon }}{\left( {\frac{{{M_{\ell  + 1}}}}{n}} \right)^{ - c/\log Y}}. \nonumber 
\end{align}
This means that
\[\mathop {\operatorname{Res} }\limits_{s = 0} \frac{1}{{2\pi i}}\int_{u \in \gamma } {{r_{\ell ,\ell  + 1}}(\alpha ,\beta ,i,j,s,u)du}  
\ll 
{L^{i - \ell (\ell  + 1)}}{(\log Y)^{\ell (\ell  + 1)+\varepsilon}}\bigg( {\frac{1}{{{Y^j}}} + {{(\log Y)}^j}{{\left( {\frac{{{M_{\ell  + 1}}}}{n}} \right)}^{ - c/\log Y}}} \bigg).\]
By using a similar technique, one gets
\[
\mathop {\operatorname{Res} }\limits_{u = 0} \frac{1}{{2\pi i}}\int_{s \in \gamma } {{r_{\ell ,\ell  + 1}}(\alpha ,\beta ,i,j,s,u)ds}  \ll {L^{j - \ell (\ell  - 1) }}{(\log Y)^{\ell (\ell  + 1)+\varepsilon}}\bigg( {\frac{1}{{{Y^i}}} + {{(\log Y)}^i}{{\left( {\frac{{{M_{\ell }}}}{n}} \right)}^{ - c/\log Y}}} \bigg).
\]
Now we bound the double integral over $s \in \gamma$ and $u \in \gamma$, i.e.
\begin{align}
  {\left( {\frac{1}{{2\pi i}}} \right)^2}&\int_{s \in \gamma } {\int_{u \in \gamma } {{r_{\ell ,\ell  + 1}}(\alpha ,\beta ,i,j,s,u)dsdu} }  \nonumber \\
  &\ll 
  \int_{s \in \gamma } {{{\left( {\frac{{{M_\ell }}}{n}} \right)}^{\operatorname{Re} (s)}}\frac{1}{{{L^{\ell (\ell  - 1)}}(f \otimes f, 1 + 2s){L^\ell }(f \otimes f,1 + \alpha  + s){L^\ell }(f \otimes f,1 - \beta  + s)}}\frac{{ds}}{{{s^{i + 1}}}}}  \nonumber \\
  &\quad \times \int_{u \in \gamma } {{{\left( {\frac{{{M_{\ell  + 1}}}}{n}} \right)}^{\operatorname{Re}(u)}}\frac{{{L^\ell }(f \otimes f,1 + \alpha  + u){L^\ell }(f \otimes f,1 - \beta  + u)}}{{{L^{\ell (\ell  + 1)}}(f \otimes f, 1 + 2u)}}\frac{{du}}{{{u^{j + 1}}}}}  \nonumber \\
  & \ll 
  \bigg( {\frac{{{{\log }^{\ell (\ell  + 1)}}Y}}{{{Y^j}}} + {{(\log Y)}^{j+\ell(\ell+1)}}{{\left( {\frac{{{M_{\ell  + 1}}}}{n}} \right)}^{ - c/\log Y}}} \bigg) \nonumber \\
	&\quad \times \bigg( {\frac{{{{\log }^{\ell (\ell  + 1)}}Y}}{{{Y^i}}} + {{(\log Y)}^{i+\ell(\ell+1)}}{{\left( {\frac{{{M_\ell }}}{n}} \right)}^{ - c/\log Y}}} \bigg) \nonumber \\
  & \ll 
  {(\log Y)^{2\ell (\ell  + 1)}} \bigg( {\frac{1}{{{Y^{i + j}}}} + {{(\log Y)}^{i + j}}{{\left( {\frac{{{M_\ell }}}{n}} \right)}^{ - c/\log Y}}} \bigg), \nonumber 
\end{align}
since $\max(M_{\ell},M_{\ell+1})=M_{\ell}$. Let us recall that
\[\Omega (q) = \sum\limits_{n \le {M_{\ell+1}}} {\frac{{{{(\lambda _f^2(n))}^{ * 2{\ell ^2}}}}}{n} \bigg( {\frac{1}{{{Y^q}}} + {{(\log Y)}^q}{{\left( {\frac{{{M_{\ell  }}}}{n}} \right)}^{ - c/\log Y}}} \bigg)} .\]
We use Lemma~\ref{eulermaclaurinlemma} regarding the convolution $\lambda_f^2(n)^{*k}$, to write
\begin{align}
  \Omega (q) &= \sum\limits_{n \le {M_{\ell  + 1}}} {\frac{{{{(\lambda _f^2(n))}^{ * 2{\ell ^2}}}}}{n} \bigg( {\frac{1}{{{Y^q}}} + {{(\log Y)}^q}{{\left( {\frac{{{M_\ell }}}{n}} \right)}^{ - c/\log Y}}} \bigg)}  \nonumber \\
   &\ll 
   \frac{1}{Y^q} \sum\limits_{n \le M_{\ell  + 1}} \frac{(\lambda _f^2(n))^{ * 2\ell ^2}}{n}     
   + 
   (\log Y)^q \sum\limits_{n \le M_{\ell  + 1}} \frac{(\lambda _f^2(n))^{ * 2\ell ^2}}{n}
   \left(\frac{M_\ell}{n}  \right)^{ - c/\log Y}     
   \nonumber \\
   &\ll 
   \frac{\log^{2{\ell ^2}}(M_{\ell  + 1})}{Y^q}  
   +  
   (\log Y)^q 
   (\log M_{\ell  + 1})^{ 2\ell ^2}.\nonumber  
\end{align}
The choice of $Y$ has to be such that $Y=o(T)$, specifically we take $Y=\log T =L$. 
Thus we get $\Omega (q)\ll L^{2\ell^2+\epsilon}$.
Using that $i\ge (\ell+1)^2-(\ell+1)+1$ and $j\ge \ell^2-\ell+1$ and putting all pieces together, we obtain
\begin{align}
  K_{\ell ,\ell  + 1}(\alpha ,\beta ,i,j) &= \sum\limits_{n \le {M_{\ell  + 1}}} {\frac{{{{(\lambda _f^2(n))}^{ * 2{\ell ^2}}}}}{n}\mathop {\operatorname{Res} }\limits_{s = u = 0} {r_{\ell ,\ell  + 1}}(\alpha ,\beta ,i,j,s,u)}  \nonumber \\
   &\quad + O({L^{i - \ell (\ell  + 1)}}\Omega (j)\log Y + {L^{j -\ell (\ell  - 1)}}\Omega (i)\log Y + \Omega (i + j){\log ^2}Y) \nonumber \\
   &= \sum\limits_{n \le {M_{\ell  + 1}}} {\frac{{{{(\lambda _f^2(n))}^{ * 2{\ell ^2}}}}}{n}\mathop {\operatorname{Res} }\limits_{s = u = 0} {r_{\ell ,\ell  + 1}}(\alpha ,\beta ,i,j,s,u)}  + O( L^{i+j-1} )  . \nonumber 
\end{align}
In order to get the main term of the lemma we need to compute the residue at $s=u=0$ of $r_{\ell,\ell+1}$. 
This is accomplished by expressing the residue as two contour integrals over small circles of radii $1/L$ centered at $0$. In other words,
\begin{align}
  \mathop {\operatorname{Res} }\limits_{s = u = 0} {r_{\ell ,\ell  + 1}}(\alpha ,\beta ,i,j,s,u) &= {\left( {\frac{1}{{2\pi i}}} \right)^2}\oint_{D(0,{L^{ - 1}})} {\oint_{D(0,{L^{ - 1}})} {{{\left( {\frac{{{M_\ell }}}{n}} \right)}^s}{{\left( {\frac{{{M_{\ell  + 1}}}}{n}} \right)}^u}} }  \nonumber \\
   &\quad \times \frac{{{{\{ {\zeta ^{(N)}}(2(1 + s + u))\} }^{2{\ell ^2}}}}}{{{L^{\ell (\ell  + 1)}}(f \otimes f,1 + 2u){L^\ell }{{(f \otimes f,1 + \alpha  + s)}^\ell }(f \otimes f,1 - \beta  + s)}} \nonumber \\
   &\quad \times \frac{{{L^\ell }{{(f \otimes f,1 + \alpha  + u)}^\ell }(f \otimes f,1 - \beta  + u)}}{{{L^{\ell (\ell  + 1)}}(f \otimes f,1 + 2u)}}{B_{\alpha ,\beta }}(s,u,0)\frac{{ds}}{{{s^{i + 1}}}}\frac{{du}}{{{u^{j + 1}}}}. \nonumber 
\end{align}
Now we must separate the complex variables $s$ and $u$ to decouple these two integrals. To do this, we recall that $s \asymp u \asymp 1/L$ and hence
\begin{align}
  {\zeta ^{(N)}}{(2(1 + s + u))^2} &= {\zeta ^{(N)}}{(2)^2} + O(1/L) \nonumber \\
  {B_{\alpha ,\beta }}(s,u,0) &= {B_{0,0}}(0,0,0) + O(1/L) \nonumber \\
  \frac{1}{{L(f \otimes f,1 + \alpha  + s)}} &= \frac{{\alpha  + s}}{{{{\operatorname{Res} }_{s = 1}}L(f \otimes f,s)}}(1 + O(1/L)) \nonumber \\
  L(f \otimes f,1 + \alpha  + u) &= \frac{{{{\operatorname{Res} }_{u = 1}}L(f \otimes f,u)}}{{\alpha  + u}}(1 + O(1/L)), \nonumber  
\end{align}
and we recall that we had shown that $B_{0,0}(0,0,0)=1$. Thus
\begin{align}
  &\frac{{{{\{ {\zeta ^{(N)}}(2(1 + s + u))\} }^{2{\ell ^2}}}{L^\ell }(f \otimes f,1 + \alpha  + u){L^\ell }(f \otimes f,1 - \beta  + u){B_{\alpha ,\beta }}(s,u,0)}}{{{L^{\ell (\ell  - 1)}}(f \otimes f,1 + 2s){L^{\ell (\ell  + 1)}}(f \otimes f,1 + 2u){L^\ell }(f \otimes f,1 + \alpha  + s){L^\ell }(f \otimes f,1 - \beta  + s)}} \nonumber \\
  & \quad = \frac{{{{\{ {\zeta ^{(N)}}(2)\} }^{2{\ell ^2}}}}}{{({{\operatorname{Res} }_{s = 1}}{{L(f \otimes f,s))}^{2{\ell ^2}}}}}\frac{{{{(\alpha  + s)}^\ell }{{( - \beta  + s)}^\ell }}}{{{{(\alpha  + u)}^\ell }{{( - \beta  + u)}^\ell }}}{(2s)^{\ell (\ell  - 1)}}{(2u)^{\ell (\ell  + 1)}} + O(L^{-2\ell^2-1}). \nonumber  
\end{align}
Indeed, we now get the product of two cleanly separated integrals
\begin{align}
  \mathop {\operatorname{Res} }\limits_{s = u = 0} {r_{\ell ,\ell  + 1}}(\alpha ,\beta ,i,j,s,u) &= \frac{{{{\{ {\zeta ^{(N)}}(2)\} }^{2{\ell ^2}}}}}{{({{\operatorname{Res} }_{s = 1}}{{L(f \otimes f,s))}^{2{\ell ^2}}}}} \nonumber \\
   &\quad \times \bigg( {\frac{{{2^{\ell (\ell  - 1)}}}}{{2\pi i}}\oint_{D(0,{L^{ - 1}})} {{{\left( {\frac{{{M_\ell }}}{n}} \right)}^s}{{(\alpha  + s)}^\ell }{{( - \beta  + s)}^\ell }\frac{{ds}}{{{s^{i + 1 - \ell (\ell  - 1)}}}}} } \bigg) \nonumber \\
   &\quad \times \bigg( {\frac{{{2^{\ell (\ell  + 1)}}}}{{2\pi i}}\oint_{D(0,{L^{ - 1}})} {{{\left( {\frac{{{M_{\ell  + 1}}}}{n}} \right)}^u}\frac{1}{{{{(\alpha  + u)}^\ell }{{( - \beta  + u)}^\ell }}}\frac{{du}}{{{u^{j + 1 - \ell (\ell  + 1)}}}}} } \bigg) \nonumber \\
	 &\quad + O(L^{i+j-1}). \nonumber  
\end{align}
We shall compute these integrals by the use of Cauchy's integral theorem. We will proceed in more generality than strictly needed. First, we have
\begin{align}
  \frac{1}{{2\pi i}}\oint_{D(0,{L^{ - 1}})} {{{\left( {\frac{{{M_\ell }}}{n}} \right)}^s}{{(\alpha  + s)}^p}{{( - \beta  + s)}^q}\frac{{ds}}{{{s^{k + 1}}}}} &= \frac{{{d^{p + q}}}}{{d{x^p}d{y^q}}}\bigg( {\frac{{{e^{\alpha x - \beta y}}}}{{2\pi i}}\oint_{D(0,{L^{ - 1}})} {{{\left( {\frac{{{M_\ell }}}{n}{e^{x + y}}} \right)}^s}\frac{{ds}}{{{s^{k + 1}}}}} } \bigg)\bigg|_{x = y = 0} \nonumber \\
   &= \frac{1}{{k!}}\frac{{{d^{p + q}}}}{{d{x^p}d{y^q}}}\bigg( {{e^{\alpha x - \beta y}}{{\left( {x + y + \log \frac{{{M_\ell }}}{n}} \right)}^k}} \bigg)\bigg|_{x = y = 0}. \nonumber  
\end{align}
Our case of interest naturally follows by taking $p=q=\ell$ and $k=i-\ell(\ell-1)$. For the $u$-integral we proceed in a slightly different way. We will use the equality
\begin{align} \label{trick1}
\int_{1/q}^1 {{r^{\alpha  + u - 1}}{{\log }^\tau }rdr}  = \frac{{{{( - 1)}^\tau }\tau !}}{{{{(\alpha  + u)}^{\tau  + 1}}}} - \frac{{{q^{ - \alpha  - u}}}}{{{{(\alpha  + u)}^{\tau  + 1}}}}P(u,\alpha ,\log q),
\end{align}
which is valid for all complex numbers $\alpha, u$, positive $q$ and $\tau = 0,1,2,\cdots$. Here $P$ is a polynomial in $\log q$ of degree $\tau-1$. We temporarily set $q= M_{\ell+1}/n$ so that
\begin{align}
  \frac{1}{{2\pi i}}\oint_{D(0,{L^{ - 1}})} &{q^u}\frac{1}{{{{(\alpha  + u)}^m}{{( - \beta  + u)}^n}}}\frac{{du}}{{{u^{k + 1}}}}  \nonumber \\
   &= \frac{{{{( - 1)}^{m - 1}}}}{{(m - 1)!}}\frac{1}{{2\pi i}}\oint_{D(0,{L^{ - 1}})} {{q^u}\frac{1}{{{{( - \beta  + u)}^n}}}\int_{1/q}^1 {{r^{\alpha  + u - 1}}{{\log }^{m - 1}}rdr} \frac{{du}}{{{u^{k + 1}}}}}  + E(q), \nonumber  
\end{align}
where $E(q)$ is the term arising from the second part of \eqref{trick1}, i.e. 
\begin{align} \label{vanishingintegral}
E(q) =  - \frac{{{{( - 1)}^{m - 1}}}}{{(m - 1)!}}{q^{ - \alpha }}\frac{1}{{2\pi i}}\oint {\frac{{P(u,\alpha ,\log q)}}{{{{(\alpha  + u)}^m}{{( - \beta  + u)}^n}}}} \frac{{du}}{{{u^{k+1}}}}.
\end{align}
It can be seen, by taking the contour to be arbitrarily large, that this term vanishes. Reversing the order of integration in the main term yields
\begin{align}
  \frac{1}{{2\pi i}}\oint_{D(0,{L^{ - 1}})} &{q^u}\frac{1}{{{{(\alpha  + u)}^m}{{( - \beta  + u)}^n}}}\frac{{du}}{{{u^{k + 1}}}}  \nonumber \\
   &= \frac{{{{( - 1)}^{m - 1}}}}{{(m - 1)!}}\int_{1/q}^1 {r^{\alpha  - 1}}{{\log }^{m - 1}}r\frac{1}{{2\pi i}}\oint_{D(0,{L^{ - 1}})} {{{(qr)}^u}\frac{1}{{{{( - \beta  + u)}^n}}}\frac{{du}}{{{u^{k + 1}}}}} dr . \nonumber 
\end{align}
Applying again \eqref{trick1} but with the lower boundary of integration at $1/(qr)$ and seeing that the second term of \eqref{trick1} will also vanish by the same reason as the previous second term \eqref{vanishingintegral}, we then obtain
\begin{align}
  \frac{{{{( - 1)}^{m - 1}}}}{{(m - 1)!}} &\int_{1/q}^1 {{r^{\alpha  - 1}}\frac{1}{{2\pi i}}\oint_{D(0,{L^{ - 1}})} {{{(qr)}^u}\frac{1}{{{{( - \beta  + u)}^m}}}\frac{{du}}{{{u^{j - k + 1}}}}} {{\log }^{m - 1}}rdr}  \nonumber \\
   &= \frac{{{{( - 1)}^{m + n}}}}{{(m - 1)!(n - 1)!}}\int_{1/q}^1 {\int_{1/qr}^1 {{r^{\alpha  - 1}}{t^{-\beta  - 1}}{{\log }^{m - 1}}r{{\log }^{n - 1}}t} \frac{1}{{2\pi i}}\oint {{{(qrt)}^u}\frac{{du}}{{{u^{k + 1}}}}} dtdr}  \nonumber \\
   &= \frac{{{{( - 1)}^{m + n}}}}{{k!(m - 1)!(n - 1)!}}\int_{1/q}^1 \int_{1/qr}^1 {{r^{\alpha  - 1}}{t^{-\beta  - 1}}{{\log }^{m - 1}}r{{\log }^{n - 1}}t} {{\left( {\log rt\frac{{{M_2}}}{n}} \right)}^{k}}dtdr.  \nonumber  
\end{align}
The last step is to make the change of variables $r=q^{-a}$ and $t=q^{-b}$ so that the above becomes
\[\frac{{{{\log }^{k + n + m}}q}}{{k!(m - 1)!(n - 1)!}}\iint_{\substack{a + b \le 1 \\ a,b \ge 0}} {{(1 - a - b)}^{k}}{{\left( {\frac{{{M_2}}}{n}} \right)}^{ - a\alpha  + b\beta }}{a^{m - 1}}{b^{n - 1}}dadb.\]
Our case of interest in this setting follows by taking $m=n=\ell$ and $k=j-\ell(\ell+1)$. The end result of this reasoning is that
\begin{align}
  \mathop {\operatorname{Res} }\limits_{s = u = 0} {r_{\ell ,\ell  + 1}}(\alpha ,\beta ,i,j,s,u) &= \frac{{{{\{ {\zeta ^{(N)}}(2)\} }^{2{\ell ^2}}}}}{{({{\operatorname{Res} }_{s = 1}}{{L(f \otimes f,s))}^{2{\ell ^2}}}}} \nonumber \\
   &\quad \times \frac{{{2^{\ell (\ell  - 1)}}}}{{(i-\ell(\ell-1))!}}\frac{{{d^{2\ell }}}}{{d{x^\ell }d{y^\ell }}} \bigg( {{e^{\alpha x - \beta y}}{{\left( {x + y + \log \frac{{{M_\ell }}}{n}} \right)}^{i - \ell (\ell  - 1)}}} \bigg)\bigg|_{x = y = 0} \nonumber \\
   &\quad \times \frac{{{2^{\ell (\ell  + 1)}}{{\log }^{j - \ell (\ell  + 1) + 2\ell }(M_{\ell+1}/n)}}}{{(j - \ell (\ell  + 1))!}} \nonumber \\
	 &\quad \times \iint_{\substack{a + b \le 1 \\ a, b \ge 0}} {{(1 - a - b)}^{^{j - \ell (\ell  + 1)}}}{{\left( {\frac{{{M_{\ell  + 1}}}}{n}} \right)}^{ - a\alpha  + b\beta }}{{(ab)}^{\ell  - 1}}dadb  +  O(L^{i+j-2\ell^2-1}). \nonumber
\end{align}
When we go back to $K_{\ell,\ell+1}$ with this new information we get
\begin{align}
  K_{\ell ,\ell  + 1}(\alpha ,\beta ,i,j,s,u) &= {\left( {\frac{1}{{2\pi i}}} \right)^2}\int_{(\delta )} \int_{(\delta )} {r_{\ell ,\ell  + 1}}(\alpha ,\beta ,i,j,s,u)dsdu \nonumber \\
	 &= \frac{{{{\{ {\zeta ^{(N)}}(2)\} }^{2{\ell ^2}}}}}{{({{\operatorname{Res} }_{s = 1}}{{L(f \otimes f,s))}^{2{\ell ^2}}}}}  \frac{{{2^{\ell (\ell  - 1)}}}}{{(i-\ell(\ell-1))!}}\frac{{{d^{2\ell }}}}{{d{x^\ell }d{y^\ell }}}\frac{{{2^{\ell (\ell  + 1)}}{{\log }^{j - \ell (\ell  + 1) + 2\ell }(M_{\ell+1}/n)}}}{{(j - \ell (\ell  + 1))!}} \nonumber \\
   &\quad \times \sum\limits_{n \le {M_{\ell  + 1}}} {\frac{{{{(\lambda _f^2(n))}^{ * 2{\ell ^2}}}}}{n}} \bigg( {{e^{\alpha x - \beta y}}{{\left( {x + y + \log \frac{{{M_\ell }}}{n}} \right)}^{i - \ell (\ell  - 1)}}} \bigg)\bigg|_{x = y = 0} \nonumber \\
   &\quad \times \iint_{\substack{a + b \le 1 \\ a, b \ge 0}} {{{(1 - a - b)}^{^{j - \ell (\ell  + 1)}}}{{\left( {\frac{{{M_{\ell  + 1}}}}{n}} \right)}^{ - a\alpha  + b\beta }}{{(ab)}^{\ell  - 1}}dadb}  + O(L^{i+j-2\ell^2-1}). \nonumber  
\end{align}
Recall that in the last expression for $I_{\ell,\ell+1}$, we have sums over $i$ and over $j$. The sum over $i$ is
\begin{align}
  &\sum\limits_i {\frac{{{a_{\ell ,i}}}}{{{{\log }^i}{M_\ell }}}\frac{{i!}}{{(i - \ell (\ell  - 1))!}}{{\left( {x + y + \log \frac{{{M_\ell }}}{n}} \right)}^{i - \ell (\ell  - 1)}}}  \nonumber \\
   &= \frac{1}{{{{\log }^{\ell (\ell  - 1)}}{M_\ell }}}\sum\limits_i {{a_{\ell ,i}}i(i - 1) \cdots (i - \ell (\ell  - 1) + 1){{\left( {\frac{{x + y}}{{\log {M_\ell }}} + \frac{{\log ({M_\ell }/n)}}{{\log {M_\ell }}}} \right)}^{i - \ell (\ell  - 1)}}}  \nonumber \\
   &= \frac{1}{{{{\log }^{\ell (\ell  - 1)}}{M_\ell }}}P_\ell ^{(\ell (\ell  - 1))}\left( {\frac{{x + y}}{{\log {M_\ell }}} + \frac{{\log ({M_\ell }/n)}}{{\log {M_\ell }}}} \right), \nonumber 
\end{align}
whereas the sum over $j$ is
\begin{align}
  &\sum\limits_j {\frac{{{a_{\ell  + 1,j}}j!}}{{{{\log }^j}{M_{\ell  + 1}}}}\frac{{{{\log }^{j - \ell (\ell  + 1) + 2\ell }}({M_{\ell  + 1}}/n)}}{{(j - \ell (\ell  - 1))!}}{{(1 - a - b)}^{j - \ell (\ell  + 1)}}}  \nonumber \\
  & = \frac{{{{\log }^{2\ell }}({M_{\ell  + 1}}/n)}}{{{{\log }^{\ell (\ell  + 1)}}{M_{\ell  + 1}}}}\sum\limits_j {{a_{\ell  + 1,j}}j(j - 1) \cdots (j - \ell (\ell  - 1) + 1){{\left( {(1 - a - b)\frac{{\log ({M_{\ell  + 1}}/n)}}{{\log {M_{\ell  + 1}}}}} \right)}^{j - \ell (\ell  + 1)}}}  \nonumber \\
  & = \frac{{{{\log }^{2\ell }}({M_{\ell  + 1}}/n)}}{{{{\log }^{\ell (\ell  + 1)}}{M_{\ell  + 1}}}}P_{\ell  + 1}^{(\ell (\ell  + 1))}\left( {(1 - a - b)\frac{{\log ({M_{\ell  + 1}}/n)}}{{\log {M_{\ell  + 1}}}}} \right). \nonumber  
\end{align}
Plugging these results into $I_{\ell,\ell+1}$ we see that
\begin{align}
  I_{\ell ,\ell  + 1}(\alpha ,\beta ) &= \frac{{{2^{2{\ell ^2}}}\widehat {{w_0}}(0)}}{{{{\log }^{\ell (\ell  - 1)}}{M_\ell }{{\log }^{\ell (\ell  + 1)}}{M_{\ell  + 1}}}}{\bigg( {\frac{{\{ {\zeta ^{(N)}}(2)\} }}{{{{\operatorname{Res} }_{s = 1}}L(f \otimes f,s)}}} \bigg)^{2{\ell ^2}}}\frac{{{d^{2\ell }}}}{{d{x^\ell }d{y^\ell }}} \nonumber \\
   &\quad \times {e^{\alpha x - \beta y}}\sum\limits_{n \le {M_{\ell  + 1}}} {\frac{{{{(\lambda _f^2(n))}^{ * 2{\ell ^2}}}}}{n}{{\log }^{2\ell }}({M_{\ell  + 1}}/n)P_\ell ^{(\ell (\ell  - 1))}\left( {\frac{{x + y}}{{\log {M_\ell }}} + \frac{{\log ({M_\ell }/n)}}{{\log {M_\ell }}}} \right){|_{x = y = 0}}}  \nonumber \\
   &\quad \times \iint_{\substack{a + b \le 1 \\ a, b \ge 0}} {P_{\ell  + 1}^{(\ell (\ell  + 1))}\left( {(1 - a - b)\frac{{\log ({M_{\ell  + 1}}/n)}}{{\log {M_{\ell  + 1}}}}} \right){{\left( {\frac{{{M_{\ell  + 1}}}}{n}} \right)}^{ - a\alpha  + b\beta }}{{(ab)}^{\ell  - 1}}dadb}  + O(T/L). \nonumber
\end{align}
Making the changes $x \to x / \log M_{\ell}$ and $y \to y / \log M_{\ell}$ puts this in the more comfortable form
\begin{align}
  I_{\ell ,\ell  + 1}(\alpha ,\beta ) &= \frac{{{2^{2{\ell ^2}}}{T^{ -2 \beta }}\widehat w(0)}}{{{{\log }^{\ell (\ell  - 1) + 2\ell }}{M_\ell }{{\log }^{\ell (\ell  + 1) - 2\ell }}{M_{\ell  + 1}}}}{\bigg( {\frac{{\{ {\zeta ^{(N)}}(2)\} }}{{{{\operatorname{Res} }_{s = 1}}L(f \otimes f,s)}}} \bigg)^{2{\ell ^2}}}\frac{{{d^{2\ell }}}}{{d{x^\ell }d{y^\ell }}} \nonumber \\
   &\quad \times M_\ell ^{\alpha x - \beta y}\sum\limits_{n \le {M_{\ell  + 1}}} {\frac{{{{(\lambda _f^2(n))}^{ * 2{\ell ^2}}}}}{n}\frac{{{{\log }^{2\ell }}({M_{\ell  + 1}}/n)}}{{{{\log }^{2\ell }}{M_{\ell  + 1}}}}P_\ell ^{(\ell (\ell  - 1))}\left( {x + y + \frac{{\log ({M_\ell }/n)}}{{\log {M_\ell }}}} \right){|_{x = y = 0}}}  \nonumber \\
   &\quad \times \iint_{\substack{a + b \le 1 \\ a, b \ge 0}} {P_{\ell  + 1}^{(\ell (\ell  + 1))}\left( {(1 - a - b)\frac{{\log ({M_{\ell  + 1}}/n)}}{{\log {M_{\ell  + 1}}}}} \right){{\left( {\frac{{{M_{\ell  + 1}}}}{n}} \right)}^{ - a\alpha  + b\beta }}{{(ab)}^{\ell  - 1}}dadb}  + O(T/L). \nonumber  
\end{align}
The sum over $n$ is computed by the use of Lemma \ref{eulermaclaurinlemma} with $k=2\ell^2$, $s=-\alpha a + b \beta$, $x=M_{\ell}$, $z=M_{\ell+1}$, $F(u)=P_{\ell}^{(\ell(\ell-1))}(x+y+u)$, $H(u)=u^{2\ell} P_{\ell+1}^{(\ell(\ell+1))} ((1-a-b)u)$. The result is
\begin{align}
  &\sum\limits_{n \le {M_{\ell  + 1}}} {\frac{{{{(\lambda _f^2(n))}^{ * 2\ell^2}}}}{{{n^{1 - a\alpha  + b\beta }}}}{{\left( {\frac{{\log ({M_{\ell  + 1}}/n)}}{{\log {M_{\ell  + 1}}}}} \right)}^{2\ell }}}  \nonumber \\
   &\quad \times P_\ell ^{(\ell (\ell  - 1))}\left( {x + y + \frac{{\log ({M_\ell }/n)}}{{\log {M_\ell }}}} \right)P_{\ell  + 1}^{(\ell (\ell  + 1))}\left( {(1 - a - b)\frac{{\log ({M_{\ell  + 1}}/n)}}{{\log {M_{\ell  + 1}}}}} \right) \nonumber \\
   & = {\left( {\frac{{{{\operatorname{Res} }_{s = 1}}L(f \otimes f,s)}}{{\{ {\zeta ^{(N)}}(2)\} }}} \right)^{2{\ell ^2}}}\frac{{{{\log }^{2{\ell ^2}}}{M_{\ell  + 1}}}}{{(2{\ell ^2} - 1)!M_{\ell  + 1}^{ - a\alpha  + b\beta }}} \nonumber \\
   &\quad \times \int_0^1 {{{(1 - u)}^{2{\ell ^2} - 1}}P_\ell ^{(\ell (\ell  - 1))}\left( {x + y + 1 - (1 - u)\frac{{\log {M_{\ell  + 1}}}}{{\log {M_\ell }}}} \right)}  \nonumber \\
   &\quad \times {u^{2\ell }}P_{\ell  + 1}^{(\ell (\ell  + 1))}((1 - a - b)u)M_{\ell  + 1}^{u( - a\alpha  + b\beta )}du + O(L^{2\ell^2-1}). \nonumber  
\end{align}
This means that we are left with
\begin{align}
  I_{\ell ,\ell  + 1}(\alpha ,\beta ) &= \frac{{{2^{2{\ell ^2}}}{T^{ - 2\beta }}\widehat w(0)}}{{(2{\ell ^2} - 1)!}}{\left( {\frac{{\log {M_{\ell  + 1}}}}{{\log {M_\ell }}}} \right)^{\ell (\ell  + 1)}}\frac{{{d^{2\ell }}}}{{d{x^\ell }d{y^\ell }}} \nonumber \\
   &\quad \times {{M_\ell ^{\alpha x - \beta y}}}\iint_{\substack{a + b \le 1 \\ a, b \ge 0}} {\int_0^1 {{{(1 - u)}^{2{\ell ^2} - 1}}P_\ell ^{(\ell (\ell  - 1))}\left( {x + y + 1 - (1 - u)\frac{{\log {M_{\ell  + 1}}}}{{\log {M_\ell }}}} \right)} } \nonumber \\
  &\quad \times {(ab)^{\ell  - 1}}P_{\ell  + 1}^{(\ell (\ell  + 1))}((1 - a - b)u)M_{\ell  + 1}^{u( - a\alpha  + b\beta )}{u^{2\ell }}dudadb \bigg|_{x = y = 0} + O(T/L). \nonumber  
\end{align}
Setting $M_{\ell} = T^{\nu_\ell}$ and $M_{\ell+1} = T^{\nu_{\ell+1}}$ we obtain Lemma \ref{cellell1lemma}, i.e.
\begin{align}
  c_{\ell ,\ell  + 1}(\alpha ,\beta ) &= \frac{{{2^{2{\ell ^2}}}}}{{(2{\ell ^2} - 1)!}}{\left( {\frac{{{\nu _{\ell  + 1}}}}{{{\nu _\ell }}}} \right)^{\ell (\ell  + 1)}}\frac{{{d^{2\ell }}}}{{d{x^\ell }d{y^\ell }}} \bigg[ \nonumber \\
   &\quad \times \iint_{\substack{a + b \le 1 \\ a, b \ge 0}} {\int_0^1 {{u^{2\ell }}{{(1 - u)}^{2{\ell ^2} - 1}}{{(M_\ell ^{ - x}M_{\ell  + 1}^{au})}^{ - \alpha }}{{(M_\ell ^yM_{\ell  + 1}^{ - bu}T^2)}^{ - \beta }}} } \nonumber \\
   &\quad \times P_\ell ^{(\ell (\ell  - 1))}\left( {x + y + 1 - (1 - u)\frac{{{\nu _{\ell  + 1}}}}{{{\nu _\ell }}}} \right)P_{\ell  + 1}^{(\ell (\ell  + 1))}((1 - a - b)u){(ab)^{\ell  - 1}}dudadb \bigg]_{x = y = 0}. \nonumber  
\end{align}
Therefore, the main term of Theorem \ref{cellell1theorem} is given by
\begin{align}
  c_{\ell ,\ell  + 1} &= Q\left( {\frac{{ - 1}}{{2\log T}}\frac{d}{{d\alpha }}} \right)Q\left( {\frac{{ - 1}}{{2\log T}}\frac{d}{{d\beta }}} \right){c_{\ell ,\ell  + 1}}(\alpha ,\beta ){|_{\alpha  = \beta  =  - R/L}} \nonumber \\
   &= \frac{{{2^{2{\ell ^2}}}}}{{(2{\ell ^2} - 1)!}}{\left( {\frac{{{\nu _{\ell  + 1}}}}{{{\nu _\ell }}}} \right)^{\ell (\ell  + 1)}}{e^R}\frac{{{d^{2\ell }}}}{{d{x^\ell }d{y^\ell }}} \bigg[  \iint_{\substack{a + b \le 1 \\ a, b \ge 0}} \int_0^1 {{u^{2\ell }}{{(1 - u)}^{2{\ell ^2} - 1}}{e^{R[\frac{\nu _\ell }{2}(y - x) + u\frac{\nu _{\ell  + 1}}{2}(a - b)]}}}  \nonumber \\
   &\quad \times Q\bigg( \frac{- x{\nu _\ell } + au{\nu _{\ell  + 1}}}{2}\bigg)Q\bigg(1 +\frac{ y{\nu _\ell } - bu{\nu _{\ell  + 1}}}{2}\bigg) \nonumber \\
   &\quad \times P_\ell ^{(\ell (\ell  - 1))}\left( {x + y + 1 - (1 - u)\frac{{{\nu _{\ell  + 1}}}}{{{\nu _\ell }}}} \right)P_{\ell  + 1}^{(\ell (\ell  + 1))}((1 - a - b)u){(ab)^{\ell  - 1}}dudadb \bigg]_{x = y = 0}. \nonumber 
\end{align}
This ends the computation of $I_{\ell,\ell+1}$.
\subsection{The mean value integral $I_{\ell,\ell+j}(\alpha, \beta)$ for $j \in \N \backslash \{1\}$}
\noindent We must now examine the case $I_{\ell,\ell+j}(\alpha,\beta)$ where $j = 2,3,4,\cdots$. 
We will show that $I_{\ell,\ell+j}(\alpha,\beta) \ll TL^{-1+\varepsilon}$ and therefore the mean value integral contribution of these terms to $\kappa_f$ is zero. 
As before, we start by inserting
\[
{\psi _\ell }(s) = \chi _f^{\ell-1} (s + \tfrac{1}{2} - {\sigma _0})\sum\limits_{{h_1}{k_1} \le {M_\ell }} {\frac{{{\mu_{f,\ell} }({h_1}){\lambda_f^{* \ell - 1}}({k_1})h_1^{{\sigma _0} - 1/2}k_1^{1/2 - {\sigma _0}}}}{{h_1^sk_1^{1 - s}}}{P_\ell }[{h_1}{k_1}]} ,
\]
and
\[
{\psi _{\ell  + j}}(s) = \chi _f^{\ell-1  + j}(s + \tfrac{1}{2} - {\sigma _0})\sum\limits_{{h_2}{k_2} \le {M_{\ell  + j}}} {\frac{{{\mu _{\ell  + j}}({h_2}){\lambda_f^{* \ell-1+j}}({k_2})h_2^{{\sigma _0} - 1/2}k_2^{1/2 - {\sigma _0}}}}{{h_2^sk_2^{1 - s}}}{P_{\ell  + j}}[{h_2}{k_2}]} ,
\]
into the mean value integral $I$, i.e.
\begin{align}
  I_{\ell ,\ell  + j}(\alpha ,\beta ) &= \int_{ - \infty }^\infty  {w(t)L(f,\tfrac{1}{2} + \alpha  + it)L(f,\tfrac{1}{2} + \beta  - it)\overline {{\psi _\ell }} {\psi _{\ell  + j}}({\sigma _0} + it)dt}  \nonumber \\
   &= \sum\limits_{{h_1}{k_1} \le {M_\ell }} {\sum\limits_{{h_2}{k_2} \le {M_{\ell  + j}}} {\frac{{{\mu_{f,\ell} }({h_1}){\lambda_f^{* \ell - 1}}({k_1})}}{{h_1^{1/2}k_1^{1/2}}}{P_\ell }[{h_1}{k_1}]\frac{{{\mu _{\ell  + j}}({h_2}){\lambda_f^{* \ell-1+j}}({k_2})}}{{h_2^{1/2}k_2^{1/2}}}{P_{\ell  + j}}[{h_2}{k_2}]} } {J_{3,f}}, \nonumber 
\end{align}
where
\begin{align}
  {J_{3,f}} &= \int_{ - \infty }^\infty  {w(t)L(f,\tfrac{1}{2} + \alpha  + it)L(f,\tfrac{1}{2} + \beta  - it){{\left( {\frac{{{k_1}{h_2}}}{{{h_1}{k_2}}}} \right)}^{ - it}}\chi _f^j(\tfrac{1}{2} + it)dt}  \nonumber \\
   &= \int_{ - \infty }^\infty  w(t){{\bigg( {\frac{t\sqrt{N}}{{2\pi }}} \bigg)}^{ - 2\beta }}L(f,\tfrac{1}{2} + \alpha  + it)L(f,\tfrac{1}{2} - \beta  + it){{\left( {\frac{{{k_1}{h_2}}}{{{h_1}{k_2}}}} \right)}^{ - it}}\chi _f^{j-1}(\tfrac{1}{2} + it)dt  + O({T^\varepsilon }), \nonumber 
\end{align}
by the use of the functional equation of $L(f,s)$. 
We remark that so far this is the same procedure as in the $I_{\ell,\ell+1}$ case except for the presence of $\chi_f^{j-1}$ in the integrand, and in this case $j$ is an integer strictly greater than one. 
Inserting the duplication formula for the $\Gamma$ function we obtain
\[
\chi_f(s) = \frac{N^{\frac{s-1}{2}} (2\pi)^{s-1} \Gamma\left( 1-s + \frac{k-1}{2} \right)}{N^{-\frac{s}{2}} (2\pi)^{-s} \Gamma\left( s + \frac{k-1}{2} \right)}.
\]
One can show using Stirling's approximation formula that
\[
 (\chi_f(\tfrac{1}{2}+it))^{j-1} = F^{j-1}\bigg( 1+ \sum_{n=1}^{j} b_n t^{-n} +O(t^{-j-1})\bigg)
 \quad \textnormal{with} \quad F(t) = \bigg( \frac{\sqrt{N} t}{2 \pi e} \bigg)^{-2it},
\]
where $b_n$ are complex numbers depending only on $j$ and $k$, where $k$ was the weight of the cusp form, see $\mathsection$1.1. 
Let us handle the error term first. We have $E(t):= ( \frac{\sqrt{N} t}{2 \pi e} )^{-2it(j-1)}O(t^{-j-1}) = O(t^{-2})$.
By power moment estimates (see e.g. \cite[Corollary 2]{bernard} or \cite{zhang}) we have
\begin{align}
  \int_{ - \infty }^\infty  w(t)&{{\bigg( {\frac{t\sqrt{N}}{{2\pi }}} \bigg)}^{ - 2\beta }}L(f,\tfrac{1}{2} + \alpha  + it)L(f,\tfrac{1}{2} - \beta  + it){{\left( {\frac{{{k_1}{h_2}}}{{{h_1}{k_2}}}} \right)}^{ - it}}{E(t)}dt  \nonumber \\
	& \ll \frac{1}{T^{2}} \int_{T/4}^{2T} |L(f,\tfrac{1}{2} + \alpha  + it)| |L(f,\tfrac{1}{2} - \beta  + it)| dt\nonumber \\
	& \le \frac{1}{T^{2}} \bigg(\int_{T/4}^{2T} |L(f,\tfrac{1}{2} + \alpha  + it)|^2 dt \bigg) ^{1/2} \bigg(\int_{T/4}^{2T} |L(f,\tfrac{1}{2} - \beta  + it)|^2 dt \bigg)^{1/2} \nonumber \\
  & \ll \frac{1}{T^{2}} (T \log T)^{1/2} (T \log T)^{1/2} = \frac{\log T}{T}, \nonumber  
\end{align}
by the Cauchy-Schwarz inequality. We next consider the term 
\[
\int_{ - \infty }^\infty t^{-n} w(t){{\bigg( {\frac{t\sqrt{N}}{{2\pi }}} \bigg)}^{ - 2\beta }}L(f,\tfrac{1}{2} + \alpha  + it)L(f,\tfrac{1}{2} - \beta  + it){{\left( {\frac{{{k_1}{h_2}}}{{{h_1}{k_2}}}} \right)}^{ - it}}{F^{j - 1}}(\tfrac{1}{2} + it)dt .
\]
We now use Lemma~\ref{lemmasigma} together with the definition of $\sigma_{\alpha,-\beta}(f,l)$ to further rewrite this as
\begin{align*} 
&\sum\limits_{l = 1}^\infty  {\frac{{{\sigma _{\alpha , - \beta }}(f,l)}}{{{l^{1/2}}}}{e^{ - l/{T^6}}}
\int_{ - \infty }^\infty  {t^{-n}  w(t){{\bigg( {\frac{t \sqrt{N}}{{2\pi }}} \bigg)}^{ - 2\beta }}{{\left( {\frac{{{k_1}{h_2}l}}{{{h_1}{k_2}}}} \right)}^{ - it}}{F^{j - 1}}(\tfrac{1}{2} + it)dt} } \\
=&\,
\sum\limits_{l = 1}^\infty  \frac{\sigma _{\alpha , - \beta }(f,l)}{l^{1/2}} e^{ - l/T^6}
\int_{ - \infty }^\infty  t^{-n}  w(t){{\bigg( {\frac{t \sqrt{N}}{{2\pi }}} \bigg)}^{ - 2\beta}} {\bigg( {\frac{t \sqrt{N}}{{2\pi e }}} \bigg)}^{ -2(j-1)it } {{\left( {\frac{{{k_1}{h_2}l}}{{{h_1}{k_2}}}} \right)}^{ - it}}{dt} . 
\end{align*}
The key observation comes from noticing that for all $1 \le h_1, k_1 \le M_{\ell}$ and $1 \le h_2, k_2 \le M_{\ell+j}$ as well as for any $l \ge 1$, one has
\begin{align} \label{keyorthogonal}
\bigg( {\frac{t\sqrt{N}}{{2\pi e}}} \bigg)^{2(j - 1)}\frac{{{k_1}{h_2}l}}{{{h_1}{k_2}}} 
\ge 
\frac{{{T^{2(j - 1)}}}}{2^{2(j-1)}}\frac{1}{{ (2\pi e)^{2(j-1)}{M_\ell }{M_{\ell  + j}}}} 
= 
\frac{{{T^{2(j - 1)}}}}{{(4\pi e)^{2(j-1)}{T^{{\nu _\ell } + {\nu _{\ell  + j}}}}}} \ge {T^{{\varepsilon _0}}},
\end{align}
provided that
\begin{align} \label{keyorthogonal2}
\nu_\ell + \nu_{\ell + j} < 2(j - 1) - \varepsilon .
\end{align}
From the conditions of the test function $w$ we have $w^{(r)}(t) \ll (L/T)^r$. Using this, it is straight forward to show that we have for each $r\geq 1$ and $n\geq 0$
\begin{align} \label{keyorthogonal2.1}
 \frac{d^r}{dt^r} \bigg(t^{-n}w(t) \bigg(\frac{t \sqrt{N}}{2\pi } \bigg)^{- 2\beta }\bigg) 
 \ll_{r,j} 1
 \quad \textnormal{for} \quad T/2\leq t\leq 2T.
\end{align}
Hence, picking up from \eqref{keyorthogonal} and \eqref{keyorthogonal2.1} and using integration by parts, we arrive at
\begin{align} \label{keyorthogonal3}
\int_{ - \infty }^\infty \bigg( t^{-n}w(t){{\bigg( {\frac{t\sqrt{N}}{{2\pi }}} \bigg)}^{ - 2\beta }} \bigg) 
 {\bigg( {{{\bigg( {\frac{t\sqrt{N}}{{2\pi e}}} \bigg)}^{2(j - 1)}}\frac{{{k_1}{h_2}l}}{{{h_1}{k_2}}}} \bigg)}^{ - it} dt  \ll _{r,{\varepsilon _0}} \frac{1}{{{T^r}}},
\end{align}
for any fixed integer $r$ and uniformly in $l$. Thus, using \eqref{keyorthogonal3} we can further bound $J_{3,f}$ as
\[
J_{3,f} \ll_{r,{\varepsilon _0}} \frac{1}{{{T^r}}}\sum\limits_{l = 1}^\infty  {\frac{{{\sigma _{\alpha , - \beta }}(f,l)}}{{{l^{1/2}}}}{e^{ - l/{T^6}}}}  
+ O_{\varepsilon ,f}({T^{{-1}+\varepsilon} }) \ll_{\varepsilon ,{\varepsilon _0}}{T^{{-1}+\varepsilon} }.
\]
The last step is to plug this back into $I_{\ell,\ell+j}(\alpha,\beta)$ to see that
\begin{align}
  I_{\ell ,\ell  + j}(\alpha ,\beta ) &\ll_{\varepsilon ,{\varepsilon _0}}
  {T^{{-1}+\varepsilon} }
  \sum\limits_{{h_1}{k_1} \le {M_\ell }} {\sum\limits_{{h_2}{k_2} \le {M_{\ell  + j}}} {\frac{{|{\mu_{f,\ell} }({h_1}){\mu _{\ell  + j}}({h_2}){\lambda_f^{* \ell - 1}}({k_1}){\lambda_f^{* \ell-1+j}}({k_2})|}}{{{{({h_1}{h_2}{k_1}{k_2})}^{1/2}}}}|{P_\ell }[{h_1}{k_1}]{P_{\ell  + j}}[{h_2}{k_2}]|} }  \nonumber \\
  &\ll _{\ell,\varepsilon ,{\varepsilon _0}} {T^{{-1}+2\varepsilon }}
    \sum\limits_{{h_1}{k_1} \le {M_\ell }} \sum\limits_{{h_2}{k_2} \le {M_{\ell  + j}}} (h_1h_2 k_1k_2)^{\bm{\theta}-1/2 + \varepsilon} ,  \nonumber   
\end{align}
by the use of \eqref{criticalbounds}. Recognizing that the sums can be consolidated by employing the divisor function we obtain
\begin{align}
  I_{\ell ,\ell  + j}(\alpha ,\beta ) &\ll _{\ell,\varepsilon ,{\varepsilon _0}} {T^{{-1}+2\varepsilon }}
  \sum\limits_{n \le {M_\ell }} \tau(n) n^{\bm{\theta}-1/2+\varepsilon} \sum\limits_{m \le {M_{\ell  + j}}} \tau(m) m^{\bm{\theta}-1/2+\varepsilon}  
   \nonumber \\
   &\ll _{\ell,\varepsilon ,{\varepsilon _0}} {T^{{-1}+4\varepsilon }}M_\ell ^{\bm{\theta}+1/2 + \varepsilon}M_{\ell  + j}^{\bm{\theta}+1/2 + \varepsilon} = T^{{-1}+6\varepsilon  + ({\nu _\ell } + {\nu _{\ell  + j}})(\bm{\theta}+1/2)}. 
\end{align}
By our assumptions on $\nu _\ell$ and $\nu_{\ell  + j}$, we can choose $\varepsilon$ so small that $I_{\ell ,\ell  + j}(\alpha ,\beta ) \ll T/L$.
This ends the proof of Lemma \ref{cellelljlemma} and Theorem \ref{cellelljtheorem}.
\section{Numerical evidence and situation of simple zeros}
In this section we supply the numerical procedure to obtain Theorem \ref{theoremofkappa} and explain the situation regarding the simple zeros and the $\nu_1, \nu_2 \to 1$ conjecture of Farmer.\\

As shown in \cite[p. 230]{bernard} and \cite[p. 216]{farmer}, 
\[
\frac{1}{T} \int_1^T |V\psi(\sigma_0+it)|^2 dt
\]
produces the constants $c_{1,1}, c_{1,2}$ and $c_{2,2}$ with $2R$ and $\nu_{1}/2, \nu_2/2$. We use \texttt{Mathematica} to numerically evaluate $c_{1,1}(P_{\mathcal{L}},Q,2R,\nu_1/2)$ and $c_{1,2}(P_{\mathcal{L}},Q,2R,\nu_1/2,\nu_2/2)$ with the following parameters:
\begin{align}
\bm{\theta}  = 0, \quad \textnormal{so that} \quad \nu_1 = \nu_2 = \tfrac{1}{4}, \quad \textnormal{as well as} \quad R = 2.82505, \nonumber
\end{align}
as well as polynomials
\begin{align}
Q(x) &= .498939 + 1.53685(1-2x) -2.7925(1-2x)^3 + 2.77524 (1-2x)^5 -1.01853(1-2x)^7, \nonumber \\
P_1(x) &= .921756 x + .150879x^2 - .371912 x^3 + .488862x^4 - .189585 x^5, \nonumber \\
P_2(x) &= -.0000537029 x^3 + .0000752763 x^4  -.000142568 x^5. \nonumber
\end{align}
This leads to $\kappa_f \ge .0693872$.
Note that we have used for this result that the Ramanujan's hypothesis is proven for primitive cusp forms (see \eqref{kimsarnak} and \eqref{ramanujanhypothesis}).
We therefore could use $\bm{\theta}=0$ in the above computation. If we work instead with the weaker $\bm{\theta} =7/64$, obtained by Kim and Sarnak, then we use the following choices of parameters: 
\begin{align}
\bm{\theta}  = \tfrac{7}{64}, \quad \textnormal{so that} \quad \nu_1 = \tfrac{5}{27} \quad \textnormal{and} \quad \nu_2 = \tfrac{25}{149}, \quad \textnormal{as well as} \quad R = 3.21, \nonumber
\end{align}
and for the polynomials we take 
\begin{align}
Q(x) &= .499386 + 1.58992(1-2x) -2.99061(1-2x)^3 + 3.01825(1-2x)^5 -1.11694(1-2x)^7, \nonumber \\
P_1(x) &= .93271 x + .147723 x^2 - .35572 x^3 + .444208 x^4 - .168921 x^5, \nonumber \\
P_2(x) &= -.0000665503 x^3 - .00016405 x^4 + .0000736009 x^5. \nonumber
\end{align}
This gives $\kappa_f \ge .0297607$. We therefore see that the Ramanujan's hypothesis has a significant influence to the result obtained by the methods of this paper.\\

Finally, as described\color[rgb]{0,0,0} in the introduction, we explain what happens if we are in the context of the Riemann zeta-function and adapt our results mutatis mutandis with $\nu_1, \nu_2 \to 1$ following Farmer's conjecture \cite{farmer}. In that case, if we take $R=0.75$ and
\begin{align}
Q(x) &= .521417 + .488276(1-2x) - .0155446(1-2x)^3 + .00683032(1-2x)^5 - .0320679(1-2x)^7, \nonumber \\
P_1(x) &= .702374 x + .00612233 x^2 + .281569 x^3 + .296314 x^4 - .286379 x^5, \nonumber \\
P_2(x) &= .0690439 x^3 - .0187972 x^4 + .0319485 x^5. \nonumber
\end{align}
then we get $\kappa \ge .60563$, where $\kappa$ is the lim inf of $N_0(T)/N(T)$ as $T \to \infty$.\\

As mentioned earlier, from numerical experiments, one would need a size of about $\nu=2/5$ to get a proportion of simple critical zeros (recalling that we use $\nu/2$ in the constants $c$). The following plots will illustrate this phenomenon. If we take only $\mathcal{L}=1$, and for the sake of simplicity $P(x)=x$ and $Q(x)=1-x$ (in fact, having $Q(x)$ be a polynomial of degree one is necessary to obtain simple critical zeros, see \cite{anderson} and \cite{heathbrown} as well as \cite[p. 37]{bcy} and \cite[p. 513]{feng}), then
\begin{align} \label{kappaexample}
  c_{1,1} &= 1 + \frac{1}{\nu _1}\int_0^1 \int_0^1 e^{2Rv} \left( {\frac{d}{{dx}}{e^{R\nu _1 x}}P(x + u)Q(v + \nu _1 x){|_{x = 0}}} \right)^2dudv   \nonumber \\
   &= 1 - \frac{{3 + 6R - 2{R^3}(\nu _1  - 3)\nu_1  + 2{R^4}{\nu _1 ^2} + {R^2}(6 + {\nu _1 ^2}) - {e^{2R}}(3 + {R^2}{\nu _1 ^2})}}{{12{R^3}\nu _1 }}, 
\end{align}
and 
\[
\kappa  \ge 1 - \frac{1}{R}\log c_{1,1} + o(1).
\]
Let us now graph $\widehat{\kappa} = \widehat{\kappa}(R,\nu_1) =$ RHS of \eqref{kappaexample} to see the proportion of simple zeros as a function of $R$ and of $\nu_1$.
\begin{figure}[H]
	\centering
	\includegraphics[scale=0.89]{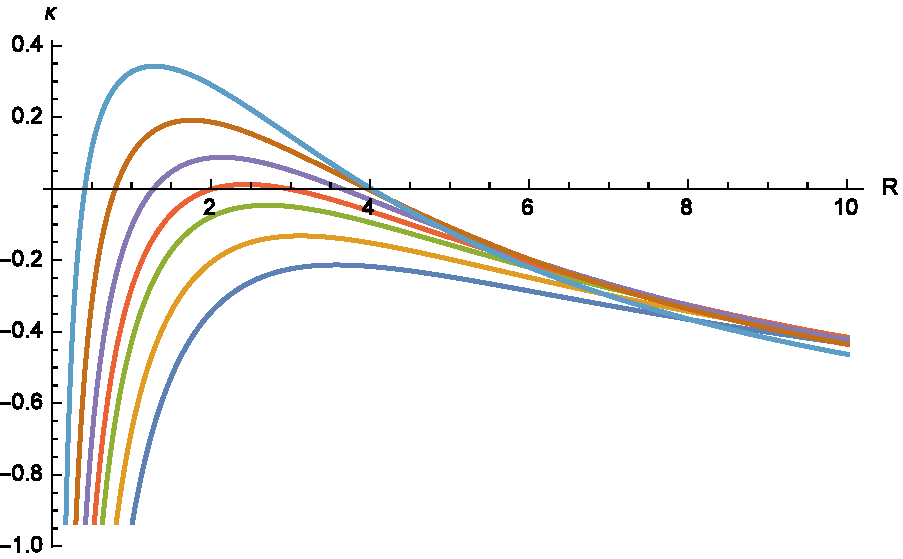}
	\hspace{5mm}
	\includegraphics[scale=0.80]{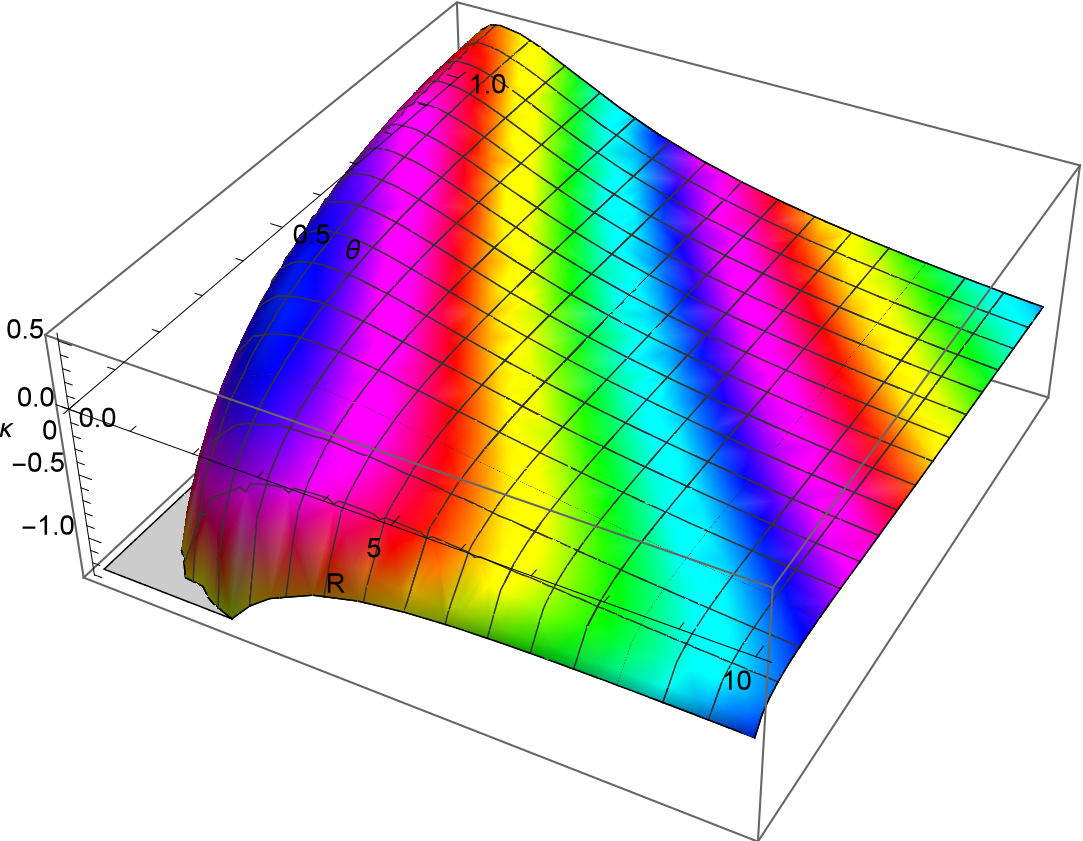}
	\caption{Left-hand side: $\widehat{\kappa}(R,\nu_1)$ for $\nu_1=1/2$ (light blue), $\nu_1=1/3$ (brown), $\nu_1=1/4$ (purple), $\nu_1=1/5$ (red), $\nu_1=1/6$ (green), $\nu_1=1/8$ (orange), $\nu_1=5/54$ (dark blue). Right-hand side: surface plot of $\widehat{\kappa}(R,\nu_1)$ with $\tfrac{1}{10} \le R \le 10$ and $\tfrac{1}{100} \le \theta \le 1$.}
\end{figure}
We note that when $\nu _1 = \frac{1}{2}$, this was considered in \cite{youngsimple} and the optimal value is $R \approx 1.3$. 
\section{Acknowledgments}
The first author wishes to acknowledge partial support from SNF grant PP00P2 138906.\\
The second author wishes to thank Keiju Sono for a cordial correspondence while working on similar results. Sono's results in \cite{sono} for the Riemann zeta-function overlap with our computations and these were produced independently of ours.\\
The authors are extremely grateful to the anonymous referees for their comments and suggestions. Their corrections have removed inaccuracies and greatly increased the clarity of the manuscript.

\end{document}